\documentclass[reqno]{amsart}
\usepackage{macros/preprintStyle}
\pdfoutput=1 %

\newif\iffullversion
\fullversiontrue   

\iffullversion
  \newcommand{\full}[1]{#1}      
  \newcommand{\short}[1]{\ignorespaces}       
\else
  \newcommand{\full}[1]{\ignorespaces}        
  \newcommand{\short}[1]{#1}     
\fi

\usepackage{bm}
\usepackage{comment}
\usepackage{adjustbox} 

\usepackage{algorithmic}

\usepackage{graphicx}
\usepackage{color}
\usepackage{circuitikz}
\usepackage{tikz}
\usetikzlibrary{calc,positioning,shapes}
\usetikzlibrary{patterns,decorations.pathmorphing,decorations.markings}
\usetikzlibrary{external}
\usepackage{pgfplots}
\pgfplotsset{compat=newest}
\usepackage[margin=10pt,font=small,labelfont=bf,labelsep=endash]{caption}
\usepackage{subcaption}

\usetikzlibrary{intersections, backgrounds}
\usetikzlibrary{circuits}
\usetikzlibrary{circuits.ee.IEC}
\usepackage{pgfplots}
\pgfplotsset{compat=newest}

\definecolor{color0}{rgb}{0.12156862745098,0.466666666666667,0.705882352941177}
\definecolor{color1}{rgb}{1,0.498039215686275,0.0549019607843137}
\definecolor{color2}{rgb}{0.172549019607843,0.627450980392157,0.172549019607843}
\definecolor{color3}{rgb}{0.83921568627451,0.152941176470588,0.156862745098039}
\definecolor{color4}{rgb}{0.580392156862745,0.403921568627451,0.741176470588235}
\definecolor{color5}{rgb}{0,0,0}

\definecolor{mycolor1}{rgb}{0.00000,0.44700,0.74100}
\definecolor{mycolor2}{rgb}{0.85000,0.32500,0.09800}
\definecolor{mycolor3}{rgb}{0.92900,0.69400,0.12500}
\definecolor{mycolor4}{rgb}{0.46600,0.67400,0.18800}
\definecolor{mycolor5}{rgb}{0.49400,0.18400,0.55600}

\usepackage{amssymb}

\usepackage{mathrsfs}

\usepackage{bm}
\usepackage{comment}
\usepackage{adjustbox} 

\usepackage{mathtools}

\usepackage{cleveref}

\usepackage{booktabs}
\usepackage{subcaption}

\usepackage{xspace}

\usepackage{graphicx}
\usepackage{color}
\usepackage{circuitikz}
\usepackage{tikz}
\usetikzlibrary{calc,positioning,shapes}
\usetikzlibrary{patterns,decorations.pathmorphing,decorations.markings}
\usetikzlibrary{external}

\usepackage{pgfplots}
\pgfplotsset{compat=newest}
\usepackage[margin=10pt,font=small,labelfont=bf,labelsep=endash]{caption}
\usepackage{subcaption}

\usetikzlibrary{intersections, backgrounds}
\usetikzlibrary{circuits}
\usetikzlibrary{circuits.ee.IEC}
\usepackage{pgfplots}
\pgfplotsset{compat=newest}

\definecolor{color0}{rgb}{0.12156862745098,0.466666666666667,0.705882352941177}
\definecolor{color1}{rgb}{1,0.498039215686275,0.0549019607843137}
\definecolor{color2}{rgb}{0.172549019607843,0.627450980392157,0.172549019607843}
\definecolor{color3}{rgb}{0.83921568627451,0.152941176470588,0.156862745098039}
\definecolor{color4}{rgb}{0.580392156862745,0.403921568627451,0.741176470588235}
\definecolor{color5}{rgb}{0,0,0}

\definecolor{mycolor1}{rgb}{0.00000,0.44700,0.74100}
\definecolor{mycolor2}{rgb}{0.85000,0.32500,0.09800}
\definecolor{mycolor3}{rgb}{0.92900,0.69400,0.12500}
\definecolor{mycolor4}{rgb}{0.46600,0.67400,0.18800}
\definecolor{mycolor5}{rgb}{0.49400,0.18400,0.55600}

\newcommand{\smallO}{\raisebox{0.2ex}{\scalebox{0.7}{$\mathcal{O}$}}}
\newcommand{\bigO}{\cal{O}}

\DeclareMathOperator*{\argmin}{arg\,min}

\newcommand{\bb}[1]{\mathbb{#1}}
\renewcommand{\rm}[1]{\mathrm{#1}}
\newcommand{\cal}[1]{\mathcal{#1}}

\newcommand{\tr}{\mathrm{tr}}

\newcommand{\ran}{\mathrm{ran}}

\newcommand{\rank}{\mathrm{rank}}

\newcommand{\RBspace}{\mathcal{V}}

\newcommand{\RBproj}{\boldsymbol{\Pi}}
\newcommand{\projDiff}{\boldsymbol{\Delta}}

\renewcommand{\d}[1]{\mathrm{d}#1}

\renewcommand{\i}{\mathrm{i}}

\renewcommand{\vec}[1]{\mathbf{#1}}
\newcommand{\gvec}[1]{\boldsymbol{#1}}  

\newcommand{\mat}[1]{\mathbf{#1}}

\newcommand{\prmtr}{\mu}
\newcommand{\prmtrVec}{\boldsymbol{\mu}}

\newcommand{\prmtrSet}{\mathcal{P}}

\newcommand{\EigVal}{\lambda}

\newcommand{\fdim}{N}
\newcommand{\rdim}{r}
\newcommand{\Pdim}{d}
\newcommand{\nrAffine}{Q}
\newcommand{\idxAffine}{q}

\newcommand{\Ord}{n}

\newcommand{\multiidx}{\gvec{\beta}}
\newcommand{\multiidxSec}{\gvec{\nu}}

\title[MOR for pEVPs via Taylor-RBM]{
Model Order Reduction for Parametric Hermitian Eigenvalue Problems: 
Local Acceleration with Taylor-Reduced Basis Method
}
\author{Benjamin Stamm${}^\dagger$  \and Zhuoyao Zeng${}^\dagger$}

\address{${}^{\dagger}$ Institute of Applied Analysis and Numerical Simulation, University of Stuttgart, Pfaffenwaldring 57, 70569 Stuttgart, Germany}
\email{\{benjamin.stamm,zhuoyao.zeng\}@mathematik.uni-stuttgart.de}

\date{\today}

\begin{document}

\begin{abstract}
This paper is concerned with the Taylor-reduced basis method (Taylor-RBM) for the efficient approximation of eigenspaces of large scale parametric Hermitian matrices.
The Taylor-RBM is a local model order reduction method, 
which constructs an approximation space by capturing derivatives information of the spectral projector at a reference point in the parameter domain. 
We perform a concise error analysis to justify the Taylor-RBM for eigenvalue problems,
and we present a computationally efficient procedure to assemble the Taylor-reduced basis space. 
Since this method is tightly connected to the classical multivariate analytic perturbation theory, 
we also provide a detailed analysis of the spectral approximation using the truncated power series of the eigenprojector,
and compare this with the approximation obtained from the Taylor-RBM. 
\end{abstract}

\maketitle
{\footnotesize \textsc{Keywords:} 
Parametric eigenvalue problem, 
spectral approximation, 
model order reduction,
reduced basis method, 
Rayleigh-Ritz method,
perturbation theory.
}

{\footnotesize \textsc{AMS subject classification: } 
Primary 41A63, 65F15; 
secondary 47A55, 65N25, 65F50, 65K10, 81P68.
}


\section{Introduction}\label{sec:intro}

Let $\fdim \in \bb{N}$ with $ \fdim \gg 1$ and $\prmtrSet \subseteq \bb{R}^{\Pdim}$ be an open parameter domain.
Consider a family of $\fdim \times \fdim$ Hermitian matrices $\{\mat{A}(\prmtrVec)\}_{\prmtrVec \in \prmtrSet}$, 
i.e., for each $\prmtrVec \in \prmtrSet$, the matrix $\mat{A}(\prmtrVec) \in \bb{C}^{\fdim \times \fdim}$ satisfies $\mat{A}(\prmtrVec) = \mat{A}(\prmtrVec)^*$.
Throughout this work, 
we assume that the parameter dependency of $ \mat{A}(\prmtrVec)$ on $\prmtrVec$ is \emph{(real-)analytic},
in the sense specified later in \eqref{eq:real_analytic_system}.

For practical relevance and for simplicity, 
this work is presented in the finite dimensional matrix setting.
There are no fundamental obstacles 
when formulating the results to analytic families of unbounded self-adjoint linear operators after clarifications of technical details 
(type of operators, type of spectrum, free from spectral pollution, matrix representations, etc.).

Suppose at a reference parameter $\prmtrVec_0 \in \prmtrSet$, 
we have computed the smallest eigenvalues $\{\EigVal_i(\prmtrVec_0)\}_{i=1}^K$ counting \emph{without} multiplicities, 
i.e., $\EigVal_1(\prmtrVec_0) < \EigVal_2(\prmtrVec_0) < \cdots < \EigVal_K(\prmtrVec_0) < \EigVal_{K+1}(\prmtrVec_0)<\cdots $,
and their corresponding \emph{total eigenprojector} $\{\mat{P}_{\!i}(\prmtrVec_0)\}_{i=1}^K$, 
each with multiplicity $m_i(\prmtrVec_0) \coloneqq \rank \big(\mat{P}_{\!i}(\prmtrVec_0)\big)$,
i.e, 
\begin{equation*}
\ker \big(\mat{A}(\prmtrVec_0) - \EigVal_i(\prmtrVec_0) \mat{I}\big) 
\, =\,  
\ran \mat{P}_{\!i}(\prmtrVec_0) 
\qquad \text{for } i=1,\ldots,K. 
\end{equation*}
Let us denote $\mat{P}(\prmtrVec_0) \coloneqq \sum_{i=1}^K \mat{P}_{\!i}(\prmtrVec_0)$ and $M \coloneqq \rank \big(\mat{P}(\prmtrVec_0) \big)$. 
Note that 
$$
M = m_1(\prmtrVec_0) + \cdots + m_K(\prmtrVec_0) 
\quad \text{and} \quad 
\sum_{i=1}^{K}\EigVal_i(\prmtrVec_0) m_i(\prmtrVec_0) = \tr \big(\mat{A}(\prmtrVec_0) \mat{P}(\prmtrVec_0)\big).
$$
We are interested in approximating the eigenvalue sum and the total eigenprojectors at $\prmtrVec$ in a small neighborhood of $\prmtrVec_0$.
Equivalently by the variational characterization of the sum of the smallest eigenvalues \cite{Fan49},
we are interested in approximating the orthogonal projector $\mat{P}(\prmtrVec)$ and its \emph{Ritz-value sum} $\tr \big(\mat{A}(\prmtrVec) \mat{P}(\prmtrVec)\big)$,
where
\begin{equation*}
\mat{P}(\prmtrVec) \quad \text{ solves } 
\min_{\substack{\mat{M} = \mat{M}^2 = \mat{M}^*\in \bb{C}^{\fdim \times \fdim}  \\ \tr \mat{M} = M}} \tr \big(\mat{A}(\prmtrVec) \, \mat{M}\big) 
\quad \text{for } \prmtrVec \text{ near } \prmtrVec_0.
\end{equation*} 

Since the mapping $\prmtrVec \mapsto \mat{A}(\prmtrVec)$ is assumed to be analytic over $\prmtrSet$,
it is natural to expect that the total eigenprojector $\mat{P}(\prmtrVec)$ also depends analytically on $\prmtrVec$.
Locally, this is indeed the case as established in the classical works \cite{Kato76, Bau84}.
Exploiting this analyticity, 
we shall use the \emph{Taylor reduced basis method} (Taylor-RBM) to approximate the total eigenprojector $\mat{P}(\prmtrVec)$ and the Ritz-value sum $\tr \big(\mat{A}(\prmtrVec) \mat{P}(\prmtrVec)\big)$ for $\prmtrVec$ in a neighborhood of $\prmtrVec_0$.
The main idea is to construct a reduced basis (RB) space $\cal{V}$ with $\rdim \coloneqq \dim \cal{V} \ll \fdim$ and $\rdim \geq M$, 
where $\cal{V}$ is spanned by the ranges of the Taylor coefficients of $\mat{P}(\prmtrVec)$ at $\prmtrVec_0$ up to a certain order.
Then, we perform a Galerkin projection of the original problem onto $\cal{V}$, 
i.e., for a matrix $\mat{V} \in \bb{C}^{\fdim \times r}$ whose columns form an orthonormal basis (ONB) of $\cal{V}$ and $\mat{A}^{\!\!\mat{V}}\!(\prmtrVec) \coloneqq \mat{V}^* \mat{A}(\prmtrVec) \mat{V}$, 
we consider the reduced problem  
\begin{equation} \label{eq:reduced_sol_intro}
\mat{Q}^{\! \mat{V}} \!(\prmtrVec) 
\ \text{ solves}
\argmin_{\substack{\mat{M} = \mat{M}^2 = \mat{M}^* \in \bb{C}^{\rdim \times \rdim}  \\ \tr \mat{M} = M}} \tr \big(\mat{A}^{\!\! \mat{V}}\!(\prmtrVec) \, \mat{M}\big), 
\end{equation}
and we view $\mat{P}(\prmtrVec) \approx \mat{Q}_\RBspace(\prmtrVec) \coloneqq \mat{V} \mat{Q}^{\! \mat{V}} \!(\prmtrVec) \,\mat{V}^*$ for $\prmtrVec_0$ near $\prmtrVec$.


The motivation of our research is twofold. 
For one thing, Taylor-RBM is well-studied in RBM theory for parametric linear source problems in model order reduction (MOR). 
For another, the Taylor-RBM in case of univariate parameter space emerges independently in physics. 
Our goal is to provide a concise mathematical understanding of this method that bridges the two communities and to extend the method to the multivariate parameter case.
We believe this method has great potential for applications involving parametric eigenvalue problems. 
Our focus is on quantum-mechanical applications in computational physics and chemistry, 
although engineering applications are also within its scope. 
This work provides a solid theoretical foundation for its future development.
Before stating our main contributions, 
we summarize related existing works.

\subsection{Literature review and state-of-the-art}

MOR is a vast research area with numerous applications in science and engineering \cite{BTGetal20total,Ant05, SVR08}. 
The general idea of MOR for parametric systems is that given a large parametric full-order model (FOM) that is computationally expensive to solve, 
one construct in the \emph{offline phase} a reliable and efficient reduced order model (ROM) that approximates the FOM well in a certain sense,
and then use the ROM in the \emph{online phase} for rapid repeated evaluations \cite{BOPRU17}.
Among the various MOR techniques, we focus on \emph{projection-based MOR}, 
which can be divided into two categories: 
\emph{System-theoretic} MOR addresses more on control problems, see \cite{BenGW15} for a detailed survey;
\emph{reduced basis method} (RBM) focuses on reducing the dimensionality of the state space through Galerkin-projection \cite{BGTetal20}.

RBM is a class of projection-based MOR techniques, 
proposed originally for solving parametric source problems, 
i.e., parametric partial differential equations (PDEs) \cite{NooP80,PorL87, Rhe93}.
The key idea is to use information of the computed true solutions at selected parameter values, 
also called \emph{snapshot solutions}, 
to construct a low-dimensional RB space, such that the Galerkin projection onto this space provides a good approximation of the solution set. 
Following \cite{Haa17}, 
there are two main ideas of constructing the RB space: 
\begin{itemize}
\item 
\emph{Lagrange-RBM} constructs the RB space by collecting snapshot solutions at selected parameters. 
There are two common strategies: 
The \emph{proper orthogonal decomposition} (POD) refers to computing snapshot solutions of many preselected parameters and then performing a singular value decomposition (SVD) of the resulting large snapshot matrix. 
Alternatively, one can use a \emph{greedy method} to iteratively select the most ``representative'' snapshots from a training set of parameters based on a-posteriori error estimators. 
In both cases, solid mathematical framework and convergence analysis have been established for linear, coercive continuous and affinely decomposable problems \cite{PruRVP02, HesRS16, BMPetal12}.
\item 
\emph{Taylor-RBM} constructs the RB space by collecting the Taylor coefficients of the parametric solution at a given parameter up to a certain order, 
provided that the parametric solution is differentiable w.r.t.\! the parameter. 
Convergence analysis of Taylor-RBM for parametric source problems has been conducted in \cite{Fin83}.  
\end{itemize}

Parametric eigenvalue problems (pEVPs) arises naturally in numerical linear algebra \cite{KreV14, SirK16, KanMMM18, MMMV17, PraB24}, 
in infinite dimensional setting (especially within finite element approximations of parametric PDE EVP) \cite{MacMOPR00, AndS12, AlgBB25, FMPV16, HWD17},
in uncertainty quantification \cite{CJT21, ShiA72, DoeE24, BenR88},
as well as in quantum physics \cite{BHWRS23, HSWR22, FHIetal18, MDFGZ22, DEFK23}.
Compared to parametric source problems, 
pEVPs demonstrate a different nature.
The main challenges include 
(i) the lack of Galerkin orthogonality,
(ii) the absence of a reliable residual based error estimator,
(iii) the complicated regularity behavior of the eigenvalues and eigenvectors w.r.t.\! the parameter, especially when eigenvalue crossings occur. 

The first two challenges are intrinsic problem of subspace methods for eigenvalue problems \cite{Bof10,Gou12, BabO91, Wei74, Par98, KMX13}.
These challenges obstruct direct extensions of the existing RBM theory for parametric source problems to pEVPs.
Nonetheless, certified greedy approximation procedures for the smallest eigenvalue and its corresponding eigenspace have been proposed in \cite{SirK16, MSZ25}. 

In case $\prmtrSet \subseteq \bb{R}$, challenge (iii) is well-addressed by the classical analytical perturbation theory (PT) of linear operators \cite{Rel54, Kato76},
which states that the \emph{individual unordered} eigenvalues and their corresponding eigenvectors are analytic functions of the parameter. 
Exploiting this analyticity, 
the quantum physics community has proposed the \emph{eigenvector continuation} (EC) method,
which constructs an RB space by collecting derivatives of the eigenvectors \cite{FHIetal18, DEFK23}. 
An involved mathematical convergence analysis of this univariate case has been recently provided in \cite{GarS24}.   

The regularity of eigenvalues and eigenvectors in the multivariate parameter case is more complicated,
essentially because \emph{individual} eigenvalues are generally only continuous and the \emph{individual} eigenvectors may even be discontinuous.
This phenomenon is well-illustrated by the classical example of the $2 \times 2$ matrix family
\begin{equation} \label{eq:kato_example}
\begin{bmatrix}
\prmtr_1 & \prmtr_2 \\
\prmtr_2 & -\prmtr_1
\end{bmatrix} 
\quad \text{for} \quad  
(\prmtr_1, \prmtr_2)^{\!\top} \in \bb{R}^2
\end{equation}
from \cite[Exp.~5.12 in Chp.~2]{Kato76},
which has eigenvalues $\pm {(\prmtr_1^2 + \prmtr_2^2)^{1/2}}$ that are continuous but not differentiable at the origin,
and the corresponding eigenvectors \emph{cannot} be defined continuously at the origin.
The classical works \cite{Kato76, Bau84} and recent works \cite{AndS12, GSH23} suggest to consider the total eigenprojector associated to a cluster of eigenvalues instead of individual eigenvalues and eigenvectors,
which remains analytic, 
provided that the cluster is separated from the rest of the spectrum by a non-vanishing gap.
To the best of our knowledge, there has not been any existing work on Taylor-RBM for pEVPs in the multivariate parameter case, 
let alone on its efficient assembly.


\subsection{Contributions}This article provides the following insights: 
\begin{itemize}
\item 
Concerning the subspace approximation procedure for eigenvalues and eigenspaces, 
we propose an a-priori error bound that enables a good interpretation in \Cref{thm:Cea_EVP},
and provide a simple relation between the Ritz-value sum error and the squared eigenprojector error in \Cref{lem:energy_difference_general}. 
\item 
A concise review of the multivariate analytic perturbation theory with explicit formulae is presented in \Cref{sec:analyticity}. 
Besides, we detailedly analyse the approximation qualities of the truncated series of total eigenprojectors in \Cref{lem:closedness_test_space} and \Cref{thm:energy_difference_rb}.
\item 
We introduce the Taylor-RBM for multivariate pEVPs in \Cref{def:taylor_rbm}, 
and conduct a-priori convergence analysis in \Cref{thm:Taylor_rbm_projector_error} as well as \ref{thm:energy_difference_rb}.
We also provide concrete efficient iterative procedure to assemble the Taylor-RBM in \Cref{alg:assembly_taylor_rbm_space} and discuss some pratical implementation aspects.
\end{itemize}

\subsection{Organisation of the manuscript}
\Cref{sec:rayleigh_ritz} reviews the subspace approximation method for (non-parametric) eigenvalue problems, 
and collects results that will be useful later.
In \Cref{sec:analyticity}, 
we revisit the analyticity of total eigenprojectors for multivariate pEVPs and study their finite truncations.
\Cref{sec:Taylor_rbm} introduces the Taylor-RBM for pEVPs, 
provides a convergence analysis and discusses efficiently assembly procedure for the Taylor-RB space.
Numerical experiments are presented in \Cref{sec:num_exp} to illustrate the theoretical findings.
Finally, \Cref{sec:conclusion} concludes the manuscript with some outlooks.

\subsection{General notations}
$\bb{N}$ denotes natural numbers without $0$, and $\bb{N}_0 \coloneqq \{0\} \cup \bb{N}$. 
Bold letters, such as $\prmtrVec$, $\vec{y}$ or $\mat{M}$, indicate quantities that are in general vectors or matrices.
$\mat{I}_n$ denotes the identity matrix in $\bb{C}^{n \times n}$ for any $n\in \bb{N}$,
and for convenience, we denote $\mat{I} \coloneqq \mat{I}_\fdim$.
The $j$-th canonical unit vector in $\bb{R}^\Pdim$, which has $1$ in the $j$-th position and $0$ elsewhere, is denoted by $\vec{e}_j$.

For vectors $\prmtrVec = (\prmtr_1, \ldots, \prmtr_\Pdim) \in \bb{R}^\Pdim$, we denote its Euclidean norm by $|\prmtrVec| \coloneqq \big({\prmtr_1^2 + \cdots + \prmtr_\Pdim^2}\big)^{1/2}$.
For multi-indices $\multiidx = (\beta_1, \ldots, \beta_\Pdim) \in \bb{N}_0^\Pdim$, 
we also denote its length by $|\multiidx| \coloneqq \beta_1 + \cdots + \beta_\Pdim$.
This slight abuse of notation shall not cause confusion from the context.

The spectral norm (or induced Euclidean norm) of a matrix $\mat{M} \in \bb{C}^{\fdim \times \fdim}$ is denoted by $\|\mat{M}\|$,
and the spectrum of $\mat{M}$ is denoted by $\sigma(\mat{M})$.
Given an orthogonal projector $\RBproj = {\RBproj}^2 = {\RBproj}^*$,
we denote by ${\RBproj}^\perp \coloneqq \mat{I} - {\RBproj}$ its orthogonal complement.
We denote by $\mat{M}\!\!\upharpoonright_{\RBproj} : \ran{\RBproj} \to \ran{\RBproj}, \vec{y} \mapsto {\RBproj}\mat{M}\vec{y}$ the restriction of ${\RBproj}\mat{M}$ to $\ran {\RBproj}$, 
which is also known as the \emph{compression} of $\mat{M}$ to $\ran {\RBproj}$.
    
To avoid clumsy brackets in formulae, 
we will mostly neglect brackets when taking traces, ranks, kernels and ranges of matrix sums or products from now on, 
e.g., we mostly write $\tr \, \mat{M}_1 \mat{M}_2$ instead of $\tr (\mat{M}_1 \mat{M}_2)$, or $\rank \, \mat{M}_1 - \mat{M}_2$ instead of $\rank (\mat{M}_1 - \mat{M}_2)$.

\section{Selected aspects of the subspace method for eigenspace approximation}\label{sec:rayleigh_ritz}

Throughout this section, we consider a (non-parametric) Hermitian matrix $\mat{A} \in \bb{C}^{\fdim \times \fdim}$.
Let $\mat{P} \in \bb{C}^{\fdim \times \fdim}$ be an orthogonal projector of rank $M$, 
which commutes with $\mat{A}$. 
In other words, $\mat{P}$ is a sum of spectral projectors asscociated to $\mat{A}$. 
Note that  $\mat{P}$ being a sum of spectral projectors implies that no eigenvalues of $\mat{A}\!\!\upharpoonright_\mat{P}$ and of $\mat{A}\!\!\upharpoonright_{\mat{P}^{\perp}}$ coincide.

Given a subspace $\cal{V} \subseteq \bb{C}^{\fdim}$ with associated orthogonal projector ${\RBproj}$ and $\rdim \coloneqq \dim \cal{V} = \rank {\RBproj} \ll \fdim$ as well as $\rdim \geq M$, 
the \emph{subspace method} (or \emph{Rayleigh--Ritz method}) for spectral approximations 
refers to approximating the eigenvalues and eigenspaces of $\mat{A}$ by those of its compression $\mat{A}\!\!\upharpoonright_{\RBproj}$.
In this context, 
the eigenspaces of $\mat{A}\!\!\upharpoonright_{\RBproj}$ are viewed as subspaces of $\cal{V} \subseteq \bb{C}^\fdim$, 
and so its eigenprojectors can be naturally compared with $\mat{P}$. 
Let $\mat{Q}\in \bb{C}^{\fdim \times \fdim}$ be an orthogonal projector of rank $M$, 
such that $\ran \, \mat{Q} \subseteq \cal{V}$ and $\mat{Q}$ commutes with ${\RBproj} \mat{A} {\RBproj}$, 
i.e., $\ran \mat{Q}$ is an invariant subspace of $\mat{A}\!\!\upharpoonright_{\RBproj}$.
We are interested in how well $\ran \mat{Q}$ approximates $\ran \mat{P}$,
or equivalently, how well $\mat{Q}$ approximates $\mat{P}$.

The following example illustrates some issues of the subspace method in approximating eigenspaces.
\begin{example}
[Pathological case of subspace method for eigenspace approximation]\label{ex:rayleigh_ritz_pathological}
Consider $\mat{A} \coloneqq \rm{diag}(1,2,2,3)$ and $\cal{V} \coloneqq \ran {\RBproj} \coloneqq \rm{span}\{\vec{e}_1 + \vec{e}_4, \vec{e}_2, \vec{e}_3\}$,
where $\vec{e}_j$ denotes the $j$-th standard basis vector.
Then, we have $\mat{A}\!\!\upharpoonright_{\RBproj} = \rm{diag}(2,2,2)$. 
Let us discuss two cases of $\mat{P}$:
\begin{enumerate}
\item \label{ex:pathological_item1}
If $\mat{P} = \vec{e}_1\vec{e}_1^*$, i.e. $M=1$, 
the choice of $\mat{Q}$ is not unique, 
as any 1-dimensional subspace of~$\cal{V}$ is an invariant subspace of $\mat{A}\!\!\upharpoonright_{\RBproj}$, 
and all approximations are bad, 
since $\|{\RBproj}^\perp \mat{P}\| = 2^{-1/2}$. 
\item 
If $\mat{P} = \vec{e}_2\vec{e}_2^* + \vec{e}_3\vec{e}_3^*$, i.e. $M=2$,
the choice of $\mat{Q}$ is also not unique, 
as any 2-dimensional subspace of $\cal{V}$ is an invariant subspace of $\mat{A}\!\!\upharpoonright_{\RBproj}$.
We also see that any choice of $\mat{Q}$ satisfies $\tr \mat{AQ} = \tr \mat{AP} =4$, because the the basis vector $\vec{e}_1 + \vec{e}_4$ of $\cal{V}$ has Ritz-value $2$ and is orthogonal to other basis vectors. 
In other words, the subspace $\cal{V}$ introduces additional "spurious" eigenvalues which coincide with the target eigenvalues. 
However, we observe $\ran \mat{P} \subseteq \cal{V}$, so the best choice of $\mat{Q}$ is certainly $\mat{Q} = \mat{P}$.
\end{enumerate}
\end{example}

For our problem of interest, we shall comment on the case where $\mat{P}$ is characterized variationally.
\begin{remark}[Variational approximation]
From a practical point of view as discussed at the beginning of introduction, 
we are often more interested in the invariant subspace associated to the smallest eigenvalues of $\mat{A}$, 
i.e., $\mat{P}$ and $\mat{Q}$ satisfy 
\begin{align} \label{eq:ky_fan_min_principle_subspace}
\mat{P} 
\,=\,
\argmin_{\substack{ \mat{M} = \mat{M}^2 = \mat{M}^*  \\ \tr \, \mat{M} = M}}
\tr \, \mat{A} \mat{M}
\qquad \text{and} \qquad
\mat{Q} 
\,\in\,
\argmin_{\substack{ \mat{M} = \mat{M}^2 = \mat{M}^*  \\ \tr \, \mat{M} = M \\ \ran \mat{M} \subseteq \ran {\RBproj}}}
\tr \, \mat{A}  \mat{M},
\end{align}
respectively.
Note that the left hand side of \eqref{eq:ky_fan_min_principle_subspace} is uniquely solvable, since $\mat{P}$ is the sum of spectral projectors,
while $\mat{Q}$ may not be, see \Cref{ex:rayleigh_ritz_pathological}~\ref{ex:pathological_item1}.
Also note that the right hand side of \eqref{eq:ky_fan_min_principle_subspace} is an abstract formulation of \eqref{eq:reduced_sol_intro}. 

The variational characterisation of $\mat{P}$ is called the \emph{Ky-Fan's minimum principle} \cite{Fan49}, 
which is the clustered version of the classical min--max principle for (individual) extremal eigenvalues \cite[Thm.~1~in~Sec.~10.2]{BirS87}.
We also refer to \cite[Prp.~1.3.4]{Tao12} for a more modern formulation.

Besides practical relevance, 
this variational characterization has further benefits. 
For one thing, 
it is more natural to think of and to specify $\mat{Q}$ as an approximation of~$\mat{P}$ under this setting.
For another, we automatically have $\mat{Q}=\mat{P}$, 
provided that $\mat{P}$ is variationally characterized and $\ran \mat{P} \subseteq \ran {\RBproj}$. 
This simple observation has further implications for the case when $\mat{P}$ is slightly perturbed, 
and it will be leveraged later in \Cref{sec:Taylor_rbm}.
\end{remark}

We also remark that the analysis of the subspace approximation method
is not restricted to this variational setting. 
Hence, we do not impose \eqref{eq:ky_fan_min_principle_subspace} in this section.


It is natural to measure the approximation quality of $\mat{Q}$ to $\mat{P}$ by $\| \mat{Q} - \mat{P} \|$. 
The following result provides a flexibility in measuring this approximation error under a rank assumption.

\begin{lemma}[Equivalence of subspace approximation error]\label{lem:proj_diff_equiv}
Let $\mat{P}$, $\mat{Q}$ be two orthogonal projectors of the same rank.
Then, it holds that 
\short{$\| \mat{Q} - \mat{P} \| = \| \mat{Q}^\perp \mat{P} \| = \| \mat{P}^{\perp} \mat{Q} \|.$}
\full{\begin{align*}
\| \mat{Q} - \mat{P} \|
\;=\;
\| \mat{Q}^\perp \mat{P} \|
\;=\;
\| \mat{P}^{\perp} \mat{Q} \|.
\end{align*}}
\end{lemma}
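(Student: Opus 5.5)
The plan is to first establish the general identity
\[
\|\mat{Q}-\mat{P}\| = \max\{\|\mat{Q}^\perp\mat{P}\|,\ \|\mat{P}^\perp\mat{Q}\|\},
\]
valid for any two orthogonal projectors. Then I will use the equal-rank assumption to show that the two terms in the maximum coincide.

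\emph{Step 1 (the maximum formula).} Write $\mat{Q}-\mat{P} = \mat{Q}\mat{P}^\perp - \mat{Q}^\perp\mat{P}$, which follows from $\mat{Q}=\mat{Q}\mat{P}+\mat{Q}\mat{P}^\perp$ and $\mat{P}=\mat{Q}\mat{P}+\mat{Q}^\perp\mat{P}$.
\begin{itemize}
\item The first term maps $\ran\mat{P}^\perp$ into $\ran\mat{Q}$, and the second maps $\ran\mat{P}$ into $\ran\mat{Q}^\perp$. The domains are orthogonal and so are the target spaces.
\item Hence, for $\vec{x}=\mat{P}\vec{x}+\mat{P}^\perp\vec{x}$, Pythagoras gives
\[
|(\mat{Q}-\mat{P})\vec{x}|^2 = |\mat{Q}\mat{P}^\perp\vec{x}|^2+|\mat{Q}^\perp\mat{P}\vec{x}|^2 \le \max\{\|\mat{Q}\mat{P}^\perp\|^2,\|\mat{Q}^\perp\mat{P}\|^2\}\,|\vec{x}|^2 .
\]
\item The reverse inequality follows by testing with $\vec{x}\in\ran\mat{P}$ or $\vec{x}\in\ran\mat{P}^\perp$.
\item Finally, $\|\mat{Q}\mat{P}^\perp\| = \|(\mat{Q}\mat{P}^\perp)^*\| = \|\mat{P}^\perp\mat{Q}\|$, which yields the maximum formula.
\end{itemize}

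\emph{Step 2 (equality of the two terms).} This is the main obstacle, and it is exactly where the equal-rank hypothesis enters. Without it the identity fails: take $\mat{P}=0$ and $\mat{Q}\neq 0$.

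I will compare the nonzero singular values of the two operators through the squares
\[
(\mat{Q}^\perp\mat{P})^*(\mat{Q}^\perp\mat{P}) = \mat{P}-\mat{P}\mat{Q}\mat{P},
\qquad
(\mat{P}^\perp\mat{Q})^*(\mat{P}^\perp\mat{Q}) = \mat{Q}-\mat{Q}\mat{P}\mat{Q}.
\]
\begin{itemize}
\item Restricted to $\ran\mat{P}$, the first equals $\mat{I}-\mat{P}\mat{Q}\mat{P}\!\!\upharpoonright_{\mat{P}}$. Restricted to $\ran\mat{Q}$, the second equals $\mat{I}-\mat{Q}\mat{P}\mat{Q}\!\!\upharpoonright_{\mat{Q}}$. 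Both spaces have dimension $M$.
\item The operators $\mat{P}\mat{Q}\mat{P}=(\mat{Q}\mat{P})^*(\mat{Q}\mat{P})$ and $\mat{Q}\mat{P}\mat{Q}=(\mat{Q}\mat{P})(\mat{Q}\mat{P})^*$ share their nonzero eigenvalues with multiplicities, since $\mat{X}^*\mat{X}$ and $\mat{X}\mat{X}^*$ always do.
\item Consequently their compressions to $\ran\mat{P}$ and to $\ran\mat{Q}$ have the same nonzero eigenvalues. Because both spaces have the same dimension $M$, the number of zero eigenvalues also agrees.
\item So the compressions are isospectral on spaces of equal dimension $M$. Hence $\|\mat{I}-\mat{P}\mat{Q}\mat{P}\!\!\upharpoonright_{\mat{P}}\|$ equals the corresponding norm on $\ran\mat{Q}$, which gives $\|\mat{Q}^\perp\mat{P}\|=\|\mat{P}^\perp\mat{Q}\|$.
\end{itemize}

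Combining this with Step 1 proves the lemma.
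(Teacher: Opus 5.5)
Your proof is correct. The paper gives no argument of its own for this lemma; it only cites Kato's operator-theoretic proof and Wedin's principal-angle treatment. Your proposal is essentially a self-contained version of that reasoning.

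Step~1 is the block splitting behind the identity $(\mat{Q}-\mat{P})^2=\mat{P}\mat{Q}^\perp\mat{P}+\mat{P}^\perp\mat{Q}\mat{P}^\perp$, whose two summands live on $\ran\mat{P}$ and $\ran\mat{P}^\perp$ respectively.

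Step~2 is the principal-angle argument written in matrix form. The compressions $\mat{P}\mat{Q}\mat{P}\!\!\upharpoonright_{\mat{P}}$ and $\mat{Q}\mat{P}\mat{Q}\!\!\upharpoonright_{\mat{Q}}$ both have spectrum $\cos^2\theta_1,\dots,\cos^2\theta_M$, the squared cosines of the principal angles between $\ran\mat{P}$ and $\ran\mat{Q}$. Hence $\|\mat{Q}^\perp\mat{P}\|=\|\mat{P}^\perp\mat{Q}\|=\sin\theta_{\max}$.

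Your isospectrality formulation uses the equal-rank hypothesis exactly once, and transparently: to match the zero eigenvalues, i.e.\ the angles equal to $\pi/2$. As a result, the borderline case $\|\mat{Q}-\mat{P}\|=1$, where some nonzero vector of $\ran\mat{P}$ is orthogonal to $\ran\mat{Q}$, needs no separate dimension-counting argument. The principal-angle viewpoint, in turn, gives more information, namely the full singular value structure of $\mat{Q}-\mat{P}$.

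One detail in Step~1 should be made explicit. Before summing, bound $|\mat{Q}\mat{P}^\perp\vec{x}|\le\|\mat{Q}\mat{P}^\perp\|\,|\mat{P}^\perp\vec{x}|$ and $|\mat{Q}^\perp\mat{P}\vec{x}|\le\|\mat{Q}^\perp\mat{P}\|\,|\mat{P}\vec{x}|$, and then use $|\mat{P}\vec{x}|^2+|\mat{P}^\perp\vec{x}|^2=|\vec{x}|^2$. If you bound each term by a multiple of $|\vec{x}|$ directly, you only get the sum of the two squared norms, not their maximum.
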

Note that in general $\mat{Q} - \mat{P} \neq \mat{Q}^\perp \mat{P} \neq \mat{P}^{\perp} \mat{Q}$ even though their norms are equal.

An operator theoretic proof can be found in \cite[Thm.~6.34 in Chp.~1]{Kato76}; 
a more geometric treatment linking to the principal angles between subspaces can be found in {\cite[Sec.~2]{Wed83}}.

Next, we relate the approximation error $\| \mat{Q}^\perp \mat{P} \|$
and the projection error $\| {\RBproj}^\perp \mat{P} \|$.
The following theorem has a similar form to the best approximation property of the Petrov-Galerkin solution in finite element methods \cite[Lem.~3.7]{Bra07},
and is adapted from \cite[Thm.~6]{Ovt06I}. 
Our formulation emphasizes the deviation of the entire subspace being an invariant subspace, 
and avoids individual eigenvectors. 
This is more suitable for the parametric setting, as will be discussed later in \Cref{rem:individual_eigenvalues}.

\begin{theorem}[Bounding eigenspace approximation error by eigenspace projection error]\label{thm:Cea_EVP}
Let us continue with the notation of this section.
Suppose that the eigenvalues of $\mat{A}\!\!\upharpoonright_\mat{P}$ and $\mat{A}\!\!\upharpoonright_{\mat{Q}^\perp  {\RBproj}}$ are separated by a gap 
\short{$\gamma \coloneqq \min \big\{ | \tilde{\lambda} - \hat{\lambda}| : \tilde{\lambda} \in \sigma(\mat{A}\!\!\upharpoonright_\mat{P}), \hat{\lambda} \in \sigma( \mat{A}\!\!\upharpoonright_{\mat{Q}^\perp {\RBproj}} ) \big\} >0.$}
\full{\begin{align*}
\gamma \coloneqq \min \Big\{ | \tilde{\lambda} - \hat{\lambda}| : \tilde{\lambda} \in \sigma(\mat{A}\!\!\upharpoonright_\mat{P}), \hat{\lambda} \in \sigma( \mat{A}\!\!\upharpoonright_{\mat{Q}^\perp {\RBproj}} ) \Big\} >0. 
\end{align*}}
Then, it holds that 
\begin{align*}
\| \mat{Q}^\perp \mat{P}  \|^2
\; \leq \;
\short{ \big(1+ \gamma^{-2}\rm{Res}^2({\mat{A},\RBproj}) \big)}
\full{\Big(1+ \frac{ \rm{Res}^2({\mat{A},\RBproj})}{\gamma^2}\Big) }
\| {\RBproj}^\perp \mat{P} \|^2,
\end{align*}
where $\rm{Res}^2(\mat{A}, \RBproj) \coloneqq \|{\RBproj} \mat{A} {\RBproj}^\perp\|^2$ is the squared residual of the entire subspace $\cal{V} = \ran {\RBproj}$ w.r.t.\! $\mat{A}$.
\end{theorem}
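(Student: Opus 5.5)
We first split the error orthogonally along $\RBproj$. Since $\ran\mat{Q}\subseteq\ran\RBproj$, the projectors $\mat{Q}$ and $\RBproj$ commute, and $\mat{Q}^\perp = \RBproj^\perp + \mat{Q}^\perp\RBproj$ with mutually orthogonal ranges. For any vector $\vec{y}$ this gives
\begin{align*}
\|\mat{Q}^\perp\mat{P}\vec{y}\|^2 = \|\RBproj^\perp\mat{P}\vec{y}\|^2 + \|\mat{Q}^\perp\RBproj\mat{P}\vec{y}\|^2 .
\end{align*}
It therefore suffices to prove the operator bound
\begin{align*}
\|\mat{Q}^\perp\RBproj\mat{P}\| \le \gamma^{-1}\,\mathrm{Res}(\mat{A},\RBproj)\,\|\RBproj^\perp\mat{P}\|,
\end{align*}
and in fact the vectorwise version $\|\mat{Q}^\perp\RBproj\mat{P}\vec{y}\|\le\gamma^{-1}\mathrm{Res}\,\|\RBproj^\perp\mat{P}\vec{y}\|$ would be better. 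With only the operator bound, we would take the supremum over unit $\vec{y}$ of the sum $\|\RBproj^\perp\mat{P}\vec{y}\|^2+\gamma^{-2}\mathrm{Res}^2\|\RBproj^\perp\mat{P}\|^2$, which still yields the claimed inequality.

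For the key bound we use a Sylvester equation. Set $\mat{X}\coloneqq\mat{Q}^\perp\RBproj\mat{P}$. Because $\mat{Q}$ commutes with $\RBproj\mat{A}\RBproj$, so does $\mat{Q}^\perp\RBproj$. We compute
\begin{align*}
\mat{Q}^\perp\RBproj\mat{A}\mat{P} = \mat{Q}^\perp\RBproj\mat{A}\RBproj\mat{P} + \mat{Q}^\perp\RBproj\mat{A}\RBproj^\perp\mat{P}.
\end{align*}
Since $\mat{A}\mat{P}=\mat{P}\mat{A}\mat{P}$, the left side equals $\mat{X}(\mat{P}\mat{A}\mat{P})$. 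The first term on the right equals $(\mat{Q}^\perp\RBproj\mat{A}\RBproj\mat{Q}^\perp)\mat{X}$. Hence
\begin{align*}
\mat{X}\,\mat{A}\!\!\upharpoonright_{\mat{P}} - \mat{A}\!\!\upharpoonright_{\mat{Q}^\perp\RBproj}\,\mat{X} = \mat{Q}^\perp\RBproj\mat{A}\RBproj^\perp\,\RBproj^\perp\mat{P},
\end{align*}
viewed as a map $\ran\mat{P}\to\ran\mat{Q}^\perp\RBproj$. Both restricted operators are Hermitian, and their spectra are separated by $\gamma$. The Sylvester-equation bound of Bhatia--Davis--McIntosh type, in its sharp form for the spectral norm when the spectra are merely separated, has a constant $\pi/2$, which is not what we want. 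So we argue vectorwise instead.

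Diagonalize $\mat{A}\!\!\upharpoonright_{\mat{P}}$. For an eigenvector $\vec{u}$ with eigenvalue $\tilde\lambda$, applying the identity to $\vec{u}$ gives
\begin{align*}
(\tilde\lambda - \mat{A}\!\!\upharpoonright_{\mat{Q}^\perp\RBproj})\mat{X}\vec{u} = \mat{Q}^\perp\RBproj\mat{A}\RBproj^\perp\mat{P}\vec{u},
\end{align*}
so $\|\mat{X}\vec{u}\|\le\gamma^{-1}\mathrm{Res}\,\|\RBproj^\perp\mat{P}\vec{u}\|$, since $\|\mat{Q}^\perp\RBproj\mat{A}\RBproj^\perp\|\le\|\RBproj\mat{A}\RBproj^\perp\|$. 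Combined with the first step, this yields $\|\mat{Q}^\perp\vec{u}\|^2\le(1+\gamma^{-2}\mathrm{Res}^2)\|\RBproj^\perp\vec{u}\|^2$ for every eigenvector $\vec{u}$ of $\mat{A}$ in $\ran\mat{P}$.

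The main obstacle is passing from eigenvectors to the full operator norm. For general $\vec{y}=\sum_i\vec{u}_i$ with different eigenvalues, cross terms appear. We would first try to absorb the pieces as follows. Decompose the Sylvester solution in the eigenbasis: $\mat{X}=\sum_i\mat{X}\mat{P}_i$, where each $\mat{X}\mat{P}_i=(\tilde\lambda_i-\mat{A}\!\!\upharpoonright_{\mat{Q}^\perp\RBproj})^{-1}\mat{R}\mat{P}_i$ with $\mat{R}=\mat{Q}^\perp\RBproj\mat{A}\RBproj^\perp$. Then bound $\|\mat{X}\|$ by a direct resolvent argument. If the spectrum of $\mat{A}\!\!\upharpoonright_{\mat{P}}$ lies entirely on one side of that of $\mat{A}\!\!\upharpoonright_{\mat{Q}^\perp\RBproj}$ (the variational setting), the Davis--Kahan $\sin\Theta$-type argument gives constant $1/\gamma$ directly, via the positivity of the shifted operators. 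For the general interlaced case we would follow Ovtchinnikov's argument in \cite[Thm.~6]{Ovt06I}. That argument works with the maximizing vector of $\|\mat{Q}^\perp\mat{P}\vec{y}\|/\|\RBproj^\perp\mat{P}\vec{y}\|$ and uses a stationarity condition to reduce to an eigenvector-type relation, after which the eigenvector estimate above applies.
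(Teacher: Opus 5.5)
Your orthogonal splitting, the Sylvester equation, the eigenvector-wise estimate and the one-sided (Davis--Kahan) case are all correct. Up to the Sylvester equation this is exactly the paper's route. Your ``$\sup$ of a sum'' handling of the splitting is in fact slightly more careful than the paper's, which writes an equality that in operator norm only holds as ``$\le$''. The paper then finishes in one line by quoting $\|\mat{Q}^\perp\RBproj\mat{P}\|\le\gamma^{-1}\|\mat{Q}^\perp\RBproj\mat{A}\RBproj^\perp\mat{P}\|$ from \cite{BhaR97}.

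Your proposal has a genuine gap at the corresponding point, the interlaced case. A maximiser $\vec{y}$ of $\|\mat{Q}^\perp\mat{P}\vec{y}\|/\|\RBproj^\perp\mat{P}\vec{y}\|$ only satisfies the generalized eigenproblem $\mat{P}\mat{Q}^\perp\mat{P}\vec{y}=\rho\,\mat{P}\RBproj^\perp\mat{P}\vec{y}$. Nothing forces $\vec{y}$ to be an eigenvector of $\mat{A}\!\!\upharpoonright_{\mat{P}}$, so your eigenvector estimate never becomes applicable. The pieces $(\tilde\lambda_i-\mat{A}\!\!\upharpoonright_{\mat{Q}^\perp\RBproj})^{-1}\mat{R}\mat{P}_i$ belonging to different $\tilde\lambda_i$ can add up coherently, and this is precisely the mechanism behind the constant $\pi/2$ you mention.

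In fact no argument can close this case: with $\gamma$ merely the minimal distance, the inequality is false. Here is a counterexample.
\begin{itemize}
\item Take orthonormal $\vec{e}_1,\vec{e}_2,\vec{g}_1,\vec{g}_2,\vec{f}_1,\vec{f}_2$ with $\ran\mat{Q}=\mathrm{span}\{\vec{e}_1,\vec{e}_2\}$, $\ran\mat{Q}^\perp\RBproj=\mathrm{span}\{\vec{g}_1,\vec{g}_2\}$ and $\ran\RBproj^\perp=\mathrm{span}\{\vec{f}_1,\vec{f}_2\}$.
\item Set $\RBproj\mat{A}\RBproj=\mathrm{diag}(1,3,0,2)$ in the basis $\vec{e}_1,\vec{e}_2,\vec{g}_1,\vec{g}_2$, and $\mat{R}\vec{f}_1=(\vec{g}_1-\vec{g}_2)/\sqrt2$, $\mat{R}\vec{f}_2=(\vec{g}_1+\vec{g}_2)/\sqrt2$.
\item Let $\ran\mat{P}$ be spanned by unit eigenvectors $\vec{p}_1,\vec{p}_2$ for the eigenvalues $\lambda_1=1$, $\lambda_2=3$, with $\RBproj^\perp\vec{p}_j=s\vec{f}_j$. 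Then $\gamma=1$, with interlacing $0<1<2<3$.
\item The Sylvester relation forces $\mat{Q}^\perp\RBproj\vec{p}_1=\tfrac{s}{\sqrt2}(\vec{g}_1+\vec{g}_2)$ and $\mat{Q}^\perp\RBproj\vec{p}_2=\tfrac{s}{\sqrt2}(\tfrac13\vec{g}_1+\vec{g}_2)$.
\item Their inner product $2s^2/3$ is compensated inside $\ran\mat{Q}$ by the coupling $\mat{Q}\mat{A}\RBproj^\perp=\tfrac{\alpha}{s}(\vec{e}_2\vec{f}_1^*-\vec{e}_1\vec{f}_2^*)$, with $\alpha\approx 2s^2/3$ chosen such that $\vec{p}_1\perp\vec{p}_2$.
\item The remaining block $\RBproj^\perp\mat{A}\RBproj^\perp$ is then fixed by $\mat{A}\vec{p}_j=\lambda_j\vec{p}_j$. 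It is Hermitian precisely because $\vec{p}_1\perp\vec{p}_2$, and $1,3$ are simple eigenvalues of $\mat{A}$ for small $s$.
\end{itemize}
Now $\mathrm{Res}^2=1+\alpha^2/s^2=1+O(s^2)$ and $\|\RBproj^\perp\mat{P}\|^2=s^2$, so the theorem asserts $\|\mat{Q}^\perp\mat{P}\|^2\le(2+O(s^2))s^2$. In fact $\|\mat{Q}^\perp\mat{P}\|^2=\lambda_{\max}\left(\begin{smallmatrix}2&2/3\\2/3&14/9\end{smallmatrix}\right)s^2=\tfrac{16+2\sqrt{10}}{9}s^2\approx2.48\,s^2$; for $s=0.1$ this gives $0.0248>0.0200$.

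So the paper's one-line citation of \cite{BhaR97} has exactly the defect you suspected. The constant $1/\gamma$ in the spectral norm is only guaranteed under Davis--Kahan-type separation: one spectrum in some $[a,b]$, the other outside $(a-\gamma,b+\gamma)$. In general $\gamma^{-2}$ must be replaced by $\pi^2/(4\gamma^2)$.

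What you have actually proved is the one-sided case. That is the case used in \Cref{thm:Taylor_rbm_projector_error}: there, by Cauchy interlacing, the discarded Ritz values are bounded below by the $(M+1)$-st eigenvalue of $\mat{A}(\prmtrVec)$ counted with multiplicity, which lies above $\sigma\bigl(\mat{A}\!\!\upharpoonright_{\mat{P}(\prmtrVec)}\bigr)$.
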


This theorem states that the error of the eigenspace approximation by the subspace method,
in case of a meaningful approximation in the sense of $\gamma>0$,
can be controlled by how well the target eigenspace is approximated by the subspace $\cal{V}$,
up to a factor depending on the spectral gap and on how well $\cal{V}$ is close to an invariant subspace of $\mat{A}$.

\begin{proof}
Firstly, we note that $\mat{Q}^\perp {\RBproj}$ is also an orthogonal projector satisfying ${\RBproj} = \mat{Q} + \mat{Q}^\perp {\RBproj}$ by \cite[Thm.~3 in Sec.~2.8]{BirS87}. 
Since $\ran \mat{Q} \subseteq \ran {\RBproj}$, we have $\ran {\RBproj}^\perp \subseteq \ran \mat{Q}^\perp$, and so $\mat{Q}^\perp {\RBproj}^\perp = {\RBproj}^\perp$. 
Using these facts and $\mat{I} = {\RBproj}^\perp + {\RBproj}$, we obtain 
\short{$\mat{Q}^\perp \mat{P} 
= \mat{Q}^\perp ({\RBproj}^\perp + {\RBproj}) \mat{P}
= {\RBproj}^\perp \mat{P} + \mat{Q}^\perp {\RBproj} \mat{P}.
$}
\full{\begin{align*}
\mat{Q}^\perp \mat{P} 
= \mat{Q}^\perp ({\RBproj}^\perp + {\RBproj}) \mat{P}
= {\RBproj}^\perp \mat{P} + \mat{Q}^\perp {\RBproj} \mat{P}.
\end{align*}
}Since ${\RBproj}^\perp \mat{P}$ and $\mat{Q}^\perp {\RBproj} \mat{P}$ have orthogonal ranges,
we deduce
\begin{align} \label{eq:cea_evp_proof_1}
\| \mat{Q}^\perp \mat{P} \|^2
= \| {\RBproj}^\perp \mat{P} \|^2 + \| \mat{Q}^\perp {\RBproj} \mat{P} \|^2.
\end{align}
To bound $\| \mat{Q}^\perp {\RBproj} \mat{P} \|$, 
we rewrite it as the solution of a Sylvester-equation.
Multiplying $\mat{Q}^\perp {\RBproj}$ on the left to both sides of $\mat{P} \mat{A} = \mat{A} \mat{P}$ and using again $\mat{I} = {\RBproj} + {\RBproj}^\perp$, we get
\begin{align} \label{eq:cea_evp_proof_2}
\mat{Q}^\perp {\RBproj} \mat{P} \mat{A} 
\,=\,
\mat{Q}^\perp {\RBproj} \mat{A} \mat{P} 
\,=\,
\mat{Q}^\perp {\RBproj} \mat{A} ({\RBproj} + {\RBproj}^\perp) \mat{P} 
\,=\,
\mat{Q}^\perp {\RBproj} \mat{A} {\RBproj} \mat{P} 
+
\mat{Q}^\perp {\RBproj} \mat{A} {\RBproj}^\perp \mat{P}.
\end{align}
Since $\mat{P} = \mat{P}^2$ commutes with $\mat{A}$, 
the left most term in \eqref{eq:cea_evp_proof_2} can be rewritten as
\begin{align} \label{eq:cea_evp_proof_3}
\mat{Q}^\perp {\RBproj} \mat{P} \mat{A} 
\,=\,
\mat{Q}^\perp {\RBproj}  \mat{P}^2 \mat{A} 
\,=\,
\mat{Q}^\perp {\RBproj} \mat{P} \mat{A} \mat{P} 
\,=\,
\mat{Q}^\perp {\RBproj} \mat{P} \, \mat{A}\!\!\upharpoonright_\mat{P}.
\end{align}
By construction, $\mat{Q}$ commutes with ${\RBproj} \mat{A} {\RBproj}$. 
Since ${\RBproj} = \mat{Q} + \mat{Q}^\perp{\RBproj}$, 
we deduce that $\mat{Q}^\perp {\RBproj} = (\mat{Q}^\perp {\RBproj})^2$ also commutes with ${\RBproj} \mat{A} {\RBproj}$. 
Using this fact and $\mat{Q}^\perp, {\RBproj}$ being projectors, 
we rewrite the first term on the right most side of \eqref{eq:cea_evp_proof_2} as
\begin{align} \label{eq:cea_evp_proof_4}
\mat{Q}^\perp {\RBproj} \mat{A} {\RBproj} \mat{P} 
\,=\,
(\mat{Q}^\perp {\RBproj})^2 {\RBproj} \mat{A} {\RBproj}^2 \mat{P} 
\, = \, 
\mat{Q}^\perp {\RBproj} \mat{A} {\RBproj} \mat{Q}^\perp  {\RBproj} \mat{P} 
\,=\,
 \mat{A}\!\!\upharpoonright_{\mat{Q}^\perp {\RBproj}} \, \mat{Q}^\perp {\RBproj} \mat{P}.
\end{align}
Replacing \eqref{eq:cea_evp_proof_3} and \eqref{eq:cea_evp_proof_4} into the left and right most sides of \eqref{eq:cea_evp_proof_2}  and rearranging the terms,
we arrive at the Sylvester-equation
\begin{align*}
\mat{A}\!\!\upharpoonright_{\mat{Q}^\perp  {\RBproj}} \, \mat{Q}^\perp {\RBproj} \mat{P}
- \mat{Q}^\perp {\RBproj} \mat{P} \, \mat{A}\!\!\upharpoonright_\mat{P} 
\,=\,
- \mat{Q}^\perp {\RBproj} \mat{A} {\RBproj}^\perp \mat{P}.
\end{align*}
\full{Note that viewing $\mat{Q}^\perp {\RBproj} \mat{P}$ as a map from $\ran \mat{P}$ to $\ran \mat{Q}^\perp$ doesn't change its norm.}
By the assumption on the spectral gap $\gamma>0$,
we deduce with \cite[Sec.~10]{BhaR97} that 
\begin{align*}
\short{\| \mat{Q}^\perp {\RBproj} \mat{P} \|
\; \leq \;
{\gamma}^{-1}{\| \mat{Q}^\perp {\RBproj} \mat{A} {\RBproj}^\perp \mat{P} \|}
\; \leq \;
{\gamma}^{-1}{\| {\RBproj} \mat{A} {\RBproj}^\perp \| \, \| {\RBproj}^\perp \mat{P} \|},}
\full{\| \mat{Q}^\perp {\RBproj} \mat{P} \|
\; \leq \;
\frac{\| \mat{Q}^\perp {\RBproj} \mat{A} {\RBproj}^\perp \mat{P} \|}{\gamma}
\; \leq \;
\frac{\| {\RBproj} \mat{A} {\RBproj}^\perp \| \, \| {\RBproj}^\perp \mat{P} \|}{\gamma},}
\end{align*}
where the second step follows from the sub-multiplicativity of the spectral norm combined with the facts that $\|\mat{Q}^\perp\|=1$ and ${\RBproj}^\perp = ({\RBproj}^\perp)^2$.
Inserting this bound into \eqref{eq:cea_evp_proof_1} completes the proof. 
\end{proof}

The last issue in this section concerns the relation between the approximation error of the eigenspaces
and the difference of the corresponding Ritz-value sums.
It is commonly known that the Ritz-values approximate the eigenvalues with a \emph{squared} order compared to the approximation error of the eigenspaces. 
However, the formulation of this squaring effect in the literature usually relies on $\|\mat{Q}^\perp\mat{P}\|$ and involves the spread of $\mat{A}$ or some auxiliary operators, 
see e.g. \cite[Thm.~3.2]{Sun91} or \cite[Lem.~8.2]{GarS24}.  
Here, we present a concise trace identity, which reveals the squaring effect directly. 

\begin{lemma}[Squaring effect of Ritz-value difference]\label{lem:energy_difference_general}
Let us continue with the setting of this section. 
For ${{\projDiff}} \coloneqq \mat{Q} - \mat{P}$, 
it holds that 
\short{
    $\tr \,  \mat{A} {{\projDiff}} 
\,=\, 
\tr \, \mat{A} (\mat{P}^{\perp} -\mat{P}) {{\projDiff}}^{\!2}
\,=\, 
\tr \, \mat{A} (\mat{I} - 2\mat{P}) {{\projDiff}}^{\!2}.$}
\full{\begin{align*}
\tr \,  \mat{A} {{\projDiff}} 
\;=\; 
\tr \, \mat{A} (\mat{P}^{\perp} -\mat{P}) {{\projDiff}}^{\!2}
\;=\; 
\tr \, \mat{A} (\mat{I} - 2\mat{P}) {{\projDiff}}^{\!2}. 
\end{align*}} 
\end{lemma}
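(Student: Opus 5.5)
The plan is a direct algebraic computation based on the identity $\mat{P}\projDiff\mat{P} = -\mat{P}\projDiff^{2}\mat{P}$ and its companion on $\ran \mat{P}^\perp$, followed by the commutation $\mat{A}\mat{P} = \mat{P}\mat{A}$. Only two facts about $\mat{P}$ and $\mat{Q}$ are needed: both are orthogonal projectors, and $\mat{P}$ commutes with $\mat{A}$. The rank condition and the relation $\ran\mat{Q}\subseteq\ran\RBproj$ play no role.

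First expand $\projDiff^2 = (\mat{Q}-\mat{P})^2 = \mat{Q} - \mat{Q}\mat{P} - \mat{P}\mat{Q} + \mat{P}$. Compressing by $\mat{P}$ gives
\begin{align*}
\mat{P}\projDiff^2\mat{P} = \mat{P}\mat{Q}\mat{P} - \mat{P}\mat{Q}\mat{P} - \mat{P}\mat{Q}\mat{P} + \mat{P} = \mat{P} - \mat{P}\mat{Q}\mat{P} = -\mat{P}\projDiff\mat{P},
\end{align*}
and compressing by $\mat{P}^\perp$ gives
\begin{align*}
\mat{P}^\perp\projDiff^2\mat{P}^\perp = \mat{P}^\perp\mat{Q}\mat{P}^\perp = \mat{P}^\perp\projDiff\mat{P}^\perp .
\end{align*}

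Next, since $\mat{A}$ commutes with $\mat{P}$, we have $\mat{A} = \mat{P}\mat{A}\mat{P} + \mat{P}^\perp\mat{A}\mat{P}^\perp$. By cyclicity of the trace, $\tr\,\mat{A}\projDiff = \tr\,\mat{A}\mat{P}\projDiff\mat{P} + \tr\,\mat{A}\mat{P}^\perp\projDiff\mat{P}^\perp$. Substituting the two compression identities yields
\begin{align*}
\tr\,\mat{A}\projDiff = -\tr\,\mat{A}\mat{P}\projDiff^2\mat{P} + \tr\,\mat{A}\mat{P}^\perp\projDiff^2\mat{P}^\perp .
\end{align*}
Using cyclicity and commutation once more, $\tr\,\mat{A}\mat{P}\projDiff^2\mat{P} = \tr\,\mat{P}\mat{A}\mat{P}\projDiff^2 = \tr\,\mat{A}\mat{P}\projDiff^2$, and similarly for $\mat{P}^\perp$. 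Therefore
\begin{align*}
\tr\,\mat{A}\projDiff = \tr\,\mat{A}(\mat{P}^\perp - \mat{P})\projDiff^2 .
\end{align*}
The second equality in the statement is immediate from $\mat{P}^\perp - \mat{P} = \mat{I} - 2\mat{P}$.

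There is no real obstacle. The only point needing care is the bookkeeping in the compression identities: the cross terms $\mat{P}\mat{Q}\mat{P}$ must cancel correctly, and $\mat{P}^\perp\mat{P} = 0$ must be used to kill the terms containing $\mat{P}$.
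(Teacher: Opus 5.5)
Your proof is correct and follows essentially the same route as the paper: split the trace with $\mat{I}=\mat{P}+\mat{P}^\perp$ using $[\mat{A},\mat{P}]=\mat{0}$, then apply the compression identities $\mat{P}\projDiff\mat{P}=-\mat{P}\projDiff^{2}\mat{P}$ and $\mat{P}^\perp\projDiff\mat{P}^\perp=\mat{P}^\perp\projDiff^{2}\mat{P}^\perp$. The only cosmetic difference is that you derive these identities by expanding $(\mat{Q}-\mat{P})^2$, whereas the paper reads them off from $(\mat{P}+\projDiff)^2=\mat{P}+\projDiff$.
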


Note that $\mat{P}^{\perp} -\mat{P} = \mat{I} - 2\mat{P}$ is the generalized Householder reflector,
which reflects across $\ran\mat{P}^{\perp}$. 

\begin{proof}
Since $\mat{I} = \mat{P} + \mat{P}^{\perp}$, and $\mat{P}$ and $\mat{P}^{\perp}$ are orthogonal projectors commuting with $\mat{A}$, 
we know $\mat{P}\mat{A}\mat{P}^{\perp} = \mat{P}^{\perp}\mat{A}\mat{P} = \mat{0}$. 
Using these facts and the linearity as well as cyclicity of the trace, we obtain 
\begin{align} \label{eq:energy_diff_first_manipulation}
\tr \, \mat{A} {{\projDiff}} 
\, = \, 
\tr \, (\mat{P} + \mat{P}^{\perp}) \mat{A} (\mat{P} + \mat{P}^{\!\perp}) {{\projDiff}}  
\, = \, 
\tr \, \mat{A} \mat{P} {{\projDiff}} \, \mat{P} 
+ \tr \, \mat{A} \mat{P}^{\perp} {{\projDiff}} \, \mat{P}^{\perp}.
\end{align}
By the definition of ${{\projDiff}}$, we have $\mat{Q} = \mat{P} + {{\projDiff}}$.
Since $\mat{Q}^2 = \mat{Q}$, we deduce $(\mat{P} + {{\projDiff}})^2 = \mat{P} + {{\projDiff}}$.
Expanding $(\mat{P} + {{\projDiff}})^2$, 
using the idempotent property of $\mat{P}$, multiplying both sides by $\mat{P}$ and $\mat{P}^{\perp}$ from the left and right, respectively,
we get 
\begin{align*}
\mat{P} {{\projDiff}} \, \mat{P} 
= \; - \mat{P} {{\projDiff}}^{\!2} \, \mat{P}
\qquad \text{and} \qquad
\mat{P}^{\perp} {{\projDiff}}  \, \mat{P}^{\perp} 
= \;  \mat{P}^{\perp} {{\projDiff}}^{\!2} \, \mat{P}^{\perp}.
\end{align*}
Applying these two relations to \eqref{eq:energy_diff_first_manipulation}, 
we arrive at
\begin{align*}
\tr \, \mat{A} {{\projDiff}} 
\,= \, 
- \tr \, \mat{A} \mat{P} {{\projDiff}}^{\!2} \, \mat{P} 
+ \tr \, \mat{A} \mat{P}^{\perp} {{\projDiff}}^{\!2} \, \mat{P}^{\perp} 
\,= \, 
- \tr \, \mat{P} \mat{A} \mat{P} {{\projDiff}}^{\!2}  
+ \tr \, \mat{P}^{\perp}  \mat{A} \mat{P}^{\perp} {{\projDiff}}^{\!2} 
\,= \, 
\tr \,  \mat{A} (\mat{P}^{\perp} -\mat{P}) {{\projDiff}}^{\!2},
\end{align*}
where the second step follows from the cyclicity of the trace 
and the third step relies on the fact that $\mat{P}$ and $\mat{P}^{\perp}$ are orthogonal projectors commuting with $\mat{A}$.
\end{proof}

\section{The total projector as an analytic function and its finite truncation}\label{sec:analyticity}

In this section, we study the total eigenprojector, 
i.e., the eigenprojector onto the invariant subspace associated with isolated eigenvalues of an analytic parameter-dependent Hermitian matrix,
following \cite[Chp.~2]{Kato76}. 
The goal of the first three subsections is to \short{sketch the derivation of }\full{present} the analyticity results for the total eigenprojector in the multivariate setting providing explicit formulae. 
Afterwards, we analyze the truncated power series of the total eigenprojector, 
which serves as a motivation for and a good comparison with the Taylor-RBM. 

We begin with clarifying the analyticity of the matrix-valued function $\prmtrVec \mapsto \mat{A}(\prmtrVec)$.
Recall that the \emph{length} of a \emph{multi-index} $\multiidx = (\beta_1, \ldots, \beta_{\Pdim}) \in \bb{N}_0^{\Pdim}$
is given by $|\multiidx| \coloneqq \beta_1 + \cdots + \beta_{\Pdim}$, 
and we denote 
$$
\prmtrVec^{\multiidx} \coloneqq \prmtr_1^{\beta_1} \cdots \prmtr_{\Pdim}^{\beta_{\Pdim}} \in \bb{R} 
\qquad \text{for } \prmtrVec = (\prmtr_1, \ldots, \prmtr_{\Pdim}) \in \bb{R}^{\Pdim}.
$$
Then, the mapping $\prmtrVec \mapsto \mat{A}(\prmtrVec)$ is called \emph{analytic} over $\prmtrSet \subseteq \bb{R}^{\Pdim}$, 
iff for each $\prmtrVec_0 \in \prmtrSet$, there exists $\delta>0$, 
such that $\mat{A}(\prmtrVec)$ can be represented by its convergent power series expansion 
\begin{equation} \label{eq:real_analytic_system}
\mat{A}(\prmtrVec) 
= 
\sum_{\multiidx \in \bb{N}_0^{\Pdim}} (\prmtrVec - \prmtrVec_0)^{\multiidx}  \mat{A}^{\! (\multiidx)} 
\quad \text{for all } 
\prmtrVec \text{ with } |\prmtrVec - \prmtrVec_0| < \delta,
\end{equation}
where the coefficient matrices $\{\mat{A}^{\! (\multiidx)}\}_{\multiidx \in \bb{N}_0^{\Pdim}}$ are given by the derivatives of $\mat{A}(\prmtrVec)$ at $\prmtrVec_0$, 
i.e.,
\begin{equation*}
\mat{A}^{\! (\multiidx)} \coloneqq \frac{1}{\beta_1! \cdots \beta_{\Pdim}!} \frac{\partial^{|\multiidx|} \mat{A}}{\partial \prmtr_1^{\beta_1} \cdots \partial \prmtr_{\Pdim}^{\beta_{\Pdim}}}(\prmtrVec_0)
\quad \text{for all } 
\multiidx \in \bb{N}_0^{\Pdim}.
\end{equation*}

\subsection{Useful properties of the unperturbed case}\label{subsec:unperturbed}
Consider an arbitrary $\prmtrVec_0 \in \prmtrSet$ and denote for brevity $\mat{A}_0 \coloneqq \mat{A}(\prmtrVec_0)$.
Let us recall from \cite[Chp.~1~§5]{Kato76} some useful results regarding $\mat{A}_0$ and its resolvent $\mat{R}_0(\zeta) \coloneqq (\mat{A}_0 - \zeta \, \mat{I})^{-1}$ 
which is defined for $\zeta \notin \sigma(\mat{A}_0)$. 

Let $\EigVal_0 \coloneqq \EigVal(\mat{A}_0)$ be an eigenvalue of $\mat{A}_0$ with multiplicity $m_0 \in \bb{N}$ and corresponding orthogonal eigenprojector~$\mat{P}_{\!0} \coloneqq \mat{P}_{\!0}(\prmtrVec_0)$, 
i.e., $\mat{P}_{\!0}^2 = \mat{P}_{\!0} = \mat{P}_{\!0}^*$, $\mathrm{rank}(\mat{P}_{\!0}) = m_0$ and  $\tr \mat{A}_0\mat{P}_{\!0} = \tr \mat{P}_{\!0} \mat{A}_0  = m_0 \EigVal_0$.

\short{Expanding the meromorphic resolvent $\mat{R}_0(\zeta)$ in a Laurent series around the isolated eigenvalue $\EigVal_0$ yields $\mat{R}_0(\zeta) = \sum_{t=-1}^{\infty} (\zeta - \EigVal_0)^t \, \mat{S}_0^{(t+1)}$ with coefficients $\mat{S}_0^{(0)} = - \mat{P}_{\!0}$ and $\mat{S}_0^{(t)} = \mat{S}_0^t$ for $t\in \bb{N}$, 
where $\mat{S}_0$ is the \emph{reduced resolvent} of $\mat{A}_0$ at $\EigVal_0$, 
which is characterized by  
}
\full{Expanding the meromorphic function $\mat{R}_0(\zeta)$ in a Laurent-series around its isolated singularity~$\EigVal_0$, 
we get 
\begin{align} \label{eq:laurent_series}
\mat{R}_0(\zeta) 
&= \sum_{t=-1}^{\infty} (\zeta - \EigVal_0)^t \, \mat{S}_0^{(t+1)}, 
\end{align}
where the coefficient matrices $\{\mat{S}_0^{(t)}\}_{t \in \bb{N}}$ are obtained via the contour integral
\begin{equation} \label{eq:laurent_coefficients}
\mat{S}_0^{(t)} 
= \frac{1}{2 \pi \i} \oint_{\Gamma} (\zeta - \EigVal_0)^{-t-1} \, \mat{R}_0(\zeta) \, \d{\zeta} 
\end{equation}
with $\Gamma$ being a closed contour in the complex plane encircling only the eigenvalue $\EigVal_0$ of $\mat{A}_0$.
Note that the Laurent-series of $\mat{R}_0(\zeta)$ terminates at $t=-1$, 
since $\mat{A}_0$ is Hermitian \cite[Chp.~1~§5.4]{Kato76}. 
Besides, it is shown in \cite[Chp.~1~§5.3]{Kato76} that the coefficient matrices are
given by $\mat{S}_0^{(0)} \coloneqq - \mat{P}_{\!0}$, 
and $\mat{S}_0^{(t)} \coloneqq \mat{S}_0^t$ for $t\in \bb{N}$, 
with $\mat{S}_0$ being the \emph{reduced resolvent} of $\mat{A}_0$ at $\EigVal_0$, 
which is characterized by 
}
\begin{align}
\mat{S}_0 \, \mat{P}_{\!0} &= \mat{P}_{\!0} \, \mat{S}_0 = \mat{0}, \label{eq:reduced_resolvent_orthogonality} \\
(\mat{A}_0 - \EigVal_0 \, \mat{I}) \, \mat{S}_0 &= \mat{S}_0 \, (\mat{A}_0 - \EigVal_0 \, \mat{I}) = \mat{I} - \mat{P}_{\!0}. \label{eq:reduced_resolvent}
\end{align}
\full{For detailed derivations and proofs of the above facts,
we refer the reader to \cite[Chp.~1~§5]{Kato76}.}

\subsection{Analyticity of the parametric total eigenprojector} \label{subsec:analyticity_total_proj}
\full{We continue with the setting of the last subsection.
By the analyticity assumption of $\mat{A}(\prmtrVec)$ in the sense of~\eqref{eq:real_analytic_system},
we can write for $\prmtrVec$ in a neighborhood of $\prmtrVec_0$ that
\begin{equation}\label{eq:A_perturbation}
\mat{A}(\prmtrVec) 
= 
\mat{A}(\prmtrVec_0) + \sum_{|\multiidx| \geq 1}  (\prmtrVec - \prmtrVec_0)^{\multiidx} \mat{A}^{\!(\multiidx)} 
\eqqcolon \mat{A}_0 + \mat{N}_0(\prmtrVec).
\end{equation}
For $\zeta \notin \sigma \big(\mat{A}(\prmtrVec)\big)$, 
the resolvent of $\mat{A}(\prmtrVec)$ is denoted by $\mat{R}(\zeta;\prmtrVec) \coloneqq (\mat{A}(\prmtrVec) - \zeta \, \mat{I})^{-1}$.
Note from \Cref{subsec:unperturbed} that $\mat{R}(\zeta;\prmtrVec_0) = \mat{R}_0(\zeta) = (\mat{A}_0 - \zeta \, \mat{I})^{-1}$. 
Using the elementary identity 
\begin{equation*}
\mat{R}(\zeta;\prmtrVec) 
= 
(\mat{A}(\prmtrVec) - \zeta \, \mat{I})^{-1}
= 
\Big( \big( \mat{I} +  \mat{N}_0(\prmtrVec)  \mat{R}_0(\zeta) \big) (\mat{A}_0 - \zeta \, \mat{I})\Big)^{-1}
\! = 
\mat{R}_0(\zeta) \Big(\mat{I} +  \mat{N}_0(\prmtrVec)  \mat{R}_0(\zeta) \Big)^{-1}
\end{equation*}
and the Neumann-series expansion $(\mat{I}-\mat{M})^{-1} \! = \sum_{p=0}^\infty \mat{M}^p$ for any matrix $\mat{M}$ with $\|\mat{M}\|<1$,
we obtain for $\prmtrVec,\zeta$ satisfying $\|\mat{N}_0(\prmtrVec)  \mat{R}_0(\zeta)\| < 1$ that
\begin{align}\label{eq:resolvent_perturbation}
\begin{aligned}
\mat{R}(\zeta;\prmtrVec) 
&= \mat{R}_0(\zeta) \sum_{p=0}^{\infty} \big(- \mat{N}_0(\prmtrVec)  \mat{R}_0(\zeta) \big)^p 
= \mat{R}_0(\zeta) + \sum_{p= 1}^{\infty}(-1)^p \, \mat{R}_0(\zeta) \Big(\sum_{|\multiidx| \geq 1} (\prmtrVec - \prmtrVec_0)^{\multiidx}  \mat{A}^{\!(\multiidx)}  \mat{R}_0(\zeta) \Big)^p.
\end{aligned}
\end{align}
Note that the condition $\|\mat{N}_0(\prmtrVec)  \mat{R}_0(\zeta)\| < 1$ can be ensured by choosing $\prmtrVec$ sufficiently close to $\prmtrVec_0$.
In this case, 
the power series expansion in \eqref{eq:resolvent_perturbation} converges absolutely in operator norm.
Hence, we can exchange the order of summation and regroup the terms according to the powers of $(\prmtrVec - \prmtrVec_0)$,
which yields the power series expansion
\begin{equation}\label{eq:resolvent_power_series}
\mat{R}(\zeta;\prmtrVec) 
= 
\sum_{\multiidx \in \bb{N}_0^{\Pdim}} (\prmtrVec - \prmtrVec_0)^{\multiidx} \,\mat{R}_0^{\!(\multiidx)}(\zeta)
\quad \text{for } 
\prmtrVec \text{ in a small neighborhood of } \prmtrVec_0,
\end{equation}
where the coefficient matrices $\{\mat{R}_0^{\!(\multiidx)}(\zeta)\}_{\multiidx \in \bb{N}_0^{\Pdim}}$ are given by $\mat{R}_0^{(\vec{0})}(\zeta) \coloneqq  \mat{R}_0(\zeta)$ and
\begin{equation}\label{eq:resolvent_coefficients}
\mat{R}_0^{\!(\multiidx)}(\zeta) 
\; \coloneqq \;
\sum_{p=1}^{|\multiidx|} (-1)^p 
\! \! \! \!  
\sum_{\substack{\multiidxSec_1, \ldots, \multiidxSec_p \in \bb{N}_0^{\Pdim} \backslash \{\vec{0}\} \\ \multiidxSec_1 + \cdots + \multiidxSec_p = \multiidx}} 
\!\! \mat{R}_0(\zeta) \mat{A}^{\!(\multiidxSec_1)} \mat{R}_0(\zeta) \cdots \mat{A}^{\!(\multiidxSec_p)} \mat{R}_0(\zeta)
\quad \text{for }
|\multiidx| \geq 1.
\end{equation}
In this way, it is shown for $\zeta \notin \sigma(\mat{A}(\prmtrVec))$ that the resolvent map $\mat{R}(\zeta;\prmtrVec)$ is analytic with respect to $\prmtrVec$ in a small neighborhood of $\prmtrVec_0$.

Next, we consider the total projector $\mat{P}_{\!0}(\prmtrVec)$ around $\prmtrVec_0$, which satisfies $\mat{P}_{\!0}(\prmtrVec_0 ) = \mat{P}_{\!0}$.
This is obtained via the contour integral of the resolvent $\mat{R}(\zeta;\prmtrVec)$ around the eigenvalue $\EigVal_0$ of $\mat{A}_0$,
e.g., along the contour $\Gamma$ used in \eqref{eq:laurent_coefficients}. 
Using the power series expansion of $\mat{R}(\zeta;\prmtrVec)$ in \eqref{eq:resolvent_power_series} and exchanging the order of summation and integration due to absolute convergence,
we obtain for $\prmtrVec$ in a small neighborhood of $\prmtrVec_0$ that
\begin{align}\label{eq:projector_power_series}
\begin{aligned}
\mat{P}_{\!0}(\prmtrVec) 
\,=\, \frac{-1}{2 \pi \i} \oint_{\Gamma} \mat{R}(\zeta;\prmtrVec) \, \d{\zeta} 
\,=\, \sum_{\multiidx \in \bb{N}_0^{\Pdim}} (\prmtrVec - \prmtrVec_0)^{\multiidx} \,\frac{-1}{2 \pi \i} \oint_{\Gamma}  \,\mat{R}_0^{\!(\multiidx)}(\zeta) \, \d{\zeta} 
\,\eqqcolon \sum_{\multiidx \in \bb{N}_0^{\Pdim}} (\prmtrVec - \prmtrVec_0)^{\multiidx} \, \mat{P}_{\!0}^{(\multiidx)}.
\end{aligned}
\end{align}
The coefficient matrices $\{\mat{P}_{\!0}^{(\multiidx)}\}_{\multiidx \in \bb{N}_0^{\Pdim}}$ are calculated by putting \eqref{eq:laurent_series} into \eqref{eq:resolvent_coefficients} and observing that only terms with the power $(\zeta - \EigVal_0)^{-1}$ contribute to the contour integral. 
Consequently, 
we obtain $\mat{P}_{\!0}^{(\vec{0})} = -\mat{S}_0^{(\vec{0})} = \mat{P}_{\!0}$ as well as
\begin{equation}\label{eq:projector_coefficients}
\mat{P}_{\!0}^{(\multiidx)} 
\; \coloneqq \;
-\sum_{p=1}^{|\multiidx|} (-1)^p 
\!\!\!\!  
\sum_{\substack{\multiidxSec_1, \ldots, \multiidxSec_p \in \bb{N}_0^{\Pdim} \backslash \{\vec{0}\} \\ \multiidxSec_1 + \cdots + \multiidxSec_p = \multiidx \\ k_1, \ldots, k_{p+1} \in \bb{N}_0 \\ k_1 + \cdots + k_{p+1} = p}} 
\!\!\! \mat{S}_0^{(k_1)} \mat{A}^{\!(\multiidxSec_1)}\mat{S}_0^{(k_2)}  \cdots \mat{S}_0^{(k_{p})} \mat{A}^{\!(\multiidxSec_p)} \mat{S}_0^{(k_{p+1})} 
\quad \text{for } 
|\multiidx| \geq 1.
\end{equation}
With that, the power series expansion of the total eigenprojector $\mat{P}_{\!0}(\prmtrVec)$ for $\prmtrVec$ close to $\prmtrVec_0$ is explicitly determined.
}

\short{
By the analyticity assumption of $\mat{A}(\prmtrVec)$ in the sense of~\eqref{eq:real_analytic_system},
we can write for $\prmtrVec$ in a neighborhood of $\prmtrVec_0$ that
\begin{equation}\label{eq:A_perturbation}
\mat{A}(\prmtrVec) 
= 
\mat{A}(\prmtrVec_0) + \sum_{|\multiidx| \geq 1}  (\prmtrVec - \prmtrVec_0)^{\multiidx} \mat{A}^{\!(\multiidx)} 
\eqqcolon \mat{A}_0 + \mat{N}_0(\prmtrVec).
\end{equation}
For $\zeta \notin \sigma \big(\mat{A}(\prmtrVec)\big)$, 
the resolvent is $\mat{R}(\zeta;\prmtrVec) \coloneqq (\mat{A}(\prmtrVec) - \zeta \, \mat{I})^{-1}$, 
and we write $\mat{R}_0(\zeta) \coloneqq \mat{R}(\zeta;\prmtrVec_0)$.
If $\prmtrVec$ is sufficiently close to $\prmtrVec_0$, 
then $\|\mat{N}_0(\prmtrVec)\mat{R}_0(\zeta)\|<1$ for $\zeta$ inside a contour $\Gamma$ encircling only the eigenvalue $\EigVal_0$ of $\mat{A}_0$.
Hence, $\big(\mat{I} +  \mat{N}_0(\prmtrVec)  \mat{R}_0(\zeta) \big)^{-1}$ has a  Neumann-series  expansion, 
and we obtain
\begin{equation}\label{eq:resolvent_power_series}
\mat{R}(\zeta;\prmtrVec) 
\; = \; 
\mat{R}_0(\zeta) \Big(\mat{I} +  \mat{N}_0(\prmtrVec)  \mat{R}_0(\zeta) \Big)^{-1}
=
\sum_{\multiidx \in \bb{N}_0^{\Pdim}} (\prmtrVec - \prmtrVec_0)^{\multiidx} \,\mat{R}_0^{\!(\multiidx)}(\zeta)
\qquad
\text{for } \prmtrVec \text{ near } \prmtrVec_0,
\end{equation}
where $\mat{R}_0^{(\vec{0})}(\zeta)\coloneqq \mat{R}_0(\zeta)$ and 
\begin{equation}\label{eq:resolvent_coefficients}
\mat{R}_0^{\!(\multiidx)}(\zeta) 
\; \coloneqq \;
\sum_{p=1}^{|\multiidx|} (-1)^p 
\! \! \! \!  
\sum_{\substack{\multiidxSec_1, \ldots, \multiidxSec_p \in \bb{N}_0^{\Pdim} \backslash \{\vec{0}\} \\ \multiidxSec_1 + \cdots + \multiidxSec_p = \multiidx}} 
\!\! \mat{R}_0(\zeta) \mat{A}^{\!(\multiidxSec_1)} \mat{R}_0(\zeta) \cdots \mat{A}^{\!(\multiidxSec_p)} \mat{R}_0(\zeta)
\qquad \text{for }
|\multiidx| \geq 1.
\end{equation}

The total eigenprojector $\mat{P}_{\!0}(\prmtrVec)$ associated with the isolated eigenvalue $\EigVal_0$ is then given by
\begin{align}\label{eq:projector_contour_integral}
    \mat{P}_{\!0}(\prmtrVec)=\frac{-1}{2\pi\i}\oint_{\Gamma}\mat{R}(\zeta;\prmtrVec)\,\d{\zeta}.
\end{align}
Exchanging summation and integration w.r.t.~\eqref{eq:projector_contour_integral} and \eqref{eq:resolvent_power_series}  yields
\begin{equation}\label{eq:projector_power_series}
\mat{P}_{\!0}(\prmtrVec) 
=
\sum_{\multiidx \in \bb{N}_0^{\Pdim}} (\prmtrVec - \prmtrVec_0)^{\multiidx} \, \mat{P}_{\!0}^{(\multiidx)}
\quad \text{with} \quad
\mat{P}_{\!0}^{(\multiidx)} \coloneqq \frac{-1}{2\pi\i}\oint_{\Gamma}\mat{R}_0^{\!(\multiidx)}(\zeta)\,\d{\zeta}.
\end{equation}
Evaluating these contour integrals using the Laurent expansion 
$\mat{R}_0(\zeta) = \sum_{t=-1}^{\infty} (\zeta - \EigVal_0)^t \, \mat{S}_0^{(t+1)}$ 
gives $\mat{P}_{\!0}^{(\vec{0})}=\mat{P}_{\!0}$ and  
\begin{equation}\label{eq:projector_coefficients}
\mat{P}_{\!0}^{(\multiidx)} 
\; \coloneqq \;
-\sum_{p=1}^{|\multiidx|} (-1)^p 
\!\!\!\!  
\sum_{\substack{\multiidxSec_1, \ldots, \multiidxSec_p \in \bb{N}_0^{\Pdim} \backslash \{\vec{0}\} \\ \multiidxSec_1 + \cdots + \multiidxSec_p = \multiidx \\ k_1, \ldots, k_{p+1} \in \bb{N}_0 \\ k_1 + \cdots + k_{p+1} = p}} 
\!\!\! \mat{S}_0^{(k_1)} \mat{A}^{\!(\multiidxSec_1)}\mat{S}_0^{(k_2)}  \cdots \mat{S}_0^{(k_{p})} \mat{A}^{\!(\multiidxSec_p)} \mat{S}_0^{(k_{p+1})} 
\quad \text{for } 
|\multiidx| \geq 1.
\end{equation}
}

\begin{remark}[Connection to classical perturbation results] \label{rem:connection_classical_results} 
In the univariate case $\Pdim=1$, the formulae from \eqref{eq:projector_coefficients} reduce to the classical one-dimensional perturbation results,
cf.~\cite[Chp.~2~§2.2]{Kato76}. 
The key of the generalization from univariate to multivariate cases lies in the natural extension of the univariate polynomial convolution formula "$(AB)_n = \sum_{k=0}^n A_n B_{n-k} $" to the multivariate one "$(AB)_{\multiidx} = \sum_{\multiidxSec_1 + \multiidxSec_2 = \multiidx} A_{\multiidxSec_1} B_{\multiidxSec_2} $".
\end{remark}

Since $\mat{A}(\prmtrVec)$ and $ \mat{P}_{\!0}(\prmtrVec)$ are both analytic in a neighborhood of $\prmtrVec_0$, 
the Ritz-value sum $\tr \, \mat{A}(\prmtrVec) \, \mat{P}_{\!0}(\prmtrVec) $ is also analytic in a neighborhood of $\prmtrVec_0$.

\subsection{Properties of the total projector series and its coefficients} \label{subsec:prop_proj_series}

Since $\mat{P}_{\!0}(\prmtrVec)$ is a projector, 
it satisfies the idempotent property $\mat{P}_{\!0}(\prmtrVec)^2 = \mat{P}_{\!0}(\prmtrVec)$.
Inserting the power series expansion \eqref{eq:projector_power_series} into this relation and comparing the coefficients of the same powers of $(\prmtrVec - \prmtrVec_0)$,
we obtain 
\begin{equation} \label{eq:projector_recursive_relation}
\mat{P}_{\!0}^{(\multiidx)} 
\ = 
\sum_{\substack{\multiidxSec_1, \multiidxSec_2\in \bb{N}_0^{\Pdim}\\ \multiidxSec_1 + \multiidxSec_2 = \multiidx}} \!\! \mat{P}_{\!0}^{(\multiidxSec_1)}  \mat{P}_{\!0}^{(\multiidxSec_2)}.
\end{equation}
This provides a recursive relation among the coefficient matrices $\{\mat{P}_{\!0}^{(\multiidx)}\}_{\multiidx \in \bb{N}_0^{\Pdim}}$,
and the total number of terms on the right-hand side is given by $\prod_{j=1}^{\Pdim} (\beta_j + 1)$.
Moreover, by applying \eqref{eq:projector_recursive_relation} repeatedly, 
we can express each $\mat{P}_{\!0}^{(\multiidx)}$ as a sum of $\prod_{j=1}^{\Pdim} (\beta_j + 1)$ terms,
where each summand contains at least one factor of $\mat{P}_{\!0} = \mat{P}_{\!0}^{(\vec{0})}$. 
\full{For instance, we can use 
$\mat{P}_{\!0}^{(\vec{e}_j)} = \mat{P}_{\!0}^{(\vec{0})}  \mat{P}_{\!0}^{(\vec{e}_j)} + \mat{P}_{\!0}^{(\vec{e}_j)}  \mat{P}_{\!0}^{(\vec{0})}$
to derive 
\begin{align*}
\mat{P}_{\!0}^{(2  \vec{e}_j)}
= \, & 
\mat{P}_{\!0}^{(\vec{0})}  \mat{P}_{\!0}^{(2 \vec{e}_j)} + \mat{P}_{\!0}^{(\vec{e}_j)}  \mat{P}_{\!0}^{(\vec{e}_j)} + \mat{P}_{\!0}^{(2 \vec{e}_j)}  \mat{P}_{\!0}^{(\vec{0})}\\
= \, & 
\mat{P}_{\!0}^{(\vec{0})}  \mat{P}_{\!0}^{(2 \vec{e}_j)} + \big( \mat{P}_{\!0}^{(\vec{0})}  \mat{P}_{\!0}^{(\vec{e}_j)} + \mat{P}_{\!0}^{(\vec{e}_j)}  \mat{P}_{\!0}^{(\vec{0})}\big)  \mat{P}_{\!0}^{(\vec{e}_j)} + \mat{P}_{\!0}^{(2 \vec{e}_j)}  \mat{P}_{\!0}^{(\vec{0})}\\
= \, & 
\mat{P}_{\!0}^{(\vec{0})} \big( \mat{P}_{\!0}^{(2\vec{e}_j)}+\mat{P}_{\!0}^{(\vec{0})}\mat{P}_{\!0}^{(\vec{e}_j)}\mat{P}_{\!0}^{(\vec{e}_j)} \big) + \mat{P}_{\!0}^{(2\vec{e}_j)}\mat{P}_{\!0}^{(\vec{0})} + \mat{P}_{\!0}^{(\vec{e}_j)}\mat{P}_{\!0}^{(\vec{0})}\mat{P}_{\!0}^{(\vec{e}_j)} 
\end{align*}
and similarly 
\begin{align*}
\mat{P}_{\!0}^{(\vec{e}_j+\vec{e}_i)}
= \, & 
\mat{P}_{\!0}^{(\vec{0})}  \mat{P}_{\!0}^{(\vec{e}_j+\vec{e}_i)} + \mat{P}_{\!0}^{(\vec{e}_j)}  \mat{P}_{\!0}^{(\vec{e}_i)} + \mat{P}_{\!0}^{(\vec{e}_i)}  \mat{P}_{\!0}^{(\vec{e}_j)} + \mat{P}_{\!0}^{(\vec{e}_j+\vec{e}_i)}  \mat{P}_{\!0}^{(\vec{0})} \\
=\, &
\mat{P}_{\!0}^{(\vec{0})} \big( \mat{P}_{\!0}^{(\vec{e}_j+\vec{e}_i)} + \mat{P}_{\!0}^{(\vec{e}_j)}\mat{P}_{\!0}^{(\vec{e}_i)} \big) +  \big( \mat{P}_{\!0}^{(\vec{e}_j+\vec{e}_i)} + \mat{P}_{\!0}^{(\vec{e}_i)} \mat{P}_{\!0}^{(\vec{e}_j)} \big) \mat{P}_{\!0}^{(\vec{0})} + \mat{P}_{\!0}^{(\vec{e}_i)} \mat{P}_{\!0}^{(\vec{0})} \mat{P}_{\!0}^{(\vec{e}_j)} + \mat{P}_{\!0}^{(\vec{e}_j)} \mat{P}_{\!0}^{(\vec{0})} \mat{P}_{\!0}^{(\vec{e}_i)}.
\end{align*}}
Formally, this assertion on the number of terms can be proven by induction on the length $|\multiidx|$, 
similar as presented in the proof of \cite[Thm.~2.12 in~Chp.~2]{Kato76}.
\full{We leave the details here for brevity. }

The more important observation is that $\rank\mat{P}_{\!0}^{(\vec{0})} = \rank\mat{P}_{\!0} = m_0$.  
Combining this fact with the repeated applications of \eqref{eq:projector_recursive_relation},
we obtain the following upper bound for the rank of~$\mat{P}_{\!0}^{(\multiidx)}$.

\begin{lemma}[Upper bound on the rank of the coefficient matrices] \label{lem:rank_projector_coefficients}
For each $\multiidx = (\beta_1,\dots \beta_\Pdim) \in \bb{N}_0^{\Pdim}$, 
it holds for $\mat{P}_{\!0}^{(\multiidx)}$ from \eqref{eq:projector_coefficients} that $\rank \, \mat{P}_{\!0}^{(\multiidx)} \leq m_0\prod_{j=1}^{\Pdim} (\beta_j + 1)$.
\end{lemma}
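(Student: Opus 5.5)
We prove by induction on $|\multiidx|$ the stronger statement that $\mat{P}_{\!0}^{(\multiidx)}$ can be written as a sum of at most $\prod_{j=1}^{\Pdim}(\beta_j+1)$ matrices, each of the form $\mat{X}\,\mat{P}_{\!0}\,\mat{Y}$ with some matrices $\mat{X},\mat{Y}$. Each such summand has rank at most $\rank \mat{P}_{\!0} = m_0$. Subadditivity of the rank then gives the claim. The base case $\multiidx=\vec{0}$ is trivial, since $\mat{P}_{\!0}^{(\vec{0})}=\mat{P}_{\!0}$ is a single term.

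For a cleaner induction, we would avoid counting terms produced by repeated substitution. Instead, we extract the structure directly from \eqref{eq:projector_recursive_relation}. Isolating the two summands with $\multiidxSec_1=\vec{0}$ or $\multiidxSec_2=\vec{0}$ gives
\begin{equation*}
\mat{P}_{\!0}^{(\multiidx)} = \mat{P}_{\!0}\mat{P}_{\!0}^{(\multiidx)} + \mat{P}_{\!0}^{(\multiidx)}\mat{P}_{\!0} + \sum_{\substack{\multiidxSec_1+\multiidxSec_2=\multiidx\\ \multiidxSec_1,\multiidxSec_2\neq\vec{0}}} \mat{P}_{\!0}^{(\multiidxSec_1)}\mat{P}_{\!0}^{(\multiidxSec_2)}, \qquad |\multiidx|\ge 1.
\end{equation*}
The first two terms each contain $\mat{P}_{\!0}$ as a factor, so their ranks are at most $m_0$ each. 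The remaining sum has $\prod_j(\beta_j+1)-2$ terms, indexed by $\multiidxSec_1$ with $\vec{0}\le\multiidxSec_1\le\multiidx$ componentwise and $\multiidxSec_1\ne \vec{0},\multiidx$.

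Counting ranks of these interior products by the inductive hypothesis would give too large a bound. We would therefore use a more economical device: a range estimate rather than a term count. Since $\mat{P}_{\!0}(\prmtrVec)$ is idempotent, $\mat{P}_{\!0}(\prmtrVec)=\mat{P}_{\!0}(\prmtrVec)\,\mat{P}_{\!0}(\prmtrVec)$. Iterating this inside the coefficient identity, the natural claim is
\begin{equation*}
\ran \mat{P}_{\!0}^{(\multiidx)} \subseteq \sum_{\multiidxSec\le\multiidx} \ran\big(\mat{P}_{\!0}^{(\multiidxSec)}\mat{P}_{\!0}\big),
\end{equation*}
where $\multiidxSec\le\multiidx$ is meant componentwise. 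This sum has exactly $\prod_j(\beta_j+1)$ summands, each of dimension at most $m_0$, which yields the bound.

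To prove the inclusion, we would induct on $|\multiidx|$ using \eqref{eq:projector_recursive_relation} in the form $\mat{P}_{\!0}^{(\multiidx)}=\sum_{\multiidxSec_1+\multiidxSec_2=\multiidx}\mat{P}_{\!0}^{(\multiidxSec_1)}\mat{P}_{\!0}^{(\multiidxSec_2)}$. The term with $\multiidxSec_2=\vec{0}$ is already of the required form $\mat{P}_{\!0}^{(\multiidx)}\mat{P}_{\!0}$. For $\multiidxSec_2\neq\vec{0}$ we have $|\multiidxSec_2|<|\multiidx|$, so the inductive hypothesis applies to $\mat{P}_{\!0}^{(\multiidxSec_2)}$: its range lies in $\sum_{\multiidxSec\le\multiidxSec_2}\ran \mat{P}_{\!0}^{(\multiidxSec)}\mat{P}_{\!0}$. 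The difficulty is that left multiplication by $\mat{P}_{\!0}^{(\multiidxSec_1)}$ then produces products $\mat{P}_{\!0}^{(\multiidxSec_1)}\mat{P}_{\!0}^{(\multiidxSec)}\mat{P}_{\!0}$, which must be recombined into terms $\mat{P}_{\!0}^{(\multiidxSec_1+\multiidxSec)}\mat{P}_{\!0}$.

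The clean way to achieve this is analytic rather than combinatorial. We use $\mat{P}_{\!0}(\prmtrVec)\mat{P}_{\!0}(\prmtrVec)=\mat{P}_{\!0}(\prmtrVec)$ together with the fact that $\mat{P}_{\!0}(\prmtrVec)\mat{P}_{\!0}$ maps $\ran\mat{P}_{\!0}$ bijectively onto $\ran\mat{P}_{\!0}(\prmtrVec)$ for $\prmtrVec$ near $\prmtrVec_0$, since it is close to the identity on $\ran\mat{P}_{\!0}$. Hence $\ran \mat{P}_{\!0}(\prmtrVec)=\ran \mat{P}_{\!0}(\prmtrVec)\mat{P}_{\!0}$. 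This gives $\mat{P}_{\!0}(\prmtrVec)=\mat{P}_{\!0}(\prmtrVec)\mat{P}_{\!0}\,\mat{G}(\prmtrVec)$ for the analytic matrix $\mat{G}(\prmtrVec)\coloneqq\big(\mat{P}_{\!0}(\prmtrVec)\mat{P}_{\!0}\!\!\upharpoonright_{\mat{P}_{\!0}}\big)^{-1}\mat{P}_{\!0}(\prmtrVec)$, extended by zero on $\ran\mat{P}_{\!0}^\perp$. Expanding both sides in powers of $(\prmtrVec-\prmtrVec_0)$ and comparing coefficients gives
\begin{equation*}
\mat{P}_{\!0}^{(\multiidx)}=\sum_{\multiidxSec\le\multiidx}\mat{P}_{\!0}^{(\multiidxSec)}\mat{P}_{\!0}\,\mat{G}^{(\multiidx-\multiidxSec)}.
\end{equation*}
This is a sum of $\prod_j(\beta_j+1)$ terms, each containing the factor $\mat{P}_{\!0}$, and the rank bound follows by subadditivity. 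The main obstacle is justifying the analyticity of $\mat{G}$. We would handle it via the Neumann series for the inverse of $\mat{P}_{\!0}\mat{P}_{\!0}(\prmtrVec)\mat{P}_{\!0}$ on $\ran\mat{P}_{\!0}$, which converges because $\|\mat{P}_{\!0}(\prmtrVec)-\mat{P}_{\!0}\|<1$ near $\prmtrVec_0$.
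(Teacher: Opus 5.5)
Your final argument is correct, but it takes a different route from the paper.

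\textbf{The paper's route.} The paper works purely at the level of coefficients. It substitutes the idempotency relation \eqref{eq:projector_recursive_relation} into itself repeatedly, as in the worked examples for $2\vec{e}_j$ and $\vec{e}_i+\vec{e}_j$. It then asserts, with the induction only sketched and deferred to Kato, that $\mat{P}_{\!0}^{(\multiidx)}$ regroups into $\prod_{j}(\beta_j+1)$ summands, each containing a factor $\mat{P}_{\!0}$.

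\textbf{Your route.} You factor the function itself, $\mat{P}_{\!0}(\prmtrVec)=\mat{P}_{\!0}(\prmtrVec)\,\mat{P}_{\!0}\,\mat{G}(\prmtrVec)$. One Cauchy-product comparison then yields exactly one summand $\mat{P}_{\!0}^{(\multiidxSec)}\mat{P}_{\!0}\mat{G}^{(\multiidx-\multiidxSec)}$ per $\multiidxSec\le\multiidx$. The count $\prod_j(\beta_j+1)$ comes for free, and the recombination problem you hit in the direct induction never arises.

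\textbf{Trade-off.} The paper's argument is purely algebraic: it needs only $\mat{P}_{\!0}(\prmtrVec)^2=\mat{P}_{\!0}(\prmtrVec)$ and would apply to formal series. Yours uses local invertibility near $\prmtrVec_0$. In exchange, yours is short and complete. It also gives $\ran\mat{P}_{\!0}^{(\multiidx)}\subseteq\sum_{\multiidxSec\le\multiidx}\ran\mat{P}_{\!0}^{(\multiidxSec)}\mat{P}_{\!0}$, a componentwise-sharpened form of the inclusion \eqref{eq:range_inclusion_single_multiidx_goal}. The paper later proves that inclusion for \Cref{thm:connection_P0_Y} with essentially your device: its coefficients $\alpha_j(\prmtrVec)$ are built from $\mat{C}(\prmtrVec)^{-1}$. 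So your proof unifies the two results.

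\textbf{Points to tighten.}
\begin{itemize}
\item The opening induction framing and the exploratory middle paragraphs can be dropped; the factorization argument stands on its own.
\item The inverse you need maps $\ran\mat{P}_{\!0}(\prmtrVec)$ to $\ran\mat{P}_{\!0}$ and equals $\vec{y}\mapsto\bigl(\mat{P}_{\!0}\mat{P}_{\!0}(\prmtrVec)\mat{P}_{\!0}|_{\ran\mat{P}_{\!0}}\bigr)^{-1}\mat{P}_{\!0}\vec{y}$. Hence $\mat{G}(\prmtrVec)=\mat{U}_0\mat{C}(\prmtrVec)^{-1}\mat{U}_0^*\mat{P}_{\!0}(\prmtrVec)$ in the notation of \Cref{lem:projector_parametrization}; equivalently, read it off from $\mat{P}_{\!0}(\prmtrVec)=\mat{U}(\prmtrVec)\mat{C}(\prmtrVec)^{-1}\mat{U}(\prmtrVec)^*$. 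Stating this is what links your $\mat{G}$ to the Neumann series you invoke. It also makes the ``extension by zero'' unnecessary and removes the clash with the paper's compression notation $\upharpoonright$.
\item Surjectivity of $\mat{P}_{\!0}(\prmtrVec)\mat{P}_{\!0}$ onto $\ran\mat{P}_{\!0}(\prmtrVec)$ also uses $\rank\mat{P}_{\!0}(\prmtrVec)=m_0$, not only closeness to the identity.
\item Convergence of a Neumann series of real-analytic functions does not by itself give analyticity. Use Cramer's rule, or regroup the series absolutely as in \Cref{subsec:analyticity_total_proj}.
\end{itemize}
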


We now look at the involved explicit expression of the coefficient matrices $\mat{P}_{\!0}^{(\multiidx)}$ given in \eqref{eq:projector_coefficients}. 
\begin{lemma}[Hermitian property of the coefficient matrices]\label{lem:Hermitian_proj_series_coeff}
For each $\multiidx \in \bb{N}_0^{\Pdim}$,
the matrix $\mat{P}_{\!0}^{(\multiidx)}$ from \eqref{eq:projector_coefficients} satisfies $\mat{P}_{\!0}^{(\multiidx)} = \big( \mat{P}_{\!0}^{(\multiidx)} \big)^*$.
\end{lemma}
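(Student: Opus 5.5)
The plan is to use the explicit formula \eqref{eq:projector_coefficients} and show that the index set of summands is closed under reversal, with the adjoint of each summand being its reversed counterpart.

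First, the building blocks are Hermitian. Each $\mat{A}^{\!(\multiidxSec)}$ is Hermitian: $\mat{A}(\prmtrVec)=\mat{A}(\prmtrVec)^*$ for all real $\prmtrVec$ near $\prmtrVec_0$, so all real partial derivatives at $\prmtrVec_0$ are Hermitian. Dividing by the real factorials keeps them Hermitian. Next, $\mat{S}_0^{(0)}=-\mat{P}_{\!0}$ is Hermitian. For $t\ge 1$ we have $\mat{S}_0^{(t)}=\mat{S}_0^t$. Here $\mat{S}_0$ is Hermitian, because it has the spectral representation $\mat{S}_0=\sum_{\EigVal\neq\EigVal_0}(\EigVal-\EigVal_0)^{-1}\mat{E}_\EigVal$ with real eigenvalues $\EigVal$ and orthogonal eigenprojectors $\mat{E}_\EigVal$ of the Hermitian $\mat{A}_0$. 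This representation follows from the characterization \eqref{eq:reduced_resolvent_orthogonality}--\eqref{eq:reduced_resolvent}, or directly from the Laurent expansion. Hence all powers $\mat{S}_0^t$ are Hermitian.

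Second, take the adjoint of a single summand. It gives
\begin{equation*}
\big(\mat{S}_0^{(k_1)} \mat{A}^{\!(\multiidxSec_1)} \cdots \mat{A}^{\!(\multiidxSec_p)} \mat{S}_0^{(k_{p+1})}\big)^*
= \mat{S}_0^{(k_{p+1})} \mat{A}^{\!(\multiidxSec_p)} \cdots \mat{A}^{\!(\multiidxSec_1)} \mat{S}_0^{(k_1)}.
\end{equation*}
This is again a summand of \eqref{eq:projector_coefficients} with the same $p$. Its indices are $(\multiidxSec_p,\dots,\multiidxSec_1)$ and $(k_{p+1},\dots,k_1)$, and these still satisfy $\sum\multiidxSec_i=\multiidx$, $\multiidxSec_i\neq\vec{0}$ and $\sum k_i=p$. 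Reversal is an involution on the index set, so it is a bijection. The scalar prefactor $-(-1)^p$ is real. Therefore the adjoint of the whole sum is the same sum reordered, which gives $\big(\mat{P}_{\!0}^{(\multiidx)}\big)^*=\mat{P}_{\!0}^{(\multiidx)}$. The case $\multiidx=\vec{0}$ is trivial, since $\mat{P}_{\!0}$ is an orthogonal projector.

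An alternative argument avoids the formula. $\mat{P}_{\!0}(\prmtrVec)$ is an orthogonal projector for real $\prmtrVec$ (the Riesz projector of a Hermitian matrix), so $\mat{P}_{\!0}(\prmtrVec)^*=\mat{P}_{\!0}(\prmtrVec)$. The real parameters give a real-coefficient power series, and uniqueness of Taylor coefficients then yields the claim. The one point needing care is orthogonality of the Riesz projector: $\Gamma$ can be taken symmetric about the real axis, and then $\mat{R}(\zeta;\prmtrVec)^*=\mat{R}(\bar\zeta;\prmtrVec)$. I regard the Hermitian property of the reduced resolvent as the only step requiring justification; the rest is bookkeeping.
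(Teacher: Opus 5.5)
Your proposal is correct and follows the paper's proof: you show that $\mat{A}^{\!(\multiidxSec)}$, $\mat{P}_{\!0}$ and $\mat{S}_0$ are Hermitian, and then observe that the adjoint of each summand in \eqref{eq:projector_coefficients} is the index-reversed summand, which also appears in the sum. Your spectral-representation argument for $\mat{S}_0 = \mat{S}_0^*$ is slightly more careful than the paper's remark that $\mat{S}_0$ and $\mat{P}_{\!0}$ are contour integrals of a ``Hermitian'' resolvent, since in general only $\mat{R}_0(\zeta)^* = \mat{R}_0(\bar\zeta)$ holds.
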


\begin{proof}
Observe that both $\mat{S}_0$ and $\mat{P}_{\!0}$ are Hermitian as they are contour integrals of the Hermitian resolvent $\mat{R}_0(\zeta)$.
Besides, each $\mat{A}^{\!(\multiidx)}$ is also Hermitian, 
since taking (real) partial derivatives preserves the Hermitian property.
Hence, for each summand $\mat{S}_0^{(k_1)} \mat{A}^{\!(\multiidxSec_1)}\mat{S}_0^{(k_2)}  \cdots \mat{S}_0^{(k_{p})} \mat{A}^{\!(\multiidxSec_p)} \mat{S}_0^{(k_{p+1})} $ in \eqref{eq:projector_coefficients}, 
its adjoint is given by $\mat{S}_0^{(k_{p+1})} \mat{A}^{\!(\multiidxSec_p)} \cdots \mat{S}_0^{(k_2)} \mat{A}^{\!(\multiidxSec_1)} \mat{S}_0^{(k_1)}$ and this also appears in the sum.
\end{proof}

If one is interested in constructing the coefficient matrices $\mat{P}_{\!0}^{(\multiidx)}$, 
the explicit formula \eqref{eq:projector_coefficients} would be computationally too expensive to apply. 
Instead, we should use the following recursive relation, 
which is generalized from the univariate density matrix perturbation theory \cite{McW62}.

\begin{proposition}[Multivariate recursive formula for the projector coefficients]\label{prop:multivariate_DMPT}
The coefficients of the projector series \eqref{eq:projector_power_series} satisfy the recursive relation
\begin{align} \label{eq:recursive_relation_projector_coefficients_appendix}
\mat{P}_{\!0}^{(\multiidx)}
\, = \,
- \mat{P}_{\!0} \mat{Z}^{(\multiidx)} \mat{P}_{\!0}
+ \mat{P}_{\!0}^\perp \mat{Z}^{(\multiidx)} \mat{P}_{\!0}^\perp 
- \mat{S}_0 \mat{W}^{(\multiidx)} \mat{P}_{\!0}
- (\mat{S}_0 \mat{W}^{(\multiidx)} \mat{P}_{\!0})^\ast, 
\end{align}
where   
\begin{align*}
\mat{Z}^{(\multiidx)}
\, \coloneqq \!\! 
\sum_{\substack{\multiidxSec_1, \multiidxSec_2 \in \bb{N}_0^\Pdim \\ \multiidxSec_1, \multiidxSec_2 \neq \vec{0} \\ \multiidxSec_1 + \multiidxSec_2 = \multiidx}} 
\!\!
\mat{P}_{\!0}^{(\multiidxSec_1)}  \mat{P}_{\!0}^{(\multiidxSec_2)}  
\quad \text{and} \quad
\mat{W}^{(\multiidx)}
\, \coloneqq \!\!
\sum_{  \substack{ \multiidxSec_1, \multiidxSec_2 \in \bb{N}_0^\Pdim \\ \multiidxSec_1 \neq \vec{0},  \\ \multiidxSec_1 + \multiidxSec_2 = \multiidx}} 
\!\!
\mat{A}^{\!(\multiidxSec_1)}  \mat{P}_{\!0}^{(\multiidxSec_2)} -   \mat{P}_{\!0}^{(\multiidxSec_2)} \mat{A}^{\!(\multiidxSec_1)}
\end{align*}
depend only on the projector coefficients $\mat{P}_{\!0}^{(\multiidxSec)}$ with $|\multiidxSec| < |\multiidx|$.
\end{proposition}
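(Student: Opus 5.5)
The plan is to derive the recursion from two identities satisfied by the analytic family $\mat{P}_{\!0}(\prmtrVec)$: idempotency $\mat{P}_{\!0}(\prmtrVec)^2=\mat{P}_{\!0}(\prmtrVec)$ and the commutation $\mat{A}(\prmtrVec)\mat{P}_{\!0}(\prmtrVec)=\mat{P}_{\!0}(\prmtrVec)\mat{A}(\prmtrVec)$. Inserting the power series \eqref{eq:A_perturbation} and \eqref{eq:projector_power_series} and comparing coefficients of $(\prmtrVec-\prmtrVec_0)^{\multiidx}$ gives, for $|\multiidx|\ge1$,
\begin{align*}
\mat{P}_{\!0}\mat{P}_{\!0}^{(\multiidx)}+\mat{P}_{\!0}^{(\multiidx)}\mat{P}_{\!0}-\mat{P}_{\!0}^{(\multiidx)} = -\mat{Z}^{(\multiidx)},
\qquad
\mat{A}_0\mat{P}_{\!0}^{(\multiidx)}-\mat{P}_{\!0}^{(\multiidx)}\mat{A}_0 = -\mat{W}^{(\multiidx)},
\end{align*}
where the first is \eqref{eq:projector_recursive_relation} with the terms containing $\mat{P}_{\!0}^{(\vec 0)}$ moved to the left, and the second isolates the terms with $\multiidxSec_1=\vec 0$ in the convolution $\sum_{\multiidxSec_1+\multiidxSec_2=\multiidx}(\mat{A}^{(\multiidxSec_1)}\mat{P}_{\!0}^{(\multiidxSec_2)}-\mat{P}_{\!0}^{(\multiidxSec_2)}\mat{A}^{(\multiidxSec_1)})=\mat{0}$ (cf.\ \Cref{rem:connection_classical_results}). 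By construction, $\mat{Z}^{(\multiidx)}$ and $\mat{W}^{(\multiidx)}$ involve only $\mat{P}_{\!0}^{(\multiidxSec)}$ with $|\multiidxSec|<|\multiidx|$, which already settles the last claim of the statement.

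Next I decompose $X\coloneqq\mat{P}_{\!0}^{(\multiidx)}$ into the four blocks with respect to $\mat{I}=\mat{P}_{\!0}+\mat{P}_{\!0}^\perp$. Sandwiching the first identity by $\mat{P}_{\!0}$ on both sides gives $\mat{P}_{\!0}X\mat{P}_{\!0}=-\mat{P}_{\!0}\mat{Z}^{(\multiidx)}\mat{P}_{\!0}$. Sandwiching it by $\mat{P}_{\!0}^\perp$ gives $-\mat{P}_{\!0}^\perp X\mat{P}_{\!0}^\perp=-\mat{P}_{\!0}^\perp\mat{Z}^{(\multiidx)}\mat{P}_{\!0}^\perp$, i.e.\ $\mat{P}_{\!0}^\perp X\mat{P}_{\!0}^\perp=\mat{P}_{\!0}^\perp\mat{Z}^{(\multiidx)}\mat{P}_{\!0}^\perp$. (The off-diagonal blocks of the first identity vanish identically, so they carry no information.) For the off-diagonal block, I multiply the commutator identity by $\mat{P}_{\!0}$ on the right and use $\mat{P}_{\!0}\mat{A}_0=\EigVal_0\mat{P}_{\!0}$ to get $(\mat{A}_0-\EigVal_0\mat{I})X\mat{P}_{\!0}=-\mat{W}^{(\multiidx)}\mat{P}_{\!0}$. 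Applying $\mat{S}_0$ on the left and using \eqref{eq:reduced_resolvent} then gives $\mat{P}_{\!0}^\perp X\mat{P}_{\!0}=-\mat{S}_0\mat{W}^{(\multiidx)}\mat{P}_{\!0}$. Finally, $X$ is Hermitian by \Cref{lem:Hermitian_proj_series_coeff}, so $\mat{P}_{\!0}X\mat{P}_{\!0}^\perp=(\mat{P}_{\!0}^\perp X\mat{P}_{\!0})^*=-(\mat{S}_0\mat{W}^{(\multiidx)}\mat{P}_{\!0})^*$. Summing the four blocks yields \eqref{eq:recursive_relation_projector_coefficients_appendix}.

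The main point needing care is the off-diagonal block. The commutator equation determines $X$ only modulo matrices commuting with $\mat{A}_0$, and the idempotency relation supplies exactly the missing diagonal blocks. One must also check that the reduced resolvent inverts $(\mat{A}_0-\EigVal_0\mat{I})$ precisely on $\ran\mat{P}_{\!0}^\perp$, which is where \eqref{eq:reduced_resolvent} and \eqref{eq:reduced_resolvent_orthogonality} enter. The remaining work is bookkeeping of the multivariate convolutions, to confirm that only strictly lower-order coefficients appear in $\mat{Z}^{(\multiidx)}$ and $\mat{W}^{(\multiidx)}$.
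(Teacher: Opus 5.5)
Your proposal is correct and follows the paper's proof essentially step by step. Both arguments use the four-block decomposition of $\mat{P}_{\!0}^{(\multiidx)}$, read off the two diagonal blocks from the coefficient-wise idempotency relation, obtain the $\mat{P}_{\!0}^\perp(\cdot)\mat{P}_{\!0}$ block from the coefficient-wise commutator identity inverted by $\mat{S}_0$, and get the remaining block from the Hermitian property. The only cosmetic difference is that you apply $\mat{S}_0$ directly via $\mat{S}_0(\mat{A}_0-\EigVal_0\mat{I})=\mat{I}-\mat{P}_{\!0}$, without first multiplying by $\mat{P}_{\!0}^\perp$ on the left, so \eqref{eq:reduced_resolvent_orthogonality} is not actually needed in your version.
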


The key aspect of the generalisation from the univariate formulae to the multivariate ones 
lies again in the replacement of the univariate polynomial convolution formula by a multivariate one, as stated in \Cref{rem:connection_classical_results}. 
Although the univariate case is considered somewhat standard in mathematical physics,
the multivariate version has not been explicitly stated in the literature to the best of our knowledge.
\short{Thus, we formulate it here for completenes. 
\eqref{eq:projector_recursive_relation} is used for the derivation of the first two terms in \eqref{eq:recursive_relation_projector_coefficients_appendix},
and the commutation relation between $\mat{A}(\prmtrVec)$ and $\mat{P}_{\!0}(\prmtrVec)$ for the last two terms.
We refer to \cite{McW62} or \cite[Sec.~10]{GarS24} for the proof of the univariate case.
}
\full{Thus, we state it above and provide a complete proof in the following.

\begin{proof} 
The overall idea is to decompose the coefficient matrix $\mat{P}_{\!0}^{(\multiidx)}$ into four parts according to the orthogonal decomposition $\mat{I} = \mat{P}_{\!0} + \mat{P}_{\!0}^\perp$,
i.e.,
\begin{align} \label{eq:proj_coeff_decomposition}
\mat{P}_{\!0}^{(\multiidx)}
\, = \,
\mat{P}_{\!0} \mat{P}_{\!0}^{(\multiidx)} \mat{P}_{\!0}
+
\mat{P}_{\!0}^\perp \mat{P}_{\!0}^{(\multiidx)} \mat{P}_{\!0}^\perp
+
\mat{P}_{\!0}^\perp \mat{P}_{\!0}^{(\multiidx)} \mat{P}_{\!0}
+
\mat{P}_{\!0} \mat{P}_{\!0}^{(\multiidx)} \mat{P}_{\!0}^\perp,
\end{align}
and derive recursive expressions for each of these four parts separately. 

For the first two parts, 
we recall from \eqref{eq:projector_recursive_relation} that we can insert the series development into the idempotent property $\mat{P}(\prmtrVec)^2=\mat{P}(\prmtrVec)$ to deduce that
\begin{align} \label{eq:projector_idempotent_coefficient_relation}
\mat{P}_{\!0}^{(\multiidx)}
\, = \,
\sum_{\substack{\multiidxSec_1, \multiidxSec_2 \in \bb{N}_0^\Pdim \\ \multiidxSec_1 + \multiidxSec_2 = \multiidx}} 
\!\!\mat{P}_{\!0}^{(\multiidxSec_1)}  \mat{P}_{\!0}^{(\multiidxSec_2)}  
\, = \, 
\mat{P}_{\!0} \mat{P}_{\!0}^{(\multiidx)} + \mat{P}_{\!0}^{(\multiidx)} \mat{P}_{\!0}
+ \mat{Z}^{(\multiidx)}.
\end{align}
Projecting \eqref{eq:projector_idempotent_coefficient_relation} onto $\ran \, \mat{P}_{\!0}$ and onto $\ran \, \mat{P}_{\!0}^\perp$ from both sides, respectively, 
and rearranging the resulting expressions,
we obtain the expression for the first two parts in \eqref{eq:proj_coeff_decomposition} as
\begin{align} \label{eq:first_2_parts_proj_coeff}
\mat{P}_{\!0} \mat{P}_{\!0}^{(\multiidx)} \mat{P}_{\!0}
\, = \,
- \mat{P}_{\!0} \mat{Z}^{(\multiidx)} \mat{P}_{\!0}
\quad \text{and} \quad
\mat{P}_{\!0}^\perp \mat{P}_{\!0}^{(\multiidx)} \mat{P}_{\!0}^\perp
\, = \,
\mat{P}_{\!0}^\perp \mat{Z}^{(\multiidx)} \mat{P}_{\!0}^\perp.
\end{align}

To derive expressions for the last two parts in \eqref{eq:proj_coeff_decomposition},
let us, for the brevity of notations, 
introduce the commutation relation $[\mat{M}_1,\mat{M}_2] \coloneqq \mat{M}_1 \mat{M}_2 - \mat{M}_2 \mat{M}_1$ for any matrices $\mat{M}_1, \mat{M}_2$.
With this, we rewrite the commutation relation between the system matrix and the eigenprojector as
\begin{align*}
\mat{A}(\prmtrVec) \, \mat{P}(\prmtrVec) 
\, = \,
\mat{P}(\prmtrVec) \, \mat{A}(\prmtrVec)
\quad \Leftrightarrow \quad
[\mat{A}(\prmtrVec), \, \mat{P}(\prmtrVec)]
\, = \,
\mat{0}.
\end{align*}
Plugging in the series developments and comparing the coefficients of order $\multiidx$, we obtain
\begin{align} \label{eq:commutation_relation_coefficient}
\sum_{\substack{\multiidxSec_1, \multiidxSec_2 \in \bb{N}_0^\Pdim \\ \multiidxSec_1 + \multiidxSec_2 = \multiidx}} 
\!\!
[\mat{A}^{\!(\multiidxSec_1)}, \, \mat{P}_{\!0}^{(\multiidxSec_2)}]
\, = \,
\mat{0}
\quad &\Leftrightarrow \quad
- \mat{W}^{(\multiidx)}
\, = \,
[\mat{A}_0, \, \mat{P}_{\!0}^{(\multiidx)}]
\, = \,
\mat{A}_0 \, \mat{P}_{\!0}^{(\multiidx)} - \mat{P}_{\!0}^{(\multiidx)} \, \mat{A}_0. 
\end{align}
Projecting $\mat{P}_{\!0}^\perp$ on the left and $\mat{P}_{\!0}$ on the right of \eqref{eq:commutation_relation_coefficient},
we obtain for the third part in \eqref{eq:proj_coeff_decomposition} that
\begin{align*}
\mat{P}_{\!0}^\perp \mat{A}_0 \mat{P}_{\!0}^{(\multiidx)} \mat{P}_{\!0}
- 
\mat{P}_{\!0}^\perp \mat{P}_{\!0}^{(\multiidx)} \mat{A}_0 \mat{P}_{\!0} 
\, = \,
- \mat{P}_{\!0}^\perp \mat{W}^{(\multiidx)} \mat{P}_{\!0}.
\end{align*}
Simplifying this equality with the relations $\mat{P}_{\!0}\mat{A}_{\!0} = \lambda_0 \mat{P}_{\!0}$ as well as $\mat{A}_{\!0} \mat{P}_{\!0}^\perp = \mat{P}_{\!0}^\perp \mat{A}_{\!0}$, 
and rearranging the resulting expression, we arrive at
\begin{align*}
(\mat{A}_0 - \lambda_0 \mat{I})
\, \big( \mat{P}_{\!0}^\perp \mat{P}_{\!0}^{(\multiidx)} \mat{P}_{\!0} \big)
\, = \,
- \mat{P}_{\!0}^\perp \mat{W}^{(\multiidx)} \mat{P}_{\!0}.
\end{align*}
Using the property \eqref{eq:reduced_resolvent} of $\mat{S}_0$, we thus obtain the expression for the fourth part in \eqref{eq:proj_coeff_decomposition}. 
We observe that the fourth part is the adjoint of the third by \Cref{lem:Hermitian_proj_series_coeff}.

Collecting all four parts from \eqref{eq:first_2_parts_proj_coeff} and the above derivations,
we arrive at the desired relation \eqref{eq:recursive_relation_projector_coefficients_appendix}.
This finishes the proof of \Cref{prop:multivariate_DMPT}.
\end{proof}}

Let us summarize the previous content in the following theorem.

\begin{theorem}[Analyticity of the total eigenprojector of one isolated eigenvalue] \label{thm:projector_power_series}
Let $\prmtrSet \subseteq \bb{R}^{\Pdim}$ and $\mat{A} : \prmtrSet \to \bb{C}^{N \times N}$ be a Hermitian matrix-valued function that is real-analytic in an open neighborhood of $\prmtrVec_0 \in \prmtrSet$ in the sense of \eqref{eq:real_analytic_system}.
Assume that $\EigVal_0$ is an isolated eigenvalue of $\mat{A}(\prmtrVec_0)$ with multiplicity $m \in \bb{N}$,
and let $\mat{P}_{\!0}$ be the total eigenprojector onto the eigenspace associated with the eigenvalue~$\EigVal_0$.

Then, for $\prmtrVec$ in a sufficiently small neighborhood of $\prmtrVec_0$,
there exists an analytic orthogonal projector-valued function $\mat{P}_{\!0}(\prmtrVec)$ commuting with $\mat{A}(\prmtrVec)$ such that $\mat{P}_{\!0}(\prmtrVec_0) = \mat{P}_{\!0}$. 
The coefficient matrices of its power series expansion w.r.t. $\prmtrVec_0$ is given by \eqref{eq:projector_coefficients}. 
For each $\multiidx = (\beta_1, \dots, \beta_{\Pdim}) \in \bb{N}_0^{\Pdim}$, 
the coefficient matrix $\mat{P}_{\!0}^{(\multiidx)}$ is Hermitian, 
has rank at most $m_0 \prod_{j=1}^{\Pdim} (\beta_j + 1)$ and satisfies the relations given in \Cref{prop:multivariate_DMPT}.
The Ritz-value sum $\tr \mat{A}(\prmtrVec) \mat{P}_{\!0}(\prmtrVec)$ is also analytic in a neighborhood of $\prmtrVec_0$.
\end{theorem}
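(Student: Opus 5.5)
The theorem collects the results of this section, so the proof assembles the pieces already derived and checks the properties not yet fully justified.

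\textbf{Step 1: an isolating contour.} Fix a circle $\Gamma$ around $\EigVal_0$ that separates it from the rest of $\sigma(\mat{A}_0)$. By compactness of $\Gamma$, $\sup_{\zeta\in\Gamma}\|\mat{R}_0(\zeta)\| = d^{-1}$, where $d$ is the distance from $\Gamma$ to $\sigma(\mat{A}_0)$. Since $\mat{N}_0(\prmtrVec)\to\mat{0}$ as $\prmtrVec\to\prmtrVec_0$, there is $\delta>0$ with $\|\mat{N}_0(\prmtrVec)\|<d$ whenever $|\prmtrVec-\prmtrVec_0|<\delta$. Shrinking $\delta$ below the convergence radius of the series of absolute norms $\sum|\prmtrVec-\prmtrVec_0|^{|\multiidx|}\|\mat{A}^{(\multiidx)}\|$ (possible by analyticity on a slightly smaller polydisc), the Neumann series converges absolutely and uniformly in $\zeta\in\Gamma$. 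This justifies both the regrouping into \eqref{eq:resolvent_power_series} and the exchange of sum and integral in \eqref{eq:projector_power_series}. Evaluating residues with the Laurent expansion then gives \eqref{eq:projector_coefficients}, so $\mat{P}_{\!0}(\prmtrVec)$ is analytic.

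\textbf{Step 2: $\mat{P}_{\!0}(\prmtrVec)$ is an orthogonal projector commuting with $\mat{A}(\prmtrVec)$.} By Weyl's inequality, $\sigma(\mat{A}(\prmtrVec))$ stays within distance $\|\mat{N}_0(\prmtrVec)\|<d$ of $\sigma(\mat{A}_0)$, so no eigenvalue lies on $\Gamma$. Hence the Riesz integral is the spectral projector of $\mat{A}(\prmtrVec)$ onto the eigenvalues enclosed by $\Gamma$. It is idempotent and commutes with $\mat{A}(\prmtrVec)$ as a function of it. It is Hermitian because $\mat{R}(\zeta;\prmtrVec)^*=\mat{R}(\bar\zeta;\prmtrVec)$ and $\Gamma$ can be taken symmetric about the real axis. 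The value at $\prmtrVec_0$ is $\mat{P}_{\!0}$ because the $\multiidx=\vec 0$ term is $-\frac{1}{2\pi\i}\oint_\Gamma\mat{R}_0=\mat{P}_{\!0}$. Continuity plus integer-valued rank keeps the rank equal to $m$.

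\textbf{Step 3: coefficient properties.} Hermiticity of each coefficient is \Cref{lem:Hermitian_proj_series_coeff}. The rank bound is \Cref{lem:rank_projector_coefficients}. The recursion is \Cref{prop:multivariate_DMPT}, whose proof only uses the coefficient identities from $\mat{P}_{\!0}(\prmtrVec)^2=\mat{P}_{\!0}(\prmtrVec)$ and $[\mat{A}(\prmtrVec),\mat{P}_{\!0}(\prmtrVec)]=\mat{0}$; these are now legitimate by Step 2 together with uniqueness of power series coefficients.

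The one point I would still make rigorous is the term-count claim behind the rank bound. The plan is induction on $|\multiidx|$: expand \eqref{eq:projector_recursive_relation}, peel off the two terms $\mat{P}_{\!0}\mat{P}_{\!0}^{(\multiidx)}$ and $\mat{P}_{\!0}^{(\multiidx)}\mat{P}_{\!0}$, which contain $\mat{P}_{\!0}$ explicitly, and substitute the inductive representations of the lower-order factors. This exhibits $\mat{P}_{\!0}^{(\multiidx)}$ as a sum of at most $\prod_j(\beta_j+1)$ products, each containing a factor $\mat{P}_{\!0}$ and therefore of rank at most $m$. A cruder but safe alternative uses \Cref{prop:multivariate_DMPT} directly: $\ran\mat{P}_{\!0}^{(\multiidx)}$ lies in the span of the ranges of lower-order coefficients together with $\ran\mat{P}_{\!0}$ and $\ran\mat{S}_0\mat{W}^{(\multiidx)}\mat{P}_{\!0}$, which gives a rank bound by induction.

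\textbf{Step 4: the Ritz-value sum.} $\tr\mat{A}(\prmtrVec)\mat{P}_{\!0}(\prmtrVec)$ is the trace of a product of two convergent power series, so its Cauchy product converges absolutely on the common domain and the function is analytic.

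The main obstacle is Step 1, namely making the regrouping of the multivariate Neumann series rigorous with a uniform $\delta$. The rest is bookkeeping on earlier results.
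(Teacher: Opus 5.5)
Your proposal is correct and follows the paper's own route: the theorem just collects the preceding Neumann-series and Riesz-integral derivation, the residue computation behind \eqref{eq:projector_coefficients}, \Cref{lem:Hermitian_proj_series_coeff}, \Cref{lem:rank_projector_coefficients} and \Cref{prop:multivariate_DMPT}, and you are more explicit than the paper about why the Riesz integral is an orthogonal projector commuting with $\mat{A}(\prmtrVec)$. Two minor refinements: first, the regrouping in Step~1 needs the majorant $\sum_{|\multiidx|\geq 1}\delta^{|\multiidx|}\|\mat{A}^{(\multiidx)}\|$ itself to be below $\mathrm{dist}(\Gamma,\sigma(\mat{A}_0))$ (finiteness plus small $\|\mat{N}_0(\prmtrVec)\|$ is not enough, though shrinking $\delta$ fixes it); second, your Proposition-based alternative for the rank bound is not cruder at all, because $\ran\mat{P}_{\!0}^{(\multiidxSec)}$ lies in the span of $\ran\mat{S}_0\mat{W}^{(\multiidxSec)}\mat{P}_{\!0}$ (dimension at most $m$) and the ranges of $\mat{P}_{\!0}^{(\boldsymbol{\kappa})}$ with $\boldsymbol{\kappa}\leq\multiidxSec$ componentwise, $\boldsymbol{\kappa}\neq\multiidxSec$, so counting over all $\multiidxSec\leq\multiidx$ gives exactly $m\prod_{j}(\beta_j+1)$, whereas naive substitution in your term-count induction overcounts unless terms are grouped by the factor preceding the first $\mat{P}_{\!0}$.
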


So far we have focused on the total eigenprojector and the Ritz-value sum. 
The following remark concerns the behavior of individual eigenvalues and eigenvectors.
\begin{remark}[Individual eigenvalues and eigenvectors] \label{rem:individual_eigenvalues}
Under the assumptions of \Cref{thm:projector_power_series}, 
we know that $\mat{P}_{\!0}(\prmtrVec)$ has rank $m_0$ for $\prmtrVec$ sufficiently close to $\prmtrVec_0$.
Modifying the arguments from \Cref{subsec:analyticity_total_proj} as in \cite[Sec.~5.1~\&~5.7]{Kato76}, 
we can show that the $m_0$ individual eigenvalues are \emph{continuous} functions in a neighborhood of $\prmtrVec_0$, 
and they all converge to $\EigVal_0$ as $\prmtrVec \to \prmtrVec_0$. 
However, we cannot guarantee that individual eigenvectors are continuous functions in $\prmtrVec$ in general. 
A good demonstration of this phenomenon has been given in \eqref{eq:kato_example}.
Hence, it is more natural in the multivariate parametric setting to consider the total eigenprojector and the Ritz-value sum. 
\end{remark}

We conclude the section with a remark on the case of multiple isolated eigenvalues.

\begin{remark}[Analyticity of the total eigenprojector of multiple eigenvalues] \label{rem:multiple_eigenvalues}
For multiple isolated eigenvalues of $\mat{A}(\prmtrVec_0)$,
the above results can be applied to each eigenvalue separately,
as eigenspaces w.r.t. different eigenvalues are orthogonal.
Hence, we obtain  
the analyticity of the total eigenprojectors onto the corresponding eigenspaces
and of the Ritz-value sums associated with these eigenspaces in a small neighborhood of $\prmtrVec_0$,
by summing up the respective power series expansions. 
We omit the concrete formulation here for brevity, 
and refer to \cite[Thm.~1]{GSH23} for a more general result w.r.t.\! clusters of eigenvalues with a slightly different proof strategy.
\end{remark}

\subsection{Truncation of the total eigenprojector series}\label{subsec:truncated_projector_series}
Let $\lambda_1,\dots, \lambda_K$ be some \emph{distinct} eigenvalues of $\mat{A}(\prmtrVec_0)$ with respective multiplicities $m_1,\dots, m_K \in \bb{N}$, 
and let $\mat{P}(\prmtrVec_0)$ be the total eigenprojector onto the eigenspaces associated with these eigenvalues, 
i.e.,
\begin{align}\label{eq:property_P0}
\mat{P}(\prmtrVec_0) 
\mat{A}(\prmtrVec_0) = \mat{A}(\prmtrVec_0) \mat{P}(\prmtrVec_0),
\quad
\tr \,  \mat{A}(\prmtrVec_0) \mat{P}(\prmtrVec_0)  = \sum_{k=1}^K m_k \, \lambda_k,
\quad
\rank\, \mat{P}(\prmtrVec_0) = \sum_{k=1}^K m_k = M.
\end{align}
From the last section, we know that there exists an analytic projector-valued function $\mat{P}(\prmtrVec)$ in a neighborhood of $\prmtrVec_0$ such that $\mat{P}(\prmtrVec_0) = \mat{P}$.
Let us now consider power series of $\mat{P}(\prmtrVec)$ and its $\Ord$-th order truncation $\mat{P}^{[\Ord]}(\prmtrVec)$ at point $\prmtrVec_0$, i.e.,
\begin{align} \label{eq:series_P_Pn}
\mat{P}(\prmtrVec) = \sum_{\multiidx \in \bb{N}_0^{\Pdim}} (\prmtrVec - \prmtrVec_0)^{\multiidx} \, \mat{P}^{(\multiidx)}
\quad \text{and} \quad
\mat{P}^{[\Ord]}(\prmtrVec) \coloneqq \sum_{|\multiidx| \leq \Ord} (\prmtrVec - \prmtrVec_0)^{\multiidx} \, \mat{P}^{(\multiidx)}
\end{align}
for $\prmtrVec$ sufficiently close to $\prmtrVec_0$.
We also define a power series 
\begin{align} \label{eq:series_P_norm}
P_{n+1}(x) 
\,\coloneqq\, 
\sum_{p=0}^{\infty} x^p \Big( \sum_{|\multiidx| = p+\Ord+1} \| \mat{P}^{(\multiidx)} \| \Big) ,
\end{align}
which is obtained by applying the triangle inequality to $\|\mat{P}(\prmtrVec) - \mat{P}^{[\Ord]}(\prmtrVec) \|$, regrouping the terms
and dividing the resulting expression by $x^{\Ord+1}$ with $x \coloneqq |\prmtrVec - \prmtrVec_0|$.
Due to the analyticity of $\mat{P}(\prmtrVec)$, 
$P_{n+1}(x)$ is well-defined and convergent for $x$ sufficiently small.

In general, the truncated projector $\mat{P}^{[\Ord]}(\prmtrVec)$ is not an orthogonal projector.
Nonetheless, a part of its range should be close to $\mat{P}(\prmtrVec)$ for $\prmtrVec$ close to $\prmtrVec_0$. 
This intuition is made precise as follows.

\begin{lemma}[Total eigenprojector error concerning the truncated projector series]\label{lem:closedness_test_space}
Within the above setting, 
there exist constants $\varepsilon >0$,
$\delta_\rm{pt} \in (2,4]$ and 
an analytic orthogonal projector-valued function ${\mat{Q}_{\rm{pt}}}(\prmtrVec)$, 
satisfying for all $|\prmtrVec - \prmtrVec_0| \leq \varepsilon$ that $\ran \, {\mat{Q}_{\rm{pt}}}(\prmtrVec) = \ran \, \mat{P}^{[\Ord]}(\prmtrVec)$ and
\begin{align*}
\| {\mat{Q}_{\rm{pt}}}(\prmtrVec) - \mat{P}(\prmtrVec) \| 
\; \leq \;
\delta_\rm{pt} |\prmtrVec - \prmtrVec_0|^{\Ord+1} P_{n+1}(|\prmtrVec - \prmtrVec_0|)
\end{align*} 
with $P_{n+1}(x)$ defined in \eqref{eq:series_P_norm}.
In particular, we have the asymptotic estimate
\begin{align*}
\| {\mat{Q}_{\rm{pt}}}(\prmtrVec) - \mat{P}(\prmtrVec) \| 
\; = \;
\bigO \big( |\prmtrVec - \prmtrVec_0|^{\Ord + 1} \big) 
\quad \text{as } \prmtrVec \to \prmtrVec_0,
\end{align*}
and the coefficient of the leading error term is given by $\delta_\rm{pt} P_{\Ord+1}(0) = \delta_\rm{pt} \sum_{|\multiidx| = \Ord + 1}   \| \mat{P}^{(\multiidx)} \|$. 
\end{lemma}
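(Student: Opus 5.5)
The plan is to build ${\mat{Q}_{\rm{pt}}}(\prmtrVec)$ as a spectral projector of the Hermitian matrix $\mat{P}^{[\Ord]}(\prmtrVec)$ and then compare it with $\mat{P}(\prmtrVec)$ through a resolvent identity. The error estimate will come out cleanly. The claimed identity $\ran{\mat{Q}_{\rm{pt}}}(\prmtrVec) = \ran\mat{P}^{[\Ord]}(\prmtrVec)$ is the step I have not settled, and I say below exactly what it reduces to.

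\emph{Setup.} Write $\mat{E}(\prmtrVec) \coloneqq \mat{P}^{[\Ord]}(\prmtrVec) - \mat{P}(\prmtrVec) = -\sum_{|\multiidx|\geq \Ord+1}(\prmtrVec-\prmtrVec_0)^{\multiidx}\mat{P}^{(\multiidx)}$. By \Cref{lem:Hermitian_proj_series_coeff} (applied to each cluster and summed, \Cref{rem:multiple_eigenvalues}), $\mat{E}(\prmtrVec)$ is Hermitian. Put $x \coloneqq |\prmtrVec-\prmtrVec_0|$. Since $|(\prmtrVec-\prmtrVec_0)^{\multiidx}| \le x^{|\multiidx|}$, the triangle inequality gives
\begin{align*}
\eta(\prmtrVec) \coloneqq \|\mat{E}(\prmtrVec)\| \le x^{\Ord+1}P_{\Ord+1}(x).
\end{align*}
Choose $\varepsilon>0$ so small that $\eta \le 1/4$ whenever $x \le \varepsilon$.

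\emph{Defining ${\mat{Q}_{\rm{pt}}}$.} The matrix $\mat{P}(\prmtrVec)$ has spectrum $\{0,1\}$, with eigenvalue $1$ of multiplicity $M$. By Weyl's inequality, $\mat{P}^{[\Ord]}(\prmtrVec)$ has exactly $M$ eigenvalues in $[1-\eta,1+\eta]$ and the remaining ones in $[-\eta,\eta]$. Let $\Gamma$ be the circle $|\zeta-1| = 1/2$. Define
\begin{align*}
{\mat{Q}_{\rm{pt}}}(\prmtrVec) \coloneqq \frac{-1}{2\pi\i}\oint_\Gamma \big(\mat{P}^{[\Ord]}(\prmtrVec)-\zeta\mat{I}\big)^{-1}\d{\zeta}.
\end{align*}
This is an orthogonal projector of rank $M$. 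It is analytic in $\prmtrVec$, because $\mat{P}^{[\Ord]}$ is polynomial in $\prmtrVec$ and the resolvent is analytic on $\Gamma$ uniformly for $x\le\varepsilon$.

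\emph{Error bound.} Subtract the corresponding contour integral for $\mat{P}(\prmtrVec)$ and use the second resolvent identity:
\begin{align*}
{\mat{Q}_{\rm{pt}}}-\mat{P} = \frac{1}{2\pi\i}\oint_\Gamma \mat{R}_{\mat{P}^{[\Ord]}}(\zeta)\,\mat{E}\,\mat{R}_{\mat{P}}(\zeta)\,\d{\zeta}.
\end{align*}
On $\Gamma$ we have $\|\mat{R}_{\mat{P}}(\zeta)\| = 2$ and $\|\mat{R}_{\mat{P}^{[\Ord]}}(\zeta)\| \le (1/2-\eta)^{-1}$, and $\Gamma$ has length $\pi$. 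Hence
\begin{align*}
\|{\mat{Q}_{\rm{pt}}}-\mat{P}\| \le \frac{\eta}{1/2-\eta} = \frac{2}{1-2\eta}\,\eta.
\end{align*}
For $\eta\in(0,1/4]$ the factor $\delta_{\rm pt}\coloneqq 2/(1-2\eta)$ lies in $(2,4]$. Inserting $\eta\le x^{\Ord+1}P_{\Ord+1}(x)$ gives the bound, and letting $x\to 0$ gives the asymptotic statement with leading coefficient $\delta_{\rm pt}P_{\Ord+1}(0)$.

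\emph{Range statement.} One inclusion is direct. $\ran{\mat{Q}_{\rm{pt}}}(\prmtrVec)$ is spanned by eigenvectors of $\mat{P}^{[\Ord]}(\prmtrVec)$ whose eigenvalues are at least $1-\eta>0$. Each such $\vec{v}$ equals $\mat{P}^{[\Ord]}\vec{v}/\lambda$, so $\ran{\mat{Q}_{\rm{pt}}}\subseteq\ran\mat{P}^{[\Ord]}$.

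The reverse inclusion is the main obstacle. Since $\mat{P}^{[\Ord]}$ is Hermitian, $\ran\mat{P}^{[\Ord]}$ is spanned by all eigenvectors with nonzero eigenvalue. Equality therefore holds exactly when the eigenvalues of $\mat{P}^{[\Ord]}$ in $[-\eta,\eta]$ are all zero, i.e. when $\rank\mat{P}^{[\Ord]}(\prmtrVec) = M$.

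I would first test this using the structure of the coefficients. Write
\begin{align*}
\mat{P}^{[\Ord]} = \mat{P}+\sum_{1\le|\multiidx|\le \Ord}(\prmtrVec-\prmtrVec_0)^{\multiidx}\mat{P}^{(\multiidx)}
\end{align*}
and expand each $\mat{P}^{(\multiidx)}$ by \Cref{prop:multivariate_DMPT} into blocks with respect to $\mat{P}_{\!0}$ and $\mat{P}_{\!0}^\perp$. The task is to check whether the block $\mat{P}_{\!0}^\perp\mat{P}^{[\Ord]}\mat{P}_{\!0}^\perp$ coincides with its Schur-complement value $\mat{P}_{\!0}^\perp\mat{P}^{[\Ord]}\mat{P}_{\!0}(\mat{P}_{\!0}\mat{P}^{[\Ord]}\mat{P}_{\!0})^{-1}\mat{P}_{\!0}\mat{P}^{[\Ord]}\mat{P}_{\!0}^\perp$; that identity is exactly the condition $\rank\mat{P}^{[\Ord]} = M$.

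I do not see this identity holding for general $\Ord$. The idempotency relations \eqref{eq:projector_recursive_relation} fix the $\mat{P}_{\!0}^\perp\cdot\mat{P}_{\!0}^\perp$ blocks only up to order $\Ord$, and the discrepancy appears at order $\Ord+1$. If the identity fails, the argument above still yields the stated error bound together with the inclusion $\ran{\mat{Q}_{\rm{pt}}}(\prmtrVec)\subseteq\ran\mat{P}^{[\Ord]}(\prmtrVec)$. Full equality would then require that $\rank\mat{P}^{[\Ord]}(\prmtrVec) = M$ hold as an extra condition.
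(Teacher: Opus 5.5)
Your error estimate is the paper's own argument. $\mat{Q}_{\rm{pt}}(\prmtrVec)$ is the Riesz projector of the Hermitian matrix $\mat{P}^{[\Ord]}(\prmtrVec)$ over $|\zeta-1|=1/2$. The difference to $\mat{P}(\prmtrVec)$ is written with the second resolvent identity, and one uses $\|(\mat{P}(\prmtrVec)-\zeta\mat{I})^{-1}\|=2$ on a contour of length $\pi$.

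The only difference is in the other resolvent. You bound it explicitly via Weyl by $(1/2-\eta)^{-1}$. The paper instead chooses $\varepsilon$ so that the annulus $1/4\le|z-1|\le 3/4$ contains no eigenvalue of $\mat{P}^{[\Ord]}(\prmtrVec)$, and defines $\delta_{\rm{pt}}$ as the supremum of that resolvent norm over $|\prmtrVec-\prmtrVec_0|\le\varepsilon$. Your factor $2/(1-2\eta)$ depends on $\prmtrVec$, whereas the statement asks for a constant. Take its supremum over the ball, which is at most $4$. As a by-product, your version shows the leading coefficient can even be taken as $2P_{\Ord+1}(0)$, since $\eta\to 0$.

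The step you left open cannot be closed, because the reverse inclusion is false in general. Your diagnosis is right: $\rank\mat{P}^{[\Ord]}(\prmtrVec)=M$ generically fails, with the discrepancy appearing at order $\Ord+1$.

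Take $\Pdim=1$, $\prmtr_0=0$, $\Ord=1$, $M=1$ and
\[
\mat{A}(\prmtr)=\begin{bmatrix}0&\prmtr\\ \prmtr&1\end{bmatrix}.
\]
Then $\mat{P}(0)=\vec{e}_1\vec{e}_1^*$ and $\mat{S}_0=\vec{e}_2\vec{e}_2^*$. Formula \eqref{eq:projector_coefficients} gives $\mat{P}^{(1)}=-\mat{P}(0)\mat{A}^{(1)}\mat{S}_0-\mat{S}_0\mat{A}^{(1)}\mat{P}(0)=-(\vec{e}_1\vec{e}_2^*+\vec{e}_2\vec{e}_1^*)$, so
\[
\mat{P}^{[1]}(\prmtr)=\begin{bmatrix}1&-\prmtr\\ -\prmtr&0\end{bmatrix},\qquad \det\mat{P}^{[1]}(\prmtr)=-\prmtr^2 .
\]
Hence $\ran\mat{P}^{[1]}(\prmtr)=\bb{C}^2$ for every $\prmtr\neq 0$, while $\rank\mat{Q}_{\rm{pt}}(\prmtr)=1$. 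The spurious eigenvalue is $(1-\sqrt{1+4\prmtr^2})/2\approx-\prmtr^2$.

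The same happens for every $\Ord=1$ problem. By idempotency (cf.\ \eqref{eq:projector_recursive_relation}), each $\mat{P}^{(\vec{e}_j)}$ is block off-diagonal with respect to $\mat{P}(\prmtrVec_0)$. Set $\mat{B}\coloneqq\mat{P}(\prmtrVec_0)\,\mat{P}^{[1]}(\prmtrVec)\,\mat{P}(\prmtrVec_0)^\perp$. The Schur complement of $\mat{P}^{[1]}(\prmtrVec)$ is then $-\mat{B}^*\mat{B}$, so $\rank\mat{P}^{[1]}(\prmtrVec)=M+\rank\mat{B}>M$ whenever the first derivative of $\mat{P}$ in direction $\prmtrVec-\prmtrVec_0$ is nonzero.

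The paper's proof does not establish the range identity either. It simply defines $\mat{Q}_{\rm{pt}}(\prmtrVec)$ as the spectral projector for the $M$ eigenvalues of $\mat{P}^{[\Ord]}(\prmtrVec)$ inside $\Gamma_{\!1}$. That yields exactly your inclusion, consistent with the sentence before the lemma ("a part of its range") and with computing $\mat{Q}_{\rm{pt}}$ by a truncated SVD in \Cref{sec:num_exp}.

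So do not add $\rank\mat{P}^{[\Ord]}(\prmtrVec)=M$ as an extra hypothesis, since it fails generically. Instead, weaken the conclusion to $\ran\mat{Q}_{\rm{pt}}(\prmtrVec)\subseteq\ran\mat{P}^{[\Ord]}(\prmtrVec)\subseteq\RBspace_{\!\Ord}(\prmtrVec_0)$. Nothing later in the paper needs more, and with this reading (and a constant $\delta_{\rm{pt}}$) your proof is complete.
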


\begin{proof}
The total projector $\mat{P}(\prmtrVec)$ has eigenvalues equal to $1$ with multiplicity $M $ and eigenvalues equal to $0$ with multiplicity $\fdim - M$ for all $\prmtrVec$ in a sufficiently small neighborhood of $\prmtrVec_0$.
The truncated series $\mat{P}^{[\Ord]}(\prmtrVec)$ can be viewed as a small perturbation of $\mat{P}(\prmtrVec)$ for $\prmtrVec \to \prmtrVec_0$.
Applying the perturbation machinery from previous subsections, 
we know that the eigenvalues of $\mat{P}^{[\Ord]}(\prmtrVec)$ are continous and close to those of $\mat{P}(\prmtrVec)$,
with $M$ of them clustering around $1$ and the rest around $0$.

Let us choose $\varepsilon >0$, 
such that $\mat{P}(\prmtrVec)$ enables the power series expansion \eqref{eq:series_P_Pn}, 
and that the closed annulus $\overline{R_{1/4, 3/4}(1)} \coloneqq \{z\in \bb{C} : 1/4 \leq |z-1| \leq 3/4\}$ contains no eigenvalue of $\mat{P}^{[\Ord]}(\prmtrVec)$ for all $|\prmtrVec - \prmtrVec_0| \leq \varepsilon$.
In particular, the contour $\Gamma_{\!1} \coloneqq \{ z\in \bb{C}:|z-1| = 1/2 \}$ encircles exactly $M$ eigenvalues of $\mat{P}^{[\Ord]}(\prmtrVec)$ for all $|\prmtrVec - \prmtrVec_0| \leq \varepsilon$.
We define ${\mat{Q}_{\rm{pt}}}(\prmtrVec)$ as the orthogonal projector  onto the eigenspace of $\mat{P}^{[\Ord]}(\prmtrVec)$ associated with these $M$ eigenvalues via the contour integral
\begin{align} \label{eq:def_Q_pt}
{\mat{Q}_{\rm{pt}}}(\prmtrVec) 
\,\coloneqq \, 
\frac{-1}{2 \pi \i} \oint_{\Gamma_{\!1}} \big(  \mat{P}^{[\Ord]}(\prmtrVec) - \zeta \, \mat{I} \big)^{-1} \, \d{\zeta}.
\end{align}
Due to results from the previous subsections, ${\mat{Q}_{\rm{pt}}}(\prmtrVec)$ is analytic in the $\varepsilon$-neighborhood of $\prmtrVec_0$ with $\mat{Q}_{\rm{pt}}(\prmtrVec_0) = \mat{P}(\prmtrVec_0)$ and $\rank\mat{Q}_{\rm{pt}}(\prmtrVec) = M$ by construction. 
For the error estimate, we firstly note that 
$$
\mat{P}(\prmtrVec)
=
\frac{-1}{2 \pi \i} \oint_{\Gamma_{\!1}} \big(  \mat{P}(\prmtrVec) - \zeta \, \mat{I} \big)^{-1} \, \d{\zeta},
$$ 
since $\mat{P}(\prmtrVec)$ is an orthogonal projector. 
Besides, applying the resolvent identity
$$
(\mat{A}-\zeta \mat{I})^{-1} - (\mat{B}-\zeta \mat{I})^{-1} = (\mat{A}-\zeta \mat{I})^{-1}(\mat{A}-\mat{B})(\mat{A}-\zeta \mat{I})^{-1}
\quad \text{for } \zeta \notin \sigma(\mat{A}) \cup \sigma(\mat{B}) 
$$
to the two integral representations yield 
\begin{align*}
{\mat{Q}_{\rm{pt}}}(\prmtrVec) - \mat{P}(\prmtrVec)
=\, & \frac{-1}{2 \pi \i} \oint_{\Gamma_{\!1}} \big(  \mat{P}^{[\Ord]}(\prmtrVec) - \zeta \, \mat{I} \big)^{-1} \big( \mat{P}^{[\Ord]}(\prmtrVec) - \mat{P}(\prmtrVec) \big) \big(  \mat{P}(\prmtrVec) - \zeta \, \mat{I} \big)^{-1} \, \d{\zeta} . 
\end{align*}
By taking the spectral norm of both sides, 
using the classical contour integral estimates and the sub-multiplicativity of the spectral norm,
we obtain 
\begin{align*}
\| {\mat{Q}_{\rm{pt}}}(\prmtrVec) - \mat{P}(\prmtrVec) \|
\, \leq \;  
& \frac{\mathrm{length}(\Gamma_{\!1})}{2 \pi} \max_{\zeta \in \Gamma_{\!1}} \| \big(  \mat{P}^{[\Ord]}(\prmtrVec) - \zeta \, \mat{I} \big)^{-1} \| \, \| \mat{P}^{[\Ord]}(\prmtrVec) - \mat{P}(\prmtrVec) \| \, \max_{\zeta \in \Gamma_{\!1}} \| \big(  \mat{P}(\prmtrVec) - \zeta \, \mat{I} \big)^{-1} \| . 
\end{align*}
Clearly  $\mathrm{length}(\Gamma_{\!1}) = \pi$.
Since $\mat{P}(\prmtrVec)$ has only eigenvalues $0$ and $1$ regardless of $\prmtrVec$, 
we see that 
$$
\max_{\zeta \in \Gamma_{\!1}} \| \big(  \mat{P}(\prmtrVec) - \zeta \, \mat{I} \big)^{-1} \| 
\; = 
\max_{\short{ {\lambda \in \sigma(\mat{P}(\prmtrVec)), \; \zeta \in \Gamma_{\!1}}} \full{\substack{\lambda \in \sigma(\mat{P}(\prmtrVec)) \\ \zeta \in \Gamma_{\!1}}}} 
\short{|\lambda - \zeta|^{-1}}\full{\frac{1}{|\lambda - \zeta|}} 
\;= \; 2 
\qquad \text{for all } |\prmtrVec - \prmtrVec_0| \leq \varepsilon.
$$
By the choice of $\varepsilon$, eigenvalues of $\mat{P}^{[\Ord]}(\prmtrVec)$ are not in $\overline{R_{1/4, 3/4}(1)}$ for any $|\prmtrVec - \prmtrVec_0| \leq \varepsilon$, and so 
$$
\delta_\rm{pt}
\coloneqq 
\sup_{|\prmtrVec-\prmtrVec_0|\leq \varepsilon} \max_{\zeta \in \Gamma_{\!1}} \| \big(  \mat{P}^{[\Ord]}(\prmtrVec) - \zeta \, \mat{I} \big)^{-1} \| 
\;= 
\sup_{|\prmtrVec-\prmtrVec_0|\leq \varepsilon} 
\max_{\short{\substack{\zeta \in \Gamma_{\!1} \\ \lambda \in \sigma(\mat{P}^{[\Ord]}(\prmtrVec))}}\full{\substack{\zeta \in \Gamma_{\!1} \\ \lambda \in \sigma(\mat{P}^{[\Ord]}(\prmtrVec))}}} 
\short{|\lambda - \zeta|^{-1}}\full{\frac{1}{|\lambda - \zeta|} }
\in 
(2,4].
$$ 
Collecting the previous facts and using again the absolute convergence of the power series of $\mat{P}(\prmtrVec)$,
we deduce for all $|\prmtrVec - \prmtrVec_0| \leq \varepsilon$ that 
\short{$\| {\mat{Q}_{\rm{pt}}}(\prmtrVec) - \mat{P}(\prmtrVec) \|
\, \leq \,  
\delta_\rm{pt} \| \mat{P}^{[\Ord]}(\prmtrVec) - \mat{P}(\prmtrVec) \|. $}
\full{\begin{align*}
\| {\mat{Q}_{\rm{pt}}}(\prmtrVec) - \mat{P}(\prmtrVec) \|
\; \leq \;  
\delta_\rm{pt} \| \mat{P}^{[\Ord]}(\prmtrVec) - \mat{P}(\prmtrVec) \|. 
\end{align*}}
The desired estimate and the assertion on the leading order term follows from the definition of $P_{n+1}(x)$ in \eqref{eq:series_P_norm}. 
\end{proof}

Next, we investigate the difference between the total Ritz-value sums associated with $\mat{P}(\prmtrVec)$ and $\mat{Q}_{\rm{pt}}(\prmtrVec)$, respectively. 
Using \Cref{lem:closedness_test_space} and \Cref{lem:energy_difference_general},
we can now prove that the corresponding Ritz-value difference is of squared order to the projector error with not much effort.
However,
to establish a clearer leading-order term in the error expansion,
we need additional preparations.

The first preparation concerns the power series $P_{n+1}(x)$ defined in \eqref{eq:series_P_norm}.
Let us extend $P_{n+1}(x)$ as a complex function to  $\{ z \in \bb{C} : |z| \leq \varepsilon \}$,
where $\varepsilon >0$ comes from \Cref{lem:closedness_test_space}. 
Then, we define 
\begin{align}
\|P_{n+1}\|_{\infty, \varepsilon} 
\, \coloneqq \, 
\max_{|z| = \varepsilon} |P_{n+1}(z)|, 
\label{eq:def_P_norm}
\end{align} 
which can be interpreted as a measure of size of the tail of the projector series beyond order $\Ord$.
Note that $\|P_{n+1}\|_{\infty, \varepsilon} \geq P_{n+1}(0)=\sum_{|\multiidx| = \Ord + 1}\|\mat{P}^{(\multiidx)}\|$ due to the maximum modulus principle.

Moreover, we know that $\mat{Q}_{\rm{pt}}(\prmtrVec)$ is analytic in a neighborhood of $\prmtrVec_0$ by \Cref{lem:closedness_test_space}.
Hence, we may write its power series expansion as
\short{$\mat{Q}_{\rm{pt}}(\prmtrVec) 
\,\eqqcolon 
\sum_{\multiidx \in \bb{N}_0^{\Pdim}} (\prmtrVec - \prmtrVec_0)^{\multiidx} \, \mat{Q}^{(\multiidx)}_{\rm{pt}}$}
\full{\begin{align*}
\mat{Q}_{\rm{pt}}(\prmtrVec) 
\,\eqqcolon 
\sum_{\multiidx \in \bb{N}_0^{\Pdim}} (\prmtrVec - \prmtrVec_0)^{\multiidx} \, \mat{Q}^{(\multiidx)}_{\rm{pt}}
\end{align*}}
with coefficient matrices given by the multivariate Cauchy integral formula
\begin{align} \label{eq:coeff_Q_pt}
\mat{Q}^{(\multiidx)}_{\rm{pt}}
\,=\, 
\frac{1}{(2 \pi \i)^{\Pdim}}
\oint_{|\zeta_1| = \varepsilon} \!\! \cdots \oint_{|\zeta_{\Pdim}| = \varepsilon} 
\frac{\mat{Q}_{\rm{pt}}(\prmtrVec_0 + \zeta_1 \vec{e}_1 + \cdots + \zeta_{\Pdim} \vec{e}_{\Pdim})}
{\zeta_1^{\beta_1 + 1} \cdots \zeta_{\Pdim}^{\beta_{\Pdim} + 1}} \, \d{\zeta_1} \cdots \d{\zeta_{\Pdim}}
\end{align}
for each $\multiidx = (\beta_1, \dots, \beta_{\Pdim}) \in \bb{N}_0^{\Pdim}$.
By a similar argument as in \Cref{lem:rank_projector_coefficients}, 
we deduce for an aribtary $\multiidx = (\beta_1, \dots, \beta_{\Pdim}) \in \bb{N}_0^{\Pdim}$ that 
the rank of $\mat{Q}^{(\multiidx)}$ and of $\mat{P}^{(\multiidx)}$ is upper bounded by $M\prod_{j=1}^{\Pdim} (\beta_j + 1)$. 
Then, let us set  
\begin{align} \label{eq:upper_bound_rank_beta}
R_{\rm{pt}}(\Ord) 
\; \coloneqq \; 
\max_{\multiidx \in \bb{N}_0^{\Pdim}, \,  |\multiidx| = \Ord}   
\rank \big(\mat{Q}^{(\multiidx)}_{\rm{pt}}- \mat{P}^{(\multiidx)}\big) 
\quad \text{for } \Ord \in \bb{N}_0.
\end{align}


In addition, we use the coefficients of $\mat{Q}_{\rm{pt}}(\prmtrVec)$ and $\mat{P}(\prmtrVec)$ to define
\begin{align}
A^{(\Ord)}_{\rm{pt}}
\, \coloneqq 
\max_{\multiidx \in \bb{N}_0^{\Pdim}, \,  |\multiidx| = \Ord} 
\| \mat{A}(\prmtrVec_0) \big(\mat{I} - 2\mat{P}(\prmtrVec_0)\big) \big( \mat{Q}^{(\multiidx)}_{\rm{pt}} - \mat{P}^{(\multiidx)} \big) \|
\quad \text{for } \Ord \in \bb{N}_0,
\label{eq:def_A_pt_n}
\end{align}
which can be roughly interpreted as the "energy leakage"  at order $\Ord + 1$ between the two projector series w.r.t. the unperturbed operator.

Finally, note that the number of multi-indices $\multiidx \in \bb{N}_0^{\Pdim}$ with $|\multiidx| = \Ord$ is $\binom{\Ord+\Pdim-1}{\Pdim - 1} \coloneqq \frac{(\Ord+\Pdim-1)!}{\Ord! (\Pdim - 1)!}$.

We are now ready to present the result on the Ritz-value difference.

\begin{theorem}[Total Ritz-value error w.r.t. the truncated projector] \label{thm:energy_difference_truncation}
Unter the setting of \Cref{lem:closedness_test_space},
the Ritz-value difference $\tr \, \mat{A}(\prmtrVec)\big(\mat{Q}_{\rm{pt}}(\prmtrVec)-\mat{P}(\prmtrVec)\big)$ is an analytic function in a neighborhood of $\prmtrVec_0$ 
and its Taylor expansion at point $\prmtrVec_0$ has vanishing coefficients up to order $2\Ord + 1$. 
In particular, we have the asymptotic estimate
\begin{align*}
|\tr \, \mat{A}(\prmtrVec)\big(\mat{Q}_{\rm{pt}}(\prmtrVec)-\mat{P}(\prmtrVec)\big)| 
\, \leq \, 
\bigO\big(|\prmtrVec - \prmtrVec_0|^{2\Ord+2}\big)  
\quad \text{as } \prmtrVec \to \prmtrVec_0,
\end{align*}
and the coefficient of the leading error term is bounded from above by 
\begin{align} \label{eq:leading_term_energy_difference}
2 \, \delta_\rm{pt}^2\, {\|P_{\Ord+1}\|_{\infty, \varepsilon}^2}  \, R_{\rm{pt}}(\Ord+1) \; A^{(\Ord+1)}_{\rm{pt}} \,  \binom{\Ord+\Pdim-1}{\Pdim - 1}^{\!\!2}   
\end{align}
with $\delta_\rm{pt} \in (2,4]$ from \Cref{lem:closedness_test_space}, and $\|P_{\Ord+1}\|_{\infty, \varepsilon}$, $R_{\rm{pt}}(\Ord+1)$ and $A^{(\Ord+1)}_{\rm{pt}}$ defined in \eqref{eq:def_P_norm}, \eqref{eq:upper_bound_rank_beta} and \eqref{eq:def_A_pt_n}, respectively.
\end{theorem}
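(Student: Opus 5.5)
The natural route is to combine the trace identity of \Cref{lem:energy_difference_general} with the projector error bound of \Cref{lem:closedness_test_space}. For each fixed $\prmtrVec$ with $|\prmtrVec-\prmtrVec_0|\le\varepsilon$, both $\mat{P}(\prmtrVec)$ and $\mat{Q}_{\rm{pt}}(\prmtrVec)$ are orthogonal projectors of rank $M$, and $\mat{P}(\prmtrVec)$ commutes with $\mat{A}(\prmtrVec)$. \Cref{lem:energy_difference_general} only used these facts about $\mat{P}$ and the idempotency of $\mat{Q}$, not that $\ran\mat{Q}\subseteq\ran\RBproj$. Hence, with $\projDiff(\prmtrVec)\coloneqq\mat{Q}_{\rm{pt}}(\prmtrVec)-\mat{P}(\prmtrVec)$,
\begin{align*}
\tr\,\mat{A}(\prmtrVec)\projDiff(\prmtrVec) \;=\; \tr\,\mat{A}(\prmtrVec)\big(\mat{I}-2\mat{P}(\prmtrVec)\big)\projDiff(\prmtrVec)^2 .
\end{align*}
Analyticity of the left-hand side is immediate, since $\mat{A}$, $\mat{P}$ and $\mat{Q}_{\rm{pt}}$ are analytic near $\prmtrVec_0$ (\Cref{thm:projector_power_series}, \Cref{rem:multiple_eigenvalues}, \Cref{lem:closedness_test_space}) and the trace is linear.

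For the vanishing of the Taylor coefficients up to order $2\Ord+1$, I would first show that $\projDiff(\prmtrVec)$ has vanishing Taylor coefficients of order $\le\Ord$, i.e.\ $\mat{Q}^{(\multiidx)}_{\rm{pt}}=\mat{P}^{(\multiidx)}$ for $|\multiidx|\le\Ord$. This follows from \Cref{lem:closedness_test_space}: an analytic function bounded by $C|\prmtrVec-\prmtrVec_0|^{\Ord+1}$ has no monomials of degree $\le\Ord$. One can see this by restricting to lines $\prmtrVec_0+t\vec{v}$ and comparing one-variable Taylor coefficients.

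It then follows that $\projDiff^2$ starts at order $2\Ord+2$. Multiplying by the analytic factor $\mat{A}(\mat{I}-2\mat{P})$ and taking the trace keeps this, which gives the $\bigO(|\prmtrVec-\prmtrVec_0|^{2\Ord+2})$ statement.

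For the leading coefficient, the degree-$(2\Ord+2)$ homogeneous part is
\begin{align*}
\sum_{|\multiidxSec_1|=|\multiidxSec_2|=\Ord+1}(\prmtrVec-\prmtrVec_0)^{\multiidxSec_1+\multiidxSec_2}\,\tr\,\mat{A}(\prmtrVec_0)\big(\mat{I}-2\mat{P}(\prmtrVec_0)\big)\projDiff^{(\multiidxSec_1)}\projDiff^{(\multiidxSec_2)},
\end{align*}
where $\projDiff^{(\multiidxSec)}\coloneqq\mat{Q}^{(\multiidxSec)}_{\rm{pt}}-\mat{P}^{(\multiidxSec)}$. Each trace is bounded by $|\tr\,\mat{X}\mat{Y}|\le\rank(\mat{Y})\,\|\mat{X}\|\,\|\mat{Y}\|$. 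I would group the factors so that $\mat{A}(\prmtrVec_0)(\mat{I}-2\mat{P}(\prmtrVec_0))\projDiff^{(\multiidxSec_1)}$ has norm at most $A^{(\Ord+1)}_{\rm{pt}}$ and $\rank\projDiff^{(\multiidxSec_2)}\le R_{\rm{pt}}(\Ord+1)$. This leaves $\|\projDiff^{(\multiidxSec_2)}\|$ to control.

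Next, $\|\projDiff^{(\multiidxSec)}\|$ is bounded via the Cauchy formula \eqref{eq:coeff_Q_pt} applied to $\projDiff$. On the polydisc boundary, the bound $\|\projDiff\|\le\delta_{\rm{pt}}\|\mat{P}^{[\Ord]}-\mat{P}\|$ from the proof of \Cref{lem:closedness_test_space} is extended to complex arguments, which is valid if $\varepsilon$ was chosen small enough for the complexified resolvent estimates. Combined with the definition of $\|P_{\Ord+1}\|_{\infty,\varepsilon}$, this gives $\|\projDiff^{(\multiidxSec)}\|\lesssim\delta_{\rm{pt}}\|P_{\Ord+1}\|_{\infty,\varepsilon}$ up to powers of $\varepsilon$ absorbed by the normalization. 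Summing over the $\binom{\Ord+\Pdim-1}{\Pdim-1}^2$ pairs $(\multiidxSec_1,\multiidxSec_2)$ and bounding $|(\prmtrVec-\prmtrVec_0)^{\multiidxSec_1+\multiidxSec_2}|\le|\prmtrVec-\prmtrVec_0|^{2\Ord+2}$ yields \eqref{eq:leading_term_energy_difference}. The factor $2$ and the second $\delta_{\rm{pt}}$ come from bounding one factor of each pair symmetrically, or from the two terms $\mat{I}$ and $-2\mat{P}$.

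The main obstacle is this coefficient bound. Here the real estimate of \Cref{lem:closedness_test_space} must be transferred to complex $\prmtrVec$ on the Cauchy torus, and the constants must be tracked so that exactly $\delta_{\rm{pt}}^2\|P_{\Ord+1}\|_{\infty,\varepsilon}^2$ appears without stray powers of $\varepsilon$. I would handle it by redoing the contour-integral argument of \Cref{lem:closedness_test_space} verbatim for complex perturbations, for which resolvent bounds still hold by continuity of the spectrum. If needed, I would shrink $\varepsilon$ and use \eqref{eq:def_P_norm} in place of the real series.
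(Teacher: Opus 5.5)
Up to the leading homogeneous part your argument coincides with the paper's and is sound. \Cref{lem:energy_difference_general} indeed never uses $\ran\mat{Q}\subseteq\ran\RBproj$. The coefficients $\mat{Q}^{(\multiidx)}_{\rm{pt}}-\mat{P}^{(\multiidx)}$ vanish for $|\multiidx|\le\Ord$, so the Ritz-value difference starts at order $2\Ord+2$. The gap is the constant. With your grouping $\mat{X}=\mat{A}(\prmtrVec_0)\big(\mat{I}-2\mat{P}(\prmtrVec_0)\big)\projDiff^{(\multiidxSec_1)}$, $\mat{Y}=\projDiff^{(\multiidxSec_2)}$, what you actually prove per pair is $R_{\rm{pt}}(\Ord+1)\,A^{(\Ord+1)}_{\rm{pt}}\,\delta_{\rm{pt}}\|P_{\Ord+1}\|_{\infty,\varepsilon}$. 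That is a single factor $\delta_{\rm{pt}}\|P_{\Ord+1}\|_{\infty,\varepsilon}$ and no factor $2$. Your closing sentence does not produce the missing factor $2\delta_{\rm{pt}}\|P_{\Ord+1}\|_{\infty,\varepsilon}$. The reflector $\mat{I}-2\mat{P}(\prmtrVec_0)$ already sits inside $A^{(\Ord+1)}_{\rm{pt}}$, and no symmetrisation creates a third factor of the size of the order-$(\Ord+1)$ coefficients of $\projDiff$. Your estimate implies the displayed one only if $2\delta_{\rm{pt}}\|P_{\Ord+1}\|_{\infty,\varepsilon}\ge 1$.

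The paper reaches the displayed form differently. It inserts the orthogonal projector $\mat{W}$ onto $\ran\big(\mat{Q}^{(\multiidxSec_1)}_{\rm{pt}}-\mat{P}^{(\multiidxSec_1)}\big)$ and applies a three-factor trace bound to $\mat{M}_1=\mat{A}(\prmtrVec_0)\big(\mat{I}-2\mat{P}(\prmtrVec_0)\big)\mat{W}$ and the two coefficient differences. It bounds each difference by $\delta_{\rm{pt}}\|P_{\Ord+1}\|_{\infty,\varepsilon}$ and asserts $\|\mat{M}_1\|\le A^{(\Ord+1)}_{\rm{pt}}$. That assertion does not follow from \eqref{eq:def_A_pt_n}, which contains the small factor $\mat{Q}^{(\multiidxSec_1)}_{\rm{pt}}-\mat{P}^{(\multiidxSec_1)}$ while $\mat{M}_1$ does not. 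The displayed constant therefore carries three factors of the size of the order-$(\Ord+1)$ coefficients, whereas the true coefficient is bilinear in them, and it is not a valid bound in general.

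A counterexample: take $\Pdim=1$, $\Ord=0$, $\prmtr_0=0$ and $\mat{A}(\prmtr)=\bigl[\begin{smallmatrix}0 & t\prmtr\\ t\prmtr & 1\end{smallmatrix}\bigr]$. Then $\mat{Q}_{\rm{pt}}\equiv\mat{P}(0)$ (as $\mat{P}^{[0]}\equiv\mat{P}(0)$), and the Ritz-value difference is $\tfrac12\big(\sqrt{1+4t^2\prmtr^2}-1\big)=t^2\prmtr^2+\bigO(\prmtr^4)$. Further, $\mat{Q}^{(1)}_{\rm{pt}}-\mat{P}^{(1)}=t\bigl[\begin{smallmatrix}0 & 1\\ 1 & 0\end{smallmatrix}\bigr]$, $R_{\rm{pt}}(1)=2$, $A^{(1)}_{\rm{pt}}=t$ and $\delta_{\rm{pt}}\le 4$. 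For fixed $\varepsilon$ (e.g.\ $\varepsilon=1$, admissible for $t<1/2$), $\|P_1\|_{\infty,\varepsilon}=t+\bigO(t^2)$. The displayed constant is then at most $64t^3\big(1+\bigO(t)\big)$, which is below the true coefficient $t^2$ for small $t$.

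So this step cannot be closed. What you can prove is your own two-factor bound, summed over $\binom{\Ord+\Pdim}{\Pdim-1}^2$ pairs. There are $\binom{\Ord+\Pdim}{\Pdim-1}$ multi-indices of length $\Ord+1$; $\binom{\Ord+\Pdim-1}{\Pdim-1}$ counts those of length $\Ord$.

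Your complexification worry is also justified, and the paper passes over it as well. On the Cauchy torus, $\mat{P}(\prmtrVec)$ is a non-orthogonal idempotent and $\mat{P}^{[\Ord]}(\prmtrVec)$ is non-normal. Hence $\|(\mat{P}(\prmtrVec)-\zeta\mat{I})^{-1}\|=2$ on $\Gamma_{\!1}$ no longer holds. The constant in $\|\projDiff\|\le\delta_{\rm{pt}}\|\mat{P}^{[\Ord]}-\mat{P}\|$ must be recomputed as a supremum over the complex polydisc.
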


This upper bound for the error coefficient suggests that the dominating contribution to the Ritz-value difference comes from
the "energy leakage"  at order $\Ord + 1$ between the two projector series w.r.t. the unperturbed operator,
scaled at most by 
(i) the maximal rank of the differences of the coefficient matrices at order $\Ord + 1$,
(ii) the size of the tail of the projector series beyond order $\Ord$ as a holomorphic function, and 
(iii) the number of multi-indices at order $\Ord + 1$.

\begin{proof}
Set ${\projDiff}_{\rm{pt}}(\prmtrVec) \coloneqq \mat{Q}_{\rm{pt}}(\prmtrVec) - \mat{P}(\prmtrVec)$.
Since $\mat{Q}_{\rm{pt}}(\prmtrVec)$ is analytic in a neighborhood of $\prmtrVec_0$ by \Cref{lem:closedness_test_space},
the Ritz-value difference $\tr \, \mat{A}(\prmtrVec){\projDiff}_{\rm{pt}}(\prmtrVec) = \tr \, \mat{A}(\prmtrVec)\big(\mat{Q}_{\rm{pt}}(\prmtrVec)-\mat{P}(\prmtrVec)\big)$ is also analytic.

Also observe from the asymptotic order $\| {\mat{Q}}_{\rm{pt}}(\prmtrVec) - \mat{P}(\prmtrVec) \| \in \bigO(|\prmtrVec - \prmtrVec_0|^{\Ord+1})$ 
for $\prmtrVec \to \prmtrVec_0$ in \Cref{lem:closedness_test_space} that $\mat{Q}^{(\multiidx)}_{\rm{pt}} = \mat{P}^{(\multiidx)}$ for all $|\multiidx| \leq \Ord$.
Hence, ${\projDiff}_{\rm{pt}}(\prmtrVec) \coloneqq \mat{Q}_{\rm{pt}}(\prmtrVec) - \mat{P}(\prmtrVec)$ has the power series
\begin{align*}
{\projDiff}_{\rm{pt}}(\prmtrVec) 
\,=\, 
\sum_{|\multiidx| \geq \Ord + 1} (\prmtrVec - \prmtrVec_0)^{\multiidx} \, \big( \mat{Q}^{(\multiidx)}_{\rm{pt}} - \mat{P}^{(\multiidx)} \big).
\end{align*}
Applying  \Cref{lem:energy_difference_general} yields 
\short{$\tr\,  \mat{A}(\prmtrVec){\projDiff}_{\rm{pt}}(\prmtrVec)
=
\tr \,  \mat{A}(\prmtrVec) \big(\mat{I} - 2\mat{P}(\prmtrVec)\big) {\projDiff}_{\rm{pt}}(\prmtrVec)^{\!2}.$}
\full{\begin{align*}
\tr\,  \mat{A}(\prmtrVec){\projDiff}_{\rm{pt}}(\prmtrVec)
\,=\,
\tr \,  \mat{A}(\prmtrVec) \big(\mat{I} - 2\mat{P}(\prmtrVec)\big) {\projDiff}_{\rm{pt}}(\prmtrVec)^{\!2}.
\end{align*}}
Hence, the Taylor expansion of $\tr \, \mat{A}(\prmtrVec){\projDiff}_{\rm{pt}}(\prmtrVec)$ at point $\prmtrVec_0$ has vanishing coefficients up to order $2\Ord + 1$,
and its leading order terms are given by 
\begin{align*}
T 
\;\coloneqq  \sum_{|\multiidxSec_1| =|\multiidxSec_2| = \Ord + 1}  
(\prmtrVec - \prmtrVec_0)^{\multiidxSec_1 + \multiidxSec_2} \,
\tr  \mat{A}(\prmtrVec_0) \big(\mat{I} - 2\mat{P}(\prmtrVec_0)\big) \big( \mat{Q}^{(\multiidxSec_1)}_{\rm{pt}} - \mat{P}^{(\multiidxSec_1)} \big) \big( \mat{Q}_{\rm{pt}}^{(\multiidxSec_2)} - \mat{P}^{(\multiidxSec_2)} \big).
\end{align*}
Since it holds by triangle inequality that 
\begin{align*}
|T| 
\; \leq \;  
|\prmtrVec - \prmtrVec_0|^{2\Ord+2} \!\!\!\!
\sum_{|\multiidxSec_1| =|\multiidxSec_2| = \Ord + 1}  
|\tr  \mat{A}(\prmtrVec_0) \big(\mat{I} - 2\mat{P}(\prmtrVec_0)\big) \big( \mat{Q}^{(\multiidxSec_1)}_{\rm{pt}} - \mat{P}^{(\multiidxSec_1)} \big) \big( \mat{Q}_{\rm{pt}}^{(\multiidxSec_2)} - \mat{P}^{(\multiidxSec_2)} \big)|,
\end{align*}
and there are $\binom{\Ord+\Pdim-1}{\Pdim - 1}^2$ such terms in the summation,
it remains to show that 
\begin{align*}
|\tr \,  \mat{A}(\prmtrVec_0) \big(\mat{I} - 2\mat{P}(\prmtrVec_0)\big) \big( \mat{Q}^{(\multiidxSec_1)}_{\rm{pt}} - \mat{P}^{(\multiidxSec_1)} \big) \big( \mat{Q}_{\rm{pt}}^{(\multiidxSec_2)} - \mat{P}^{(\multiidxSec_2)} \big)|
\, \leq \,
2 \,\delta_\rm{pt}^2 R_{\rm{pt}}(\Ord+1) \, A^{(\Ord+1)}_{\rm{pt}} \, {\|P_{\Ord+1}\|_{\infty, \varepsilon}^2} 
\end{align*}
for arbitrary multi-indices $\multiidxSec_1, \multiidxSec_2$ with $|\multiidxSec_1| = |\multiidxSec_2| = \Ord + 1$.

To this end, note that 
\begin{align} \label{eq:general_trace_ineq}
|\tr \, \mat{M}_1\mat{M}_2\mat{M}_3| 
\,\leq \, 
(\rank\, \mat{M}_1\mat{M}_2\mat{M}_3) \, \| \mat{M}_1 \| \, \| \mat{M}_2 \| \, \| \mat{M}_3 \|
\quad \text{for any matrices } \mat{M}_1, \mat{M}_2, \mat{M}_3.
\end{align}
By setting $\mat{W}$ to be the orthogonal projector onto the range of $\mat{Q}^{(\multiidxSec_1)}_{\rm{pt}} - \mat{P}^{(\multiidxSec_1)}$,
we apply \eqref{eq:general_trace_ineq} to 
\begin{align*}
\mat{M}_1 \coloneqq \mat{A}(\prmtrVec_0) \big(\mat{I} - 2\mat{P}(\prmtrVec_0)\big) \mat{W}, \quad
\mat{M}_2 \coloneqq  \mat{Q}^{(\multiidxSec_1)}_{\rm{pt}} - \mat{P}^{(\multiidxSec_1)} , \quad
\mat{M}_3 \coloneqq \mat{Q}_{\rm{pt}}^{(\multiidxSec_2)} - \mat{P}^{(\multiidxSec_2)} .
\end{align*}
Also note that $\rank\, \mat{M}_1\mat{M}_2\mat{M}_3 \leq \rank\, \mat{M}_2 \leq 2 R_\rm{pt}(\Ord+1)$
and $\|\mat{M}_1\| \leq A^{(\Ord+1)}_\rm{pt}$ by the definitions in~\eqref{eq:upper_bound_rank_beta} and in \eqref{eq:def_A_pt_n}, respectively.
For the norms of $\mat{M}_2$ and $\mat{M}_3$,
we apply the multivariate Cauchy integral formula to ${\projDiff}_{\rm{pt}}(\prmtrVec)$ and obtain
\begin{align*}
\| \mat{Q}^{(\multiidx)}_{\rm{pt}} - \mat{P}^{(\multiidx)} \| 
\, = \,
\left\| \frac{1}{(2 \pi \i)^{\Pdim}} \oint_{|\zeta_1| = \varepsilon} \!\! \cdots \oint_{|\zeta_{\Pdim}| = \varepsilon} 
\frac{{\projDiff}_{\rm{pt}}(\prmtrVec_0 + \zeta_1 \vec{e}_1 + \cdots + \zeta_{\Pdim} \vec{e}_{\Pdim})}{(\zeta_1 - \prmtr_{0,1})^{\beta_1 + 1} \cdots (\zeta_{\Pdim} - \prmtr_{0,\Pdim})^{\beta_{\Pdim} + 1}} \, \d{\zeta_1} \cdots \d{\zeta_{\Pdim}} \right\|
\end{align*}
for $\prmtrVec_0 = (\mu_{0,1}, \dots, \mu_{0,\Pdim})$ and $\multiidx = (\beta_1, \dots, \beta_{\Pdim})$ with $|\multiidx| = \Ord + 1$.
By taking the spectral norm inside the integral, 
observing $\| {\mat{Q}}_{\rm{pt}}(\prmtrVec) - \mat{P}(\prmtrVec) \|
\leq 
\delta_\rm{pt} \| \mat{P}^{[\Ord]}(\prmtrVec) - \mat{P}(\prmtrVec) \|$ as in the proof of \Cref{lem:closedness_test_space}
and using the definition of $\|P_{n+1}\|_{\infty, \varepsilon}$ in~\eqref{eq:def_P_norm}, 
we arrive at
\begin{align*}
\| \mat{Q}^{(\multiidx)}_{\rm{pt}} - \mat{P}^{(\multiidx)} \| 
\; \leq \;
\frac{1}{(2 \pi)^{\Pdim}} \frac{2\pi \varepsilon}{\varepsilon^{\beta_1+1}} \cdots \frac{2\pi \varepsilon}{\varepsilon^{\beta_\Pdim+1}}  \!\!
\sup_{ \short{{ |\tilde{\prmtrVec} - \prmtrVec_0| = \varepsilon}}\full{\substack{\tilde{\prmtrVec} \in \bb{C}^\Pdim \\ |\tilde{\prmtrVec} - \prmtrVec_0| = \varepsilon}} } \!\!\! \delta_\rm{pt} \| \mat{P}^{[\Ord]}(\tilde{\prmtrVec}) - \mat{P}(\tilde{\prmtrVec}) \|
\; \leq \;
\delta_\rm{pt} {\|P_{n+1}\|_{\infty, \varepsilon}}.
\end{align*}
Collecting the previous estimates completes the proof.
\end{proof}

To summarize, \Cref{subsec:truncated_projector_series} suggests that it is a good idea 
to perform spectral approximations  
within the range of the truncated projector series.
Hence, it would be beneficial to reduce the approximation problem into a lower-dimensional subspace containing the range of $\mat{P}^{[\Ord]}(\prmtrVec)$.
This motivates the Taylor-RBM from another perspective.

\section{Taylor-Reduced Basis Method}\label{sec:Taylor_rbm}

In this section, we finally present and analyze the Taylor-reduced basis method for approximating the invariant subspace and the associated Ritz-value sum of a parameter-dependent matrix $\mat{A}(\prmtrVec)$.
We also provide some implementation considerations for the method.

Before that, let us clarify and recall the notations and the problem setting.

For a \emph{general} orthogonal projector $\mat{Q}=\mat{Q}^2 = \mat{Q}^*$, 
we denote by $\mat{A} \!\!\upharpoonright_\mat{Q} \!\! (\prmtrVec) \coloneqq \mat{Q}\mat{A} (\prmtrVec)|_{\ran\,\mat{Q}}$ the restriction of $\mat{Q}\mat{A}(\prmtrVec)$ onto the range of $\mat{Q}$. 
We will also use the notation $\mat{Q}^\perp(\prmtrVec) \coloneqq \mat{I} - \mat{Q}(\prmtrVec)$ for the orthogonal projector onto the orthogonal complement of $\ran \, \mat{Q}(\prmtrVec)$.

Given a fixed parameter value $\prmtrVec_0 \in \prmtrSet$, 
let us denote by 
\begin{align*} 
\EigVal_1(\prmtrVec_0) < \dots < \EigVal_K(\prmtrVec_0) < \EigVal_{K+1}(\prmtrVec_0) < \dots 
\end{align*}
the \emph{distinct} eigenvalues of $\mat{A}(\prmtrVec_0)$ arranged in increasing order,
each with multiplicity $m_1, m_2, \dots$, respectively.
Denote by  $M \coloneqq m_1 + \dots + m_K$ the total multiplicity of the first $K$ distinct eigenvalues.
Let $\mat{P}(\prmtrVec_0)$ be the total projector onto the invariant subspace associated with the first $K$ distinct eigenvalues $\EigVal_1 \coloneqq \EigVal_1(\prmtrVec_0), \dots, \EigVal_K \coloneqq \EigVal_K(\prmtrVec_0)$, 
i.e., $\mat{P}(\prmtrVec_0)$ satisfies~\eqref{eq:property_P0}. 
We are interested in approximating the invariant subspace $\mat{P}(\prmtrVec)$ as well as the Ritz-value sum $\tr \, \mat{A}(\prmtrVec)\mat{P}(\prmtrVec)$ for $\prmtrVec$ in a neighborhood of $\prmtrVec_0$ using subspace methods.




\subsection{The Taylor-RBM and its convergence analysis}
From \Cref{sec:analyticity}, 
we know that $\mat{P}(\prmtrVec)$ admits a convergent power series expansion around $\prmtrVec_0$. 
Together with \Cref{thm:Cea_EVP} and \Cref{lem:closedness_test_space},
we are motivated to perform the approximation within a finite dimensional space containing reasonable information  of the Taylor coefficients of $\mat{P}(\prmtrVec)$.

\begin{definition}[Taylor reduced basis method] \label{def:taylor_rbm}
Consider the setting from the beginning of \Cref{sec:Taylor_rbm}. 
For a given order $\Ord \in \bb{N}_0$, 
let $\mat{P}^{[\Ord]}(\prmtrVec)$ be the $\Ord$-th order Taylor approximation of $\mat{P}(\prmtrVec)$ around $\prmtrVec_0$ as in~\eqref{eq:series_P_Pn}.
\begin{enumerate}
\item 
The \emph{Taylor reduced basis space (Taylor-RB space)} w.r.t. $\prmtrVec_0$ of order $\Ord$ is defined as
\begin{align*}
\RBspace_{\!\Ord} 
\, \coloneqq \,  
\RBspace_{\!\Ord}(\prmtrVec_0)
\, \coloneqq \,
\mathrm{span} \big\{ \ran \, \mat{P}^{(\multiidx)} : |\multiidx| \leq \Ord \big\} \subseteq \bb{C}^\fdim.
\end{align*}
The corresponding orthogonal projector onto $\RBspace_{\!\Ord}(\prmtrVec_0)$ is denoted by ${\RBproj}_{\Ord}$ or ${\RBproj}_{\Ord}(\prmtrVec_0)$.
\item 
Let $r \coloneqq \dim \RBspace_{\!\Ord}(\prmtrVec_0)$ be the dimension of the Taylor-RB space $\RBspace_{\!\Ord}(\prmtrVec_0)$,
and $\mat{V}_{\mkern -5mu \Ord} \in \bb{C}^{\fdim \times r}$ a matrix whose columns form an ONB of $\RBspace_{\!\Ord}(\prmtrVec_0)$.
The matrix  
$$
\mat{A}^{\!\!\mat{V}_{\mkern -5mu \Ord}}(\prmtrVec) 
\, \coloneqq \, 
\mat{V}_{\mkern -5mu\Ord}^* \mat{A}(\prmtrVec) \mat{V}_{\mkern -5mu \Ord} \in \bb{C}^{r \times r} 
$$
is called the \emph{reduced order model (ROM)} of $\mat{A}(\prmtrVec)$ w.r.t.\! the Taylor-RB space $\RBspace_{\Ord}(\prmtrVec_0)$.
\item 
For $\prmtrVec \in \prmtrSet$ in a neighborhood of $\prmtrVec_0$, 
let $\mat{Q}^{\mat{V}_{\mkern -5mu \Ord}}(\prmtrVec) \in \bb{C}^{\rdim \times \rdim}$ be a solution of the reduced problem
\begin{equation} \label{eq:taylor_rbm_reduced_problem_concrete}
\mat{Q}^{\mat{V}_{\mkern -5mu \Ord}}(\prmtrVec) 
\ \text{ solves}
\argmin_{\substack{\mat{M} = \mat{M}^2 = \mat{M}^* \in \bb{C}^{\rdim \times \rdim}  \\ \tr \, \mat{M} = M}} 
\tr \, \mat{A}^{\!\! \mat{V}_{\mkern -5mu\Ord}}(\prmtrVec) \, \mat{M}, 
\end{equation}
and set $\mat{Q}_{\rm{rb}}(\prmtrVec) \coloneqq \mat{V}_{\mkern -5mu \Ord} \mat{Q}^{\mat{V}_{\mkern -5mu \Ord}}(\prmtrVec) \mat{V}_{\mkern -5mu \Ord}^* \in \bb{C}^{\fdim \times \fdim}$.
The matrix $\mat{Q}_{\rm{rb}}(\prmtrVec)$ is called the \emph{Taylor reduced basis approximation (Taylor-RB approximation)} of $\mat{P}(\prmtrVec)$ w.r.t.\! $\RBspace_{\!\Ord}(\prmtrVec_0)$.
\end{enumerate}
\end{definition}

Note that \eqref{eq:taylor_rbm_reduced_problem_concrete} is a practical formulation, 
and it is equivalent to 
\begin{equation*} 
\mat{Q}_{\rm{rb}}(\prmtrVec) 
\ \text{ solves}
\argmin_{\substack{\mat{M} = \mat{M}^2 = \mat{M}^* \in \bb{C}^{\fdim \times \fdim}  \\ \tr \, \mat{M} = M \\ \ran \, \mat{M} \subseteq \RBspace_{\!\Ord}(\prmtrVec_0)}} 
\tr \, \mat{A}(\prmtrVec) \,\mat{M}.
\end{equation*}

We should firstly clarify if $\mat{Q}_{\rm{rb}}(\prmtrVec)$ can be determined uniquely for $\prmtrVec$ near $\prmtrVec_0$.

\begin{lemma}[Well-posedness of the Taylor-RBM] \label{lem:taylor_rbm_well_posedness}
Under the setting from the beginning of \Cref{sec:Taylor_rbm} and from \Cref{def:taylor_rbm},
there exists a small neighborhood of $\prmtrVec_0$ such that for all $\prmtrVec$ in this neighborhood,
the reduced problem defining $\mat{Q}_{\rm{rb}}(\prmtrVec)$ admits a unique solution.
\end{lemma}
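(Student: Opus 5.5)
The plan is to reduce uniqueness of the reduced minimizer to a spectral gap condition for the compression $\mat{A}\!\!\upharpoonright_{\RBproj_\Ord}(\prmtrVec)$. By Ky-Fan's minimum principle applied to the Hermitian matrix $\mat{A}^{\!\!\mat{V}_{\mkern -5mu\Ord}}(\prmtrVec)\in\bb{C}^{\rdim\times\rdim}$, the minimizer of \eqref{eq:taylor_rbm_reduced_problem_concrete} is unique if and only if the $M$-th and $(M+1)$-st smallest eigenvalues (counted with multiplicity) of $\mat{A}^{\!\!\mat{V}_{\mkern -5mu\Ord}}(\prmtrVec)$ are distinct. If $\rdim = M$, the only admissible $\mat{M}$ is $\mat{I}_\rdim$, and uniqueness is trivial. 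So I assume $\rdim>M$ and show a positive gap $\hat\lambda_{M+1}(\prmtrVec)-\hat\lambda_M(\prmtrVec)>0$ for $\prmtrVec$ near $\prmtrVec_0$.

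\textbf{Step 1: the gap at $\prmtrVec_0$.} Since $\mat{P}^{(\vec 0)}=\mat{P}(\prmtrVec_0)$, we have $\ran\mat{P}(\prmtrVec_0)\subseteq\RBspace_{\!\Ord}$. Because $\mat{P}(\prmtrVec_0)$ commutes with $\mat{A}(\prmtrVec_0)$, the space $\ran\mat{P}(\prmtrVec_0)$ is invariant under $\RBproj_\Ord\mat{A}(\prmtrVec_0)\RBproj_\Ord$, and so is its complement within $\RBspace_{\!\Ord}$. Hence the compression splits orthogonally.
\begin{itemize}
\item On $\ran\mat{P}(\prmtrVec_0)$ it has eigenvalues $\EigVal_1,\dots,\EigVal_K$, with total multiplicity $M$ and all at most $\EigVal_K$.
\item On $\RBspace_{\!\Ord}\cap\ran\mat{P}(\prmtrVec_0)^\perp$ it is a compression of $\mat{A}(\prmtrVec_0)\!\!\upharpoonright_{\mat{P}(\prmtrVec_0)^\perp}$. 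Its Rayleigh quotients are therefore at least $\EigVal_{K+1}$, so all its eigenvalues are at least $\EigVal_{K+1}$.
\end{itemize}
It follows that $\hat\lambda_M(\prmtrVec_0)\le\EigVal_K<\EigVal_{K+1}\le\hat\lambda_{M+1}(\prmtrVec_0)$, which gives a gap $g_0\ge\EigVal_{K+1}-\EigVal_K>0$. This is the crucial point: unlike \Cref{ex:rayleigh_ritz_pathological}, no spurious Ritz value can enter the gap, precisely because $\ran\mat{P}(\prmtrVec_0)$ lies inside the space.

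\textbf{Step 2: continuity.} The space $\RBspace_{\!\Ord}$ and the matrix $\mat{V}_{\mkern -5mu\Ord}$ are fixed, independent of $\prmtrVec$. Hence $\prmtrVec\mapsto\mat{A}^{\!\!\mat{V}_{\mkern -5mu\Ord}}(\prmtrVec)$ is analytic, and in particular continuous. By Weyl's inequality, each ordered eigenvalue satisfies $|\hat\lambda_j(\prmtrVec)-\hat\lambda_j(\prmtrVec_0)|\le\|\mat{A}(\prmtrVec)-\mat{A}(\prmtrVec_0)\|$. Choose a neighborhood on which $\|\mat{A}(\prmtrVec)-\mat{A}(\prmtrVec_0)\|<g_0/2$; this is possible by \eqref{eq:real_analytic_system}. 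On that neighborhood the gap persists, so the minimizer $\mat{Q}^{\mat{V}_{\mkern -5mu\Ord}}(\prmtrVec)$ is unique. It equals the spectral projector onto the $M$ lowest eigenvalues, and it also depends analytically on $\prmtrVec$ via the contour representation of \Cref{subsec:analyticity_total_proj}.

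\textbf{Main obstacle.} I expect the main obstacle to be Step 1. One must argue cleanly that the complementary block cannot produce Ritz values below $\EigVal_{K+1}$. This needs the orthogonal splitting of $\RBspace_{\!\Ord}$ into $\ran\mat{P}(\prmtrVec_0)$ and its orthogonal complement inside $\RBspace_{\!\Ord}$, together with the invariance of both parts under the compression. The rest is standard perturbation theory.
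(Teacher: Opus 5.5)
Your proof is correct and follows the same two-step structure as the paper. First, the compression has a spectral gap at $\prmtrVec_0$, forced by $\ran\mat{P}(\prmtrVec_0)\subseteq\RBspace_{\!\Ord}$; second, this gap persists for $\prmtrVec$ near $\prmtrVec_0$.

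The only difference is in how each step is justified. You make both steps explicit and quantitative: a Rayleigh-quotient bound gives a gap of at least $\EigVal_{K+1}-\EigVal_K$, and Weyl's inequality sizes the neighborhood. The paper instead deduces the gap from uniqueness of the variational minimizer $\mat{Q}_{\rm{rb}}(\prmtrVec_0)=\mat{P}(\prmtrVec_0)$. It then invokes the analytic perturbation theory of \Cref{sec:analyticity}, which also yields the analyticity of $\mat{Q}_{\rm{rb}}(\prmtrVec)$ used later.
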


\begin{proof}
We should look at the compression $\mat{A}\!\!\upharpoonright_{{\RBproj}_\Ord} \!\! (\prmtrVec)$,  
and we firstly consider the case $\prmtrVec = \prmtrVec_0$.

Since $\ran\,\mat{P}(\prmtrVec_0) = \ran \, \mat{P}^{(\vec{0})} \subseteq \RBspace_{\!\Ord}(\prmtrVec_0)$, $\EigVal_K(\prmtrVec_0) < \EigVal_{K+1}(\prmtrVec_0)$ and we are in the variational setting, 
$\mat{Q}_{\rm{rb}}(\prmtrVec_0)$ is uniquely determined with $\mat{Q}_{\rm{rb}}(\prmtrVec_0) = \mat{P}(\prmtrVec_0)$.    
This also implies that $\mat{Q}_\rm{rb}(\prmtrVec_0)$ and ${\RBproj}_\Ord \mat{Q}^\perp_\rm{rb}(\prmtrVec_0)$ are spectral projectors of $\mat{A}\!\!\upharpoonright_{{\RBproj}_\Ord} \!\!\!(\prmtrVec_0)$.
In patricular, 
the distance between the eigenvalues of $\mat{A}\!\!\upharpoonright_{\mat{Q}_\rm{rb}(\prmtrVec_0)}\!\!(\prmtrVec_0)$ and those of $\mat{A}\!\!\upharpoonright_{{\RBproj}_\Ord \mat{Q}^\perp_\rm{rb}(\prmtrVec_0) }\!\!(\prmtrVec_0)$ is positive.

The previous considerations and the analyticity of $\mat{A}\!\!\!\upharpoonright_{{\RBproj}_\Ord} \!\! (\prmtrVec)$ in $\prmtrVec$ enable us to apply the perturbation results from \Cref{sec:analyticity}.
Consequently, 
the total eigenprojector associated with the first $M$ eigenvalues of $\mat{A}\!\!\upharpoonright_{{\RBproj}_\Ord} \!\! (\prmtrVec)$ varies analytically with $\prmtrVec$ in a small neighborhood of $\prmtrVec_0$, 
and this is exactly $\mat{Q}_{\rm{rb}}(\prmtrVec)$ (after suitable embedding from $\RBspace_{\!\Ord}(\prmtrVec_0)$ to $\bb{C}^\fdim$).
Hence, $\mat{Q}_{\rm{rb}}(\prmtrVec)$ is uniquely determined for all $\prmtrVec$ in this neighborhood.
\end{proof}

Next, we analyze how well does $\mat{Q}_{\rm{rb}}(\prmtrVec)$ approximate $\mat{P}(\prmtrVec)$ for $\prmtrVec$ in a small neighborhood of $\prmtrVec_0$.
To this end, let us introduce some auxiliary quantities. 
Firstly, we denote  
\begin{align} \label{eq:def_residual_pi_n}
\rm{Res}_{}^2\!(\prmtrVec,{\RBproj}_\Ord)
\, \coloneqq \,
\rm{Res}_{}^2\!(\mat{A}(\prmtrVec),{\RBproj}_\Ord)
\, = \,
\| {\RBproj}_\Ord \mat{A}(\prmtrVec) {\RBproj}_\Ord^\perp \|^2 
,
\end{align}
which was introduced in \Cref{thm:Cea_EVP} as the squared residual of the \emph{entire} subspace $\RBspace_{\!\Ord}(\prmtrVec_0)$ w.r.t. $\mat{A}(\prmtrVec)$.
Recall that this quantity measures how far is the \emph{entire} Taylor-RB space $\RBspace_{\!\Ord}(\prmtrVec_0)$ from an invariant subspace of $\mat{A}(\prmtrVec)$.
Also observe that $\rm{Res}_{}^2\!(\prmtrVec,{\RBproj}_\Ord)$ is a non-negative continuous function in~$\prmtrVec$ by \Cref{rem:individual_eigenvalues}.
Moreover, 
we know from \Cref{lem:taylor_rbm_well_posedness} that $\mat{Q}_{\rm{rb}}(\prmtrVec)$ is well-defined for $\prmtrVec$ in a small neighborhood of $\prmtrVec_0$ with $\mat{Q}_\rm{rb}(\prmtrVec_0) = \mat{P}(\prmtrVec_0)$.
Since $\EigVal_K(\prmtrVec_0) < \EigVal_{K+1}(\prmtrVec_0)$ and individual eigenvalues depend continuously on the parameter by \Cref{rem:individual_eigenvalues},
the quantity 
\begin{align} \label{eq:def_gamma}
\gamma(\prmtrVec) 
\, \coloneqq \, 
\min \Big\{ |\EigVal - \hat{\EigVal}| : \lambda \in \sigma\big(\mat{A}\!\!\upharpoonright_{\mat{P}(\prmtrVec)}\!\!(\prmtrVec)\big) , \hat{\lambda} \in \sigma\big( \mat{A}\!\!\upharpoonright_{{\RBproj}_\Ord \mat{Q}^\perp_\rm{rb}(\prmtrVec)}\!\!(\prmtrVec) \big) \Big\}
\end{align}
is positive and continuous in a possibly smaller neighborhood of $\prmtrVec_0$. 
Using the continuity of $\rm{Res}_{}^2\!(\prmtrVec,{\RBproj}_\Ord)$ and of $\gamma(\prmtrVec)$ at $\prmtrVec_0$, 
we also deduce that 
\begin{align} \label{eq:res_gamma_continuity}
\sqrt{ 1 + \short{\gamma(\prmtrVec)^{-2} \rm{Res}_{}^2\!(\prmtrVec,{\RBproj}_\Ord) } \full{\frac{ \rm{Res}_{}^2\!(\prmtrVec,{\RBproj}_\Ord) }{\gamma(\prmtrVec)^2} }} 
\, = \, 
\sqrt{1 + \short{\gamma(\prmtrVec_0)^{-2} \rm{Res}_{}^2\!(\prmtrVec_0,{\RBproj}_\Ord) } \full{\frac{ \rm{Res}_{}^2\!(\prmtrVec_0,{\RBproj}_\Ord) }{\gamma(\prmtrVec_0)^2}} } + \smallO (1)
\qquad \text{as } \prmtrVec \to \prmtrVec_0.
\end{align}

With these preparations, we are ready to present the eigenprojector error bound.

\begin{theorem}[A priori total eigenprojector error bound of the Taylor-RBM] \label{thm:Taylor_rbm_projector_error}
Under the notations from the beginning of \Cref{sec:Taylor_rbm} and from \Cref{def:taylor_rbm}, 
it holds for $\prmtrVec$ close to $\prmtrVec_0$ that 
\begin{align*}
\| \mat{Q}_{\rm{rb}}(\prmtrVec) - \mat{P}(\prmtrVec) \|
\, \leq \, 
\sqrt{1 + \short{\gamma(\prmtrVec)^{-2} \rm{Res}_{}^2\!(\prmtrVec,{\RBproj}_\Ord) } \full{\frac{ \rm{Res}_{}^2\!(\prmtrVec,{\RBproj}_\Ord) }{\gamma(\prmtrVec)^2}} } \, \|{\RBproj}_\Ord^\perp \mat{P}(\prmtrVec)\|
\end{align*}
with $\rm{Res}_{}^2\!(\prmtrVec,{\RBproj}_\Ord)$ and $\gamma(\prmtrVec)$ defined in \eqref{eq:def_residual_pi_n} and \eqref{eq:def_gamma}, respectively.
In particular, we have the asymptotic estimate
\begin{align*}
\| {\mat{Q}_{\rm{rb}}}(\prmtrVec) - \mat{P}(\prmtrVec) \| 
\; = \;
\bigO \big( |\prmtrVec - \prmtrVec_0|^{\Ord + 1} \big) 
\qquad \text{as } \prmtrVec \to \prmtrVec_0,
\end{align*}
and the coefficient of the leading error term is given by 
\short{$\sqrt{1+ \gamma(\prmtrVec_0)^{-2} \rm{Res}_{}^2\!(\prmtrVec_0,{\RBproj}_\Ord) } P_{\Ord+1}(0), $}
\full{\begin{align*}
\sqrt{1+\frac{ \rm{Res}_{}^2\!(\prmtrVec_0,{\RBproj}_\Ord) }{\gamma(\prmtrVec_0)^2}} \; 
P_{\Ord+1}(0), 
\end{align*}}
where $P_{\Ord+1}(0) = \sum_{|\multiidx| = \Ord + 1}   \| \mat{P}^{(\multiidx)} \|$ is defined in \eqref{eq:series_P_norm}.
\end{theorem}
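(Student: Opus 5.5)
The first inequality will follow by applying \Cref{thm:Cea_EVP} pointwise in $\prmtrVec$. Fix $\prmtrVec$ in the neighborhood provided by \Cref{lem:taylor_rbm_well_posedness}, in which $\gamma(\prmtrVec)>0$. Set $\mat{A} \coloneqq \mat{A}(\prmtrVec)$, $\mat{P} \coloneqq \mat{P}(\prmtrVec)$, $\RBproj \coloneqq \RBproj_\Ord$ and $\mat{Q} \coloneqq \mat{Q}_{\rm{rb}}(\prmtrVec)$. We then verify the hypotheses of \Cref{thm:Cea_EVP}:
\begin{itemize}
\item $\mat{P}$ is an orthogonal projector of rank $M$ commuting with $\mat{A}$, by \Cref{thm:projector_power_series} and \Cref{rem:multiple_eigenvalues}.
\item $\mat{Q}$ has rank $M$ and $\ran \mat{Q} \subseteq \RBspace_{\!\Ord}$.
\item $\mat{Q}$ commutes with $\RBproj \mat{A} \RBproj$, being a spectral projector of the compression by the variational definition \eqref{eq:taylor_rbm_reduced_problem_concrete}.
\item The gap in \Cref{thm:Cea_EVP} is exactly $\gamma(\prmtrVec)$ from \eqref{eq:def_gamma}.
\end{itemize}
\Cref{thm:Cea_EVP} then bounds $\|\mat{Q}^\perp \mat{P}\|$. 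Since $\mat{P}$ and $\mat{Q}$ have the same rank, \Cref{lem:proj_diff_equiv} gives $\|\mat{Q}-\mat{P}\| = \|\mat{Q}^\perp\mat{P}\|$. Taking square roots yields the stated inequality.

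For the asymptotic statement we need to control $\|\RBproj_\Ord^\perp \mat{P}(\prmtrVec)\|$. The key observation is that $\ran \mat{P}^{(\multiidx)} \subseteq \RBspace_{\!\Ord}$ for all $|\multiidx| \le \Ord$ by \Cref{def:taylor_rbm}. Hence $\RBproj_\Ord^\perp \mat{P}^{[\Ord]}(\prmtrVec) = \mat{0}$, and
\begin{align*}
\RBproj_\Ord^\perp \mat{P}(\prmtrVec) = \RBproj_\Ord^\perp\big(\mat{P}(\prmtrVec) - \mat{P}^{[\Ord]}(\prmtrVec)\big) = \sum_{|\multiidx|\ge \Ord+1} (\prmtrVec-\prmtrVec_0)^{\multiidx}\, \RBproj_\Ord^\perp \mat{P}^{(\multiidx)}.
\end{align*}
We bound $\|\RBproj_\Ord^\perp\| \le 1$, use $|(\prmtrVec-\prmtrVec_0)^{\multiidx}| \le |\prmtrVec-\prmtrVec_0|^{|\multiidx|}$ (each $|\mu_j-\mu_{0,j}| \le |\prmtrVec-\prmtrVec_0|$), and group the terms by $|\multiidx| = \Ord+1+p$. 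This gives
\begin{align*}
\|\RBproj_\Ord^\perp \mat{P}(\prmtrVec)\| \le |\prmtrVec-\prmtrVec_0|^{\Ord+1} P_{\Ord+1}(|\prmtrVec-\prmtrVec_0|),
\end{align*}
with $P_{\Ord+1}$ from \eqref{eq:series_P_norm}. This series converges for $\prmtrVec$ close to $\prmtrVec_0$ by analyticity, and it is continuous with $P_{\Ord+1}(x) = P_{\Ord+1}(0) + \smallO(1)$ as $x \to 0$.

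Finally, we combine this bound with the continuity statement \eqref{eq:res_gamma_continuity} for the prefactor. The product equals $|\prmtrVec-\prmtrVec_0|^{\Ord+1}$ times $\big(\sqrt{1+\gamma(\prmtrVec_0)^{-2}\rm{Res}^2(\prmtrVec_0,\RBproj_\Ord)}\,P_{\Ord+1}(0) + \smallO(1)\big)$. This yields both the $\bigO(|\prmtrVec-\prmtrVec_0|^{\Ord+1})$ estimate and the stated leading coefficient.

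The main obstacle is the first step: justifying that the gap hypothesis and the commutation of $\mat{Q}_{\rm{rb}}(\prmtrVec)$ with the compression hold uniformly on a single common neighborhood. This rests on \Cref{lem:taylor_rbm_well_posedness} together with the continuity of eigenvalues (\Cref{rem:individual_eigenvalues}) of both $\mat{A}(\prmtrVec)$ and its compression, so that $\gamma(\prmtrVec)$ stays positive and continuous near $\gamma(\prmtrVec_0)>0$. I would take the neighborhood to be the intersection of the one from \Cref{lem:taylor_rbm_well_posedness}, the one on which $\gamma(\prmtrVec)>0$, and the convergence radius of the series for $\mat{P}(\prmtrVec)$.
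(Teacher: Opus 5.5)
Your proposal is correct and follows essentially the same route as the paper. You apply \Cref{lem:proj_diff_equiv} and \Cref{thm:Cea_EVP} pointwise to $\mat{P}(\prmtrVec)$ and $\mat{Q}_{\rm{rb}}(\prmtrVec)$, use ${\RBproj}_\Ord^\perp \mat{P}^{[\Ord]}(\prmtrVec)=\mat{0}$ to bound the projection error by $|\prmtrVec-\prmtrVec_0|^{\Ord+1}P_{\Ord+1}(|\prmtrVec-\prmtrVec_0|)$, and combine this with \eqref{eq:res_gamma_continuity}; your explicit check of the hypotheses of \Cref{thm:Cea_EVP} (e.g.\ that the gap there coincides with $\gamma(\prmtrVec)$) only spells out what the paper leaves implicit.
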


This theorem tells us that within the setting of \Cref{sec:Taylor_rbm},
the approximation error of the Taylor-RBM is essentially controlled by projection error of $\mat{P}(\prmtrVec)$ on ${\RBproj}_\Ord$, 
which is by the construction of $\RBspace_{\!\Ord}(\prmtrVec_0)$ at least in the magnitude of the truncation error of the Taylor series of $\mat{P}(\prmtrVec)$.
The leading error term suggests that the quality of the approximation also depends on how well $\RBspace_{\!\Ord}(\prmtrVec_0)$ approximates an invariant subspace of $\mat{A}(\prmtrVec_0)$ (measured by $\rm{Res}_{}^2\!(\prmtrVec_0,{\RBproj}_\Ord)$)
as well as on "how bad is the problem" (measured by $\gamma(\prmtrVec_0)^{-1}$).

\begin{proof}
As discussed right before this theorem, $\gamma(\prmtrVec)$ is positive and continuous in a small neighborhood of $\prmtrVec_0$. 
Besides, $\mat{P}(\prmtrVec)$ and $\mat{Q}_{\rm{rb}}(\prmtrVec)$ have the same rank in a small neighborhood of $\prmtrVec_0$. 
Applying \Cref{lem:proj_diff_equiv} and \Cref{thm:Cea_EVP} to $\mat{P}(\prmtrVec)$ and $\mat{Q}_{\rm{rb}}(\prmtrVec)$,
we obtain the desired inequality. 

For the asymptotic estimate, 
the definition of ${\RBproj}_\Ord$ in \Cref{def:taylor_rbm} yields
${\RBproj}_\Ord^\perp \mat{P}^{[\Ord]}(\prmtrVec) = \mat{0}$, 
where $\mat{P}^{[\Ord]}(\prmtrVec)$ is the $\Ord$-th truncation of the series expansion of $\mat{P}(\prmtrVec)$ at $\prmtrVec_0$, 
cf. \eqref{eq:series_P_Pn}. 
Hence, we obtain 
\begin{align*}
\|{\RBproj}_\Ord^\perp \mat{P}(\prmtrVec)\|
\, \leq \,
\| \mat{P}(\prmtrVec) - \mat{P}^{[\Ord]}(\prmtrVec) \|
\, \leq \,
|\prmtrVec - \prmtrVec_0|^{\Ord+1} P_{n+1}(|\prmtrVec - \prmtrVec_0|)
\end{align*}
for $P_{\Ord+1}$ defined in \eqref{eq:series_P_norm}.
Combining this estimate with \eqref{eq:res_gamma_continuity}, we conclude the proof.
\end{proof}

We now compare the true Ritz-value sum $\tr \, \mat{A}(\prmtrVec)\mat{P}(\prmtrVec)$ with its Taylor-RBM approximation $\tr \, \mat{A}(\prmtrVec)\mat{Q}_{\rm{rb}}(\prmtrVec)$.
Proceeding similarly as in \Cref{thm:energy_difference_truncation}, 
we need some preparations.

To begin with, 
let us choose $\epsilon >0 $, 
such that \Cref{lem:taylor_rbm_well_posedness} and \Cref{thm:Taylor_rbm_projector_error} hold true for all~$\prmtrVec$ with $|\prmtrVec - \prmtrVec_0| \leq \epsilon$.
Then, we extend $P_{n+1}(x)$ as a complex function on $\{ z \in \bb{C} : |z| \leq \epsilon \}$
and define 
\begin{align} \label{eq:def_Pn_infty_rb}
\|P_{n+1}\|_{\infty, \epsilon} 
\, \coloneqq \,
\max_{|z| = \epsilon} |P_{n+1}(z)|.
\end{align}
Note that $\|P_{n+1}\|_{\infty, \epsilon} \geq \max_{|z| \leq \epsilon} |P_{n+1}(z)|$ due to the maximum modulus principle.

In addition, 
$\mat{Q}_{\rm{rb}}(\prmtrVec)$ is analytic in $\prmtrVec$ for $|\prmtrVec - \prmtrVec_0| \leq \epsilon$ by the proof of \Cref{lem:taylor_rbm_well_posedness}.
Thus, it admits a convergent power series expansion around $\prmtrVec_0$ that reads as 
\short{$\mat{Q}_{\rm{rb}}(\prmtrVec) \, = \, \sum_{\multiidx \in \bb{N}_0^\Pdim} (\prmtrVec - \prmtrVec_0)^{\multiidx} \, \mat{Q}_{\rm{rb}}^{(\multiidx)}.$}
\full{\begin{align*}
\mat{Q}_{\rm{rb}}(\prmtrVec) \, = \, \sum_{\multiidx \in \bb{N}_0^\Pdim} (\prmtrVec - \prmtrVec_0)^{\multiidx} \, \mat{Q}_{\rm{rb}}^{(\multiidx)}.
\end{align*}}
Using the multivariate Cauchy integral formula, we obtain for $\multiidx = (\beta_1,\dots, \beta_\Pdim) \in \bb{N}_0^\Pdim$ that 
\begin{align} \label{eq:coeff_Q_rb}
\mat{Q}^{(\multiidx)}_{\rm{rb}}
\,=\, 
\frac{1}{(2 \pi \i)^{\Pdim}}
\oint_{|\zeta_1| = \epsilon} \!\! \cdots \oint_{|\zeta_{\Pdim}| = \epsilon} 
\frac{\mat{Q}_{\rm{rb}}(\prmtrVec_0 + \zeta_1 \vec{e}_1 + \cdots + \zeta_{\Pdim} \vec{e}_{\Pdim})}
{\zeta_1^{\beta_1 + 1} \cdots \zeta_{\Pdim}^{\beta_{\Pdim} + 1}} \, \d{\zeta_1} \cdots \d{\zeta_{\Pdim}}.
\end{align}
With \Cref{lem:rank_projector_coefficients}, we also deduce that $\rank \, \mat{Q}_{\rm{rb}}^{(\multiidx)} \leq M\prod_{j=1}^{\Pdim} (\beta_j + 1)$.
Let us then define 
\begin{align} \label{eq:upper_bound_rank_beta_rb}
R_{\rm{rb}}(\Ord) 
\; \coloneqq \; 
\max_{\multiidx \in \bb{N}_0^{\Pdim}, \,  |\multiidx| = \Ord}   
\rank \big(\mat{Q}^{(\multiidx)}_{\rm{rb}}- \mat{P}^{(\multiidx)}\big) 
\quad \text{for } \Ord \in \bb{N}_0.
\end{align}

Moreover, we use the coefficients of $\mat{Q}_{\rm{rb}}(\prmtrVec)$ and of $\mat{P}(\prmtrVec)$ to define
\begin{align}
A^{(\Ord)}_{\rm{rb}}
\, \coloneqq 
\max_{\multiidx \in \bb{N}_0^{\Pdim}, \,  |\multiidx| = \Ord} 
\| \mat{A}(\prmtrVec_0) \big(\mat{I} - 2\mat{P}(\prmtrVec_0)\big) \big( \mat{Q}^{(\multiidx)}_{\rm{rb}} - \mat{P}^{(\multiidx)} \big) \|
\quad \text{for } \Ord \in \bb{N}_0,
\label{eq:def_A_rb_n}
\end{align}
which can be roughly understood as the "energy leakage"  at order $\Ord + 1$ between the two projector series w.r.t. the unperturbed operator.

Finally, we recall that the number of $\multiidx \in \bb{N}_0^{\Pdim}$ satisfying $|\multiidx| = \Ord$ is given by $\binom{\Ord+\Pdim-1}{\Pdim - 1} \coloneqq \frac{(\Ord+\Pdim-1)!}{\Ord! (\Pdim - 1)!}$.

With all these preparations, we now present the Ritz-value error estimate of the Taylor-RBM.

\begin{theorem}[A priori total Ritz-value error w.r.t. Taylor-RBM] \label{thm:energy_difference_rb}
Unter the setting from the beginning of \Cref{sec:Taylor_rbm} and from \Cref{def:taylor_rbm},
the Ritz-value difference $\tr \, \mat{A}(\prmtrVec)\big(\mat{Q}_{\rm{rb}}(\prmtrVec)-\mat{P}(\prmtrVec)\big)$ is an analytic function in a neighborhood of $\prmtrVec_0$ 
and its Taylor expansion at point $\prmtrVec_0$ has vanishing coefficients up to order $2\Ord + 1$. 
In particular, we have the asymptotic estimate
\begin{align*}
|\tr \, \mat{A}(\prmtrVec)\big(\mat{Q}_{\rm{rb}}(\prmtrVec)-\mat{P}(\prmtrVec)\big)| 
\, \leq \, 
\bigO\big(|\prmtrVec - \prmtrVec_0|^{2\Ord+2}\big)  
\quad \text{as } \prmtrVec \to \prmtrVec_0,
\end{align*}
and the coefficient of the leading error term is bounded from above by 
\begin{align} \label{eq:leading_term_energy_difference}
2 \short{\big(1+\gamma(\prmtrVec_0)^{-2} \rm{Res}_{}^2\!(\prmtrVec_0,{\RBproj}_\Ord) \big) }
\full{\bigg(1+\frac{ \rm{Res}_{}^2\!(\prmtrVec_0,{\RBproj}_\Ord)}{\gamma(\prmtrVec_0)^2}\bigg)} \, 
{\|P_{\Ord+1}\|_{\infty, \epsilon}^2}  \, R_{\rm{rb}}(\Ord+1) \; A^{(\Ord+1)}_{\rm{rb}} \;  \binom{\Ord+\Pdim-1}{\Pdim - 1}^{\!\!2}   
\end{align}
with $\rm{Res}^2(\prmtrVec_0,{\RBproj}_\Ord)$,  $\gamma(\prmtrVec_0)$, $\|P_{\Ord+1}\|_{\infty, \epsilon}$, $R_{\rm{rb}}(\Ord+1)$ and $A^{(\Ord+1)}_{\rm{rb}}$ defined in \eqref{eq:def_residual_pi_n}, \eqref{eq:def_gamma}, \eqref{eq:def_Pn_infty_rb}, \eqref{eq:upper_bound_rank_beta_rb} and \eqref{eq:def_A_rb_n}, respectively.
\end{theorem}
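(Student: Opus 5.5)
The plan is to mirror the proof of \Cref{thm:energy_difference_truncation}, with $\mat{Q}_{\rm{pt}}$ replaced by $\mat{Q}_{\rm{rb}}$, and with the projector bound of \Cref{lem:closedness_test_space} replaced by \Cref{thm:Taylor_rbm_projector_error}.

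First, analyticity. By the proof of \Cref{lem:taylor_rbm_well_posedness}, $\mat{Q}_{\rm{rb}}(\prmtrVec)$ is analytic for $|\prmtrVec-\prmtrVec_0|\le\epsilon$, and so is $\mat{A}(\prmtrVec)$. Hence the difference ${\projDiff}_{\rm{rb}}(\prmtrVec)\coloneqq\mat{Q}_{\rm{rb}}(\prmtrVec)-\mat{P}(\prmtrVec)$ and the trace $\tr\,\mat{A}(\prmtrVec){\projDiff}_{\rm{rb}}(\prmtrVec)$ are analytic there.

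Second, vanishing of the low-order coefficients. By \Cref{thm:Taylor_rbm_projector_error}, $\|{\projDiff}_{\rm{rb}}(\prmtrVec)\|=\bigO(|\prmtrVec-\prmtrVec_0|^{\Ord+1})$. Uniqueness of Taylor coefficients then forces $\mat{Q}^{(\multiidx)}_{\rm{rb}}=\mat{P}^{(\multiidx)}$ for all $|\multiidx|\le\Ord$, so ${\projDiff}_{\rm{rb}}$ has a power series starting at order $\Ord+1$.

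Third, the trace identity. Since $\mat{P}(\prmtrVec)$ commutes with $\mat{A}(\prmtrVec)$ and both projectors have rank $M$, \Cref{lem:energy_difference_general} applies pointwise and gives
\begin{align*}
\tr\,\mat{A}(\prmtrVec){\projDiff}_{\rm{rb}}(\prmtrVec)=\tr\,\mat{A}(\prmtrVec)\big(\mat{I}-2\mat{P}(\prmtrVec)\big){\projDiff}_{\rm{rb}}(\prmtrVec)^2 .
\end{align*}
The square of a series starting at order $\Ord+1$ starts at order $2\Ord+2$, so all Taylor coefficients up to order $2\Ord+1$ vanish. The leading term is
\begin{align*}
T=\sum_{|\multiidxSec_1|=|\multiidxSec_2|=\Ord+1}(\prmtrVec-\prmtrVec_0)^{\multiidxSec_1+\multiidxSec_2}\,\tr\,\mat{A}(\prmtrVec_0)\big(\mat{I}-2\mat{P}(\prmtrVec_0)\big)\big(\mat{Q}^{(\multiidxSec_1)}_{\rm{rb}}-\mat{P}^{(\multiidxSec_1)}\big)\big(\mat{Q}^{(\multiidxSec_2)}_{\rm{rb}}-\mat{P}^{(\multiidxSec_2)}\big).
\end{align*}
Bounding $|(\prmtrVec-\prmtrVec_0)^{\multiidxSec}|\le|\prmtrVec-\prmtrVec_0|^{|\multiidxSec|}$ and counting the index pairs produces the binomial factor squared.

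Fourth, the per-term bound. Each summand is estimated with the trace inequality \eqref{eq:general_trace_ineq}. Take $\mat{W}$ to be the orthogonal projector onto $\ran(\mat{Q}^{(\multiidxSec_1)}_{\rm{rb}}-\mat{P}^{(\multiidxSec_1)})$ and set $\mat{M}_1=\mat{A}(\prmtrVec_0)(\mat{I}-2\mat{P}(\prmtrVec_0))\mat{W}$. The rank factor is controlled by $R_{\rm{rb}}(\Ord+1)$ (with the factor $2$ as in the earlier proof), and $\|\mat{M}_1\|\le A^{(\Ord+1)}_{\rm{rb}}$.

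Fifth, the coefficient norms, which I expect to be the main obstacle. The earlier argument needs a pointwise comparison of ${\projDiff}$ with $\mat{P}-\mat{P}^{[\Ord]}$ on the complex polydisc used in the Cauchy formula \eqref{eq:coeff_Q_rb}. That comparison is not immediate here, because \Cref{thm:Taylor_rbm_projector_error} is stated only for real $\prmtrVec$. I would apply the Cauchy estimate to ${\projDiff}_{\rm{rb}}$ on the polydisc of radius $\epsilon$ and bound the integrand by
\begin{align*}
\sqrt{1+\gamma^{-2}\rm{Res}^2}\;\|{\RBproj}_\Ord^\perp\mat{P}\|\;\le\;\sqrt{1+\gamma^{-2}\rm{Res}^2}\;\|\mat{P}-\mat{P}^{[\Ord]}\|\;\le\;\sqrt{1+\gamma^{-2}\rm{Res}^2}\,\epsilon^{\Ord+1}\|P_{\Ord+1}\|_{\infty,\epsilon}.
\end{align*}
The factor $\epsilon^{\Ord+1}$ cancels against the $\epsilon^{-|\multiidx|}$ from the Cauchy formula. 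This yields $\|\mat{Q}^{(\multiidx)}_{\rm{rb}}-\mat{P}^{(\multiidx)}\|\le\sqrt{1+\gamma^{-2}\rm{Res}^2}\,\|P_{\Ord+1}\|_{\infty,\epsilon}$.

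To justify the integrand bound, I would first try an alternative route that avoids complex parameters. By \Cref{thm:Cea_EVP}, $\|\mat{Q}_{\rm{rb}}^\perp\mat{P}\|^2$ is bounded by a constant times $\|{\RBproj}_\Ord^\perp\mat{P}\|^2$. I would then extract the order-$(\Ord+1)$ coefficients by dividing by $|\prmtrVec-\prmtrVec_0|^{\Ord+1}$ along real rays and letting $\prmtrVec\to\prmtrVec_0$; this uses \eqref{eq:res_gamma_continuity} to freeze the constant at $\prmtrVec_0$. The resulting bound is by the directional leading coefficient, which is at most $P_{\Ord+1}(0)\le\|P_{\Ord+1}\|_{\infty,\epsilon}$.

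If that route does not give bounds on individual coefficients, I would fall back on the Cauchy route. It needs $\epsilon$ small enough that the Sylvester-equation argument of \Cref{thm:Cea_EVP} still holds on the polydisc, treating the non-Hermitian complexified case with the same separation constant up to $\smallO(1)$. Multiplying the two coefficient-norm bounds gives the squared factor $(1+\gamma(\prmtrVec_0)^{-2}\rm{Res}^2(\prmtrVec_0,{\RBproj}_\Ord))\|P_{\Ord+1}\|_{\infty,\epsilon}^2$. Collecting everything gives \eqref{eq:leading_term_energy_difference}.
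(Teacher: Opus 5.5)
Your outline is the paper's own proof: the paper just says the proof of \Cref{thm:energy_difference_truncation} carries over verbatim. The first half of your outline is sound. Analyticity, $\mat{Q}^{(\multiidx)}_{\rm{rb}}=\mat{P}^{(\multiidx)}$ for $|\multiidx|\le\Ord$, and \Cref{lem:energy_difference_general} together give vanishing Taylor coefficients through order $2\Ord+1$ and the $\bigO(|\prmtrVec-\prmtrVec_0|^{2\Ord+2})$ estimate.

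\textbf{The gap is in your fourth step.} The claim $\|\mat{M}_1\|\le A^{(\Ord+1)}_{\rm{rb}}$ does not follow from \eqref{eq:def_A_rb_n}. Write $\mat{B}\coloneqq\mat{A}(\prmtrVec_0)(\mat{I}-2\mat{P}(\prmtrVec_0))$ and $\mat{D}^{(\multiidx)}\coloneqq\mat{Q}^{(\multiidx)}_{\rm{rb}}-\mat{P}^{(\multiidx)}$. Then $A^{(\Ord+1)}_{\rm{rb}}$ bounds $\|\mat{B}\mat{D}^{(\multiidx)}\|$, not $\|\mat{B}\mat{W}\|$ for the orthogonal projector $\mat{W}$ onto $\ran\,\mat{D}^{(\multiidx)}$. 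Only $\|\mat{B}\mat{D}\|\le\|\mat{B}\mat{W}\|\,\|\mat{D}\|$ holds, and the reverse generally fails, e.g.\ $\|\mat{B}(\eta\mat{W})\|=\eta\|\mat{B}\mat{W}\|$ for $0<\eta<1$.

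This cannot be patched, because the displayed constant is cubic in the order-$(\Ord+1)$ data while $T$ is quadratic. Concretely, reparametrizing $\prmtrVec\mapsto\prmtrVec_0+s(\prmtrVec-\prmtrVec_0)$ with $0<s\le 1$ scales the true leading coefficient by $s^{2\Ord+2}$ but the displayed bound by at most $s^{3\Ord+3}$. The bound is in fact false. Take $\mat{A}(\prmtr)=\begin{bmatrix}0 & c\prmtr\\ c\prmtr & 1\end{bmatrix}$, $\prmtr_0=0$, $M=1$, $\Ord=0$. Then $\mat{Q}_{\rm{rb}}\equiv\vec{e}_1\vec{e}_1^*$ and $\tr\,\mat{A}(\prmtr)\projDiff_{\rm{rb}}(\prmtr)=\sqrt{1/4+c^2\prmtr^2}-1/2=c^2\prmtr^2+\bigO(\prmtr^4)$. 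On the other side, $\rm{Res}(\prmtr_0,\RBproj_0)=0$ (so the prefactor is $1$), $R_{\rm{rb}}(1)=2$, $A^{(1)}_{\rm{rb}}=|c|$, and $\|P_1\|_{\infty,\epsilon}\to\|\mat{P}^{(1)}\|=|c|$ as $\epsilon\to0$. Hence the displayed expression tends to $4|c|^3<c^2$ whenever $0<|c|<1/4$.

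The proof of \Cref{thm:energy_difference_truncation} that you and the paper mirror contains exactly this step. It also miscounts: it gives $\binom{\Ord+\Pdim-1}{\Pdim-1}^2$ pairs with $|\multiidxSec_1|=|\multiidxSec_2|=\Ord+1$, whereas there are $\binom{\Ord+\Pdim}{\Pdim-1}^2$. So your ``counting the index pairs'' would not reproduce the stated binomial for $\Pdim\ge2$ either.

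\textbf{On your fifth step.} You correctly notice something the paper passes over: its Cauchy estimate evaluates a real-parameter bound at complex $\tilde{\prmtrVec}$. For \Cref{thm:Cea_EVP} this is not automatic. At complex $\tilde{\prmtrVec}$, $\mat{A}(\tilde{\prmtrVec})$ is not Hermitian and $\mat{P}(\tilde{\prmtrVec})$, $\mat{Q}_{\rm{rb}}(\tilde{\prmtrVec})$ are oblique projectors. So \Cref{lem:proj_diff_equiv} and the self-adjoint Sylvester bound with constant $\gamma^{-1}$ are unavailable, and your complexification route would be a separate piece of work. Your real-ray route only bounds the homogeneous polynomial $\mat{H}(\vec{v})\coloneqq\sum_{|\multiidx|=\Ord+1}\vec{v}^{\multiidx}\mat{D}^{(\multiidx)}$ for real unit $\vec{v}$. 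Passing from there to individual coefficients costs an $(\Ord,\Pdim)$-dependent factor.

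You do not need individual coefficients at all. For real $\prmtrVec$, combine \Cref{lem:energy_difference_general}, $\|\mat{I}-2\mat{P}(\prmtrVec)\|=1$, $\rank\,\projDiff_{\rm{rb}}(\prmtrVec)\le 2M$, and the estimate in the proof of \Cref{thm:Taylor_rbm_projector_error}:
\[
|\tr\,\mat{A}(\prmtrVec)\projDiff_{\rm{rb}}(\prmtrVec)|
\le 2M\,\|\mat{A}(\prmtrVec)\|\,\|\projDiff_{\rm{rb}}(\prmtrVec)\|^2
\le 2M\,\|\mat{A}(\prmtrVec)\|\Big(1+\frac{\rm{Res}^2(\prmtrVec,\RBproj_\Ord)}{\gamma(\prmtrVec)^2}\Big)|\prmtrVec-\prmtrVec_0|^{2\Ord+2}\,P_{\Ord+1}(|\prmtrVec-\prmtrVec_0|)^2 .
\]
By \eqref{eq:res_gamma_continuity}, the leading coefficient is then at most $2M\,\|\mat{A}(\prmtrVec_0)\|\big(1+\gamma(\prmtrVec_0)^{-2}\rm{Res}^2(\prmtrVec_0,\RBproj_\Ord)\big)P_{\Ord+1}(0)^2$. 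This is a correct replacement for the displayed constant and needs no complexification.

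If you want to keep $A^{(\Ord+1)}_{\rm{rb}}$, apply \eqref{eq:general_trace_ineq} to $\mat{B}\mat{D}^{(\multiidxSec_1)}$, $\mat{D}^{(\multiidxSec_2)}$, $\mat{I}$. This leaves only one coefficient-norm factor, $\|\mat{D}^{(\multiidxSec_2)}\|$, which still needs an individual-coefficient bound.
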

\Cref{thm:energy_difference_rb} can be similarly interpreted as \Cref{thm:energy_difference_truncation}. 
Besides, \Cref{thm:energy_difference_rb} is proven almost verbatim as \Cref{thm:energy_difference_truncation},
with differences being all subscripts "$\rm{pt}$" replaced by "$\rm{rb}$", 
$\varepsilon$ replaced by $\epsilon$,
and $\delta^2_{\rm{pt}}$ replaced by $ 1+{ \rm{Res}_{}^2\!(\prmtrVec_0,{\RBproj}_\Ord)}{\gamma(\prmtrVec_0)^{-2}}$. 
For brevity, we omit the details here.

\subsection{Practical assembly} \label{subsec:practical_assembly_taylor_rbm}

We begin with the following remark concerning \Cref{thm:Taylor_rbm_projector_error} and \ref{thm:energy_difference_rb},
which are presented in the current form to align with the notion in \cite{Haa17}.

\begin{remark}[Extended subspaces] \label{rem:extended_subspaces}
\Cref{thm:Taylor_rbm_projector_error} and \ref{thm:energy_difference_rb} also hold true for subspace methods 
w.r.t. any subspace containing the Taylor-RB space $ \RBspace_{\!\Ord}(\prmtrVec_0)$, and the proof is analogous.
Practically, this enables coupling the Taylor-RBM with other MOR techniques for parametric eigenvalue problems.
This will be addressed in future work.
\end{remark}

To assemble a subspace containing $\ran \, \mat{P}^{(\multiidx)}$ for $|\multiidx| \leq \Ord$, 
we only need to focus on finding $\ran \, \mat{P}_{\!0}^{(\multiidx)}$ by \Cref{rem:multiple_eigenvalues}.
Here, $\mat{P}_{\!0}^{(\multiidx)}$ is a Taylor coefficient of the analytic total eigenprojector $\mat{P}_{\!0}(\prmtrVec)$ around $\prmtrVec_0$, 
where $\mat{P}_{\!0}(\prmtrVec_0) \eqqcolon \mat{P}_{\!0}$ is the total eigenprojector associated to an isolated eigenvalue $\EigVal_0\in \{ \EigVal_1(\prmtrVec_0), \ldots, \EigVal_K(\prmtrVec_0) \}$ of multiplicity $m_0$.
In other words, it suffices to consider the setting of \Cref{subsec:unperturbed}--\ref{subsec:prop_proj_series}. 
Let us also choose  a matrix $\mat{U}_0 \in \bb{C}^{\fdim \times m_0}$, 
whose columns form an ONB of $\ran \, \mat{P}_{\!0}$,
i.e., $\mat{I}_{m_0} = \mat{U}_0^* \mat{U}_0 $, $\mat{P}_{\!0} = \mat{U}_0 \mat{U}_0^*$ and $\mat{A}_0 \mat{U}_0 = \EigVal_0 \mat{U}_0$ for $\mat{A}_0 \coloneqq \mat{A}(\prmtrVec_0)$.

Our goal is to efficiently assemble a subspace containing $\rm{span} \{ \ran \, \mat{P}_{\!0}^{(\multiidx)} : |\multiidx| \leq \Ord \}$ for a given $\Ord \in \bb{N}_0$ based on $ \EigVal_0$, $\mat{U}_0$
and the partial derivatives $\mat{A}^{\!(\multiidx)}$.
Efficiency here means threefold:
(i) avoiding the construction of dense $\fdim \times \fdim$ matrices entirely,
(ii) minimizing the combinatorial complexity associated with the number of terms, and 
(iii) ensuring the growth of the dimension of the constructed subspace as moderate as possible.

\subsubsection{Recursive formula for the assembly of $\rm{span} \{ \ran \, \mat{P}_{\!0}^{(\multiidx)} : |\multiidx| \leq \Ord \}$}
We shall not use the recursive formula for $\mat{P}_{\!0}^{(\multiidx)}$ in \Cref{prop:multivariate_DMPT} to obtain ranges of the projector coefficients, 
as  building and storing dense $\fdim \times \fdim$ matrices is not practical for large-scale problems at all.

To efficiently construct the target space, 
we need some preparations,
starting with the following lemma on the parametrization of subspaces to reduce the problem size.

\begin{lemma}[Subspace parametrisation and problem reduction] \label{lem:projector_parametrization}
For $\prmtrVec$ in a small neighborhood of~$\prmtrVec_0$, let us consider 
\begin{align} \label{eq:def_Vmu_Cmu}
\mat{U}(\prmtrVec) \coloneqq \mat{P}_{\!0}(\prmtrVec)\mat{U}_0 \in \bb{C}^{\fdim \times m_0}
\qquad \text{and} \qquad
\mat{C}(\prmtrVec) \coloneqq \mat{U}_0^* \mat{P}_{\!0}(\prmtrVec) \mat{U}_0 \in \bb{C}^{m_0 \times m_0}.   
\end{align}
Then, the following assertions hold true  for $\prmtrVec$ in a small neighborhood of $\prmtrVec_0$: 
\begin{enumerate}
\item \label{item:Cmu_inv}
$\mat{C}(\prmtrVec)$ is invertible and analytic. 
\item \label{item:ran_Pmu_Xmu}
$\mat{P}_{\!0}|_{\ran \, \mat{P}_{\!0}(\prmtrVec)} : \ran \, \mat{P}_{\!0}(\prmtrVec) \to \ran \, \mat{P}_{\!0}$ 
and $\mat{P}_{\!0}(\prmtrVec)|_{\ran \, \mat{P}_{\!0}} : \ran \, \mat{P}_{\!0} \to \ran \, \mat{P}_{\!0}(\prmtrVec)$ 
are invertible.
\item \label{item:ranP0muP0_equals_ranP0mu}
The ranges of $\mat{P}_{\!0}(\prmtrVec)$ and $\mat{U}(\prmtrVec)$ coincide, 
i.e.,
\short{$\ran \, \mat{P}_{\!0}(\prmtrVec) = \ran \, \mat{U}(\prmtrVec) =  \ran \, \mat{P}_{\!0}(\prmtrVec) \mat{U}_0.$}
\full{\begin{align*}
\ran \, \mat{P}_{\!0}(\prmtrVec)
\, = \,
\ran \, \mat{U}(\prmtrVec)
\, = \,
\ran \, \mat{P}_{\!0}(\prmtrVec) \mat{U}_0.
\end{align*}}
\item \label{item:P0mu_in_terms_of_Umu}
The orthogonal projector $\mat{P}_{\!0}(\prmtrVec)$ can be expressed as 
\begin{align} \label{eq:P0mu_in_terms_of_Umu}
\mat{P}_{\!0}(\prmtrVec)
\, = \,
\mat{U}(\prmtrVec) \, \mat{C}(\prmtrVec)^{-1} \, \mat{U}(\prmtrVec)^*,
\end{align}
and it holds that
\begin{align} \label{eq:useful_identity_A_Vmu}
\mat{A}(\prmtrVec)  \mat{U}(\prmtrVec) 
\, = \,
\mat{U}(\prmtrVec) \, \mat{C}(\prmtrVec)^{-1} \, \mat{U}_0^*  \mat{A}(\prmtrVec)  \mat{U}(\prmtrVec).
\end{align}
\item \label{item:Kmu_definition_first_prop}
The matrix 
\begin{align} \label{eq:def_Kmu}
\mat{K}(\prmtrVec) \coloneqq \mat{P}_{\!0}^\perp \mat{U}(\prmtrVec) \, \mat{C}(\prmtrVec)^{-1} \in \bb{C}^{\fdim \times m_0}
\end{align}
is well-defined and analytic in $\prmtrVec$.
It also holds that 
\begin{align*}
\mat{U}(\prmtrVec) \, \mat{C}(\prmtrVec)^{-1}
\, = \, \mat{U}_0 + \mat{K}(\prmtrVec)
\end{align*}
and so 
\short{$\ran \, \mat{P}_{\!0}(\prmtrVec) = \ran \, \big(\mat{U}_0 + \mat{K}(\prmtrVec) \big) .$}
\full{\begin{align*}
\ran \, \mat{P}_{\!0}(\prmtrVec)
\, = \,
\ran \, \big(\mat{U}_0 + \mat{K}(\prmtrVec) \big) .
\end{align*}}
\item \label{item:define_B_useful_identity}
The matrix $\mat{B}(\prmtrVec)
\, \coloneqq \,
\mat{U}_0^*  \mat{A}(\prmtrVec)  \mat{U}(\prmtrVec) \, \mat{C}(\prmtrVec)^{-1}$
is well-defined and analytic. 
It also holds that 
\begin{align} \label{eq:matrix_equation_A_V_C}
\mat{A}(\prmtrVec)  \mat{U}(\prmtrVec) \mat{C}(\prmtrVec)^{-1}
\, = \,
\big(\mat{U}_0 + \mat{K}(\prmtrVec)\big)\,\mat{B}(\prmtrVec).
\end{align}
\end{enumerate}
\end{lemma}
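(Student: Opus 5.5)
The plan is to reduce everything to one perturbative fact and then derive the six items by short algebraic manipulations. By \Cref{thm:projector_power_series}, $\mat{P}_{\!0}(\prmtrVec)$ is analytic, Hermitian, idempotent and of constant rank $m_0$ near $\prmtrVec_0$, with $\mat{P}_{\!0}(\prmtrVec_0)=\mat{P}_{\!0}$. Hence $\|\mat{P}_{\!0}(\prmtrVec)-\mat{P}_{\!0}\|<1$ on a small neighborhood, and this smallness is the only analytic input needed.

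For \ref{item:Cmu_inv}, write $\mat{C}(\prmtrVec)=\mat{I}_{m_0}+\mat{U}_0^*(\mat{P}_{\!0}(\prmtrVec)-\mat{P}_{\!0})\mat{U}_0$. A Neumann series then gives invertibility, and the inverse is analytic because the entries of the inverse are rational in analytic entries with nonvanishing determinant.

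For \ref{item:ran_Pmu_Xmu}, take $\vec{y}\in\ran\mat{P}_{\!0}(\prmtrVec)$ with $\mat{P}_{\!0}\vec{y}=0$. Then
\[
\|\vec{y}\|=\|(\mat{P}_{\!0}(\prmtrVec)-\mat{P}_{\!0})\vec{y}\|<\|\vec{y}\|,
\]
so $\vec{y}=0$. Both spaces have dimension $m_0$, so this map is bijective. The second map is handled symmetrically, or via Kato's \Cref{lem:proj_diff_equiv}-type argument.

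Item \ref{item:ranP0muP0_equals_ranP0mu} follows at once: $\ran \mat{P}_{\!0}(\prmtrVec)\mat{U}_0=\mat{P}_{\!0}(\prmtrVec)(\ran\mat{P}_{\!0})$, and this equals $\ran\mat{P}_{\!0}(\prmtrVec)$ by the bijectivity just shown.

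For \ref{item:P0mu_in_terms_of_Umu}, set $\mat{X}\coloneqq\mat{U}(\prmtrVec)\mat{C}(\prmtrVec)^{-1}\mat{U}(\prmtrVec)^*$. It is Hermitian because $\mat{C}$ is Hermitian. Using $\mat{U}^*\mat{U}=\mat{U}_0^*\mat{P}_{\!0}(\prmtrVec)^2\mat{U}_0=\mat{C}$, we get $\mat{X}^2=\mat{X}$. Moreover $\ran\mat{X}=\ran\mat{U}(\prmtrVec)$, since $\mat{C}^{-1}\mat{U}^*$ is onto $\bb{C}^{m_0}$ (its rank equals $\rank \mat{U}=m_0$). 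Thus $\mat{X}$ is the orthogonal projector onto $\ran\mat{P}_{\!0}(\prmtrVec)$, so $\mat{X}=\mat{P}_{\!0}(\prmtrVec)$. For \eqref{eq:useful_identity_A_Vmu}, commutation gives $\mat{A}\mat{U}=\mat{P}_{\!0}(\prmtrVec)\mat{A}\mat{U}$. Substituting \eqref{eq:P0mu_in_terms_of_Umu} and using $\mat{U}^*=\mat{U}_0^*\mat{P}_{\!0}(\prmtrVec)$ together with $\mat{P}_{\!0}(\prmtrVec)\mat{A}\mat{U}=\mat{A}\mat{U}$ yields the identity.

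For \ref{item:Kmu_definition_first_prop}, $\mat{K}$ is analytic as a product of analytic factors. Splitting $\mat{U}=\mat{P}_{\!0}\mat{U}+\mat{P}_{\!0}^\perp\mat{U}$ and noting $\mat{P}_{\!0}\mat{U}=\mat{U}_0\mat{U}_0^*\mat{U}=\mat{U}_0\mat{C}$ gives $\mat{U}\mat{C}^{-1}=\mat{U}_0+\mat{K}$. The range statement then follows from \ref{item:ranP0muP0_equals_ranP0mu}, since $\mat{C}^{-1}$ is invertible.

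For \ref{item:define_B_useful_identity}, $\mat{B}$ is analytic by \ref{item:Cmu_inv}. Right-multiplying \eqref{eq:useful_identity_A_Vmu} by $\mat{C}^{-1}$ and applying \ref{item:Kmu_definition_first_prop} gives \eqref{eq:matrix_equation_A_V_C}.

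The only step needing care is \ref{item:ran_Pmu_Xmu}, specifically the bijectivity of $\mat{P}_{\!0}(\prmtrVec)|_{\ran\mat{P}_{\!0}}$. For that map I will use injectivity: if $\mat{P}_{\!0}(\prmtrVec)\vec{x}=0$ with $\vec{x}\in\ran\mat{P}_{\!0}$, then $\|\vec{x}\|=\|(\mat{P}_{\!0}-\mat{P}_{\!0}(\prmtrVec))\vec{x}\|<\|\vec{x}\|$, so $\vec{x}=0$. Equal dimensions then give bijectivity. Everything else is routine.
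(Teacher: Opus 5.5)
Your proof is correct. For items \ref{item:ranP0muP0_equals_ranP0mu}--\ref{item:define_B_useful_identity} it is the paper's argument: your check that $\mat{U}(\prmtrVec)\mat{C}(\prmtrVec)^{-1}\mat{U}(\prmtrVec)^*$ is a Hermitian idempotent with range $\ran\,\mat{U}(\prmtrVec)$ just proves the formula $\mat{M}(\mat{M}^*\mat{M})^{-1}\mat{M}^*$ that the paper quotes.

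The real difference is in item \ref{item:ran_Pmu_Xmu}. The paper obtains \ref{item:Cmu_inv} from $\mat{C}(\prmtrVec_0)=\mat{I}_{m_0}$ and the stability of full rank; your Neumann series is a quantitative version of the same idea. The paper then derives \ref{item:ran_Pmu_Xmu} \emph{from} \ref{item:Cmu_inv}. It factors $\mat{C}(\prmtrVec)$ as $\mat{P}_{\!0}(\prmtrVec)\mat{U}_0:\bb{C}^{m_0}\to\ran\,\mat{P}_{\!0}(\prmtrVec)$ followed by $\mat{U}_0^*$ restricted to $\ran\,\mat{P}_{\!0}(\prmtrVec)$. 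It then uses that an invertible composition of maps between spaces of equal dimension $m_0$ has invertible factors, which needs $\rank\,\mat{P}_{\!0}(\prmtrVec)=m_0$ as an input.

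You instead prove \ref{item:ran_Pmu_Xmu} independently of $\mat{C}(\prmtrVec)$, via the classical Kato-type estimate. A vector $\vec{y}$ in one range that is annihilated by the other projector satisfies $\|\vec{y}\|\le\|\mat{P}_{\!0}(\prmtrVec)-\mat{P}_{\!0}\|\,\|\vec{y}\|$, so it must vanish.

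Your route makes the neighborhood explicit: every item holds wherever $\|\mat{P}_{\!0}(\prmtrVec)-\mat{P}_{\!0}\|<1$, within the domain where $\mat{P}_{\!0}(\prmtrVec)$ is defined. Your two injectivity statements together also yield $\rank\,\mat{P}_{\!0}(\prmtrVec)=m_0$ rather than presupposing it. The paper's route is more economical, since all six items hinge on the invertibility of the single small matrix $\mat{C}(\prmtrVec)$.
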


\begin{remark}[Connection to many-body perturbation theory]
$\mat{K}(\prmtrVec)$  is related to the \emph{intermediate normalization} or the \emph{correction operator},
and $\mat{B}(\prmtrVec)$ to the \emph{effective Hamiltonian}.
Moreover, 
the identity $\mat{U}(\prmtrVec) \, \mat{C}(\prmtrVec)^{-1} = \mat{U}_0 + \mat{K}(\prmtrVec)$ from \Cref{lem:projector_parametrization}~\ref{item:Kmu_definition_first_prop} 
can be seen as a numerical linear algebraic concretization of the \emph{wave operator equation}\cite{HubW10,DEHKO04}.
\end{remark}

\begin{proof}[Proof of \Cref{lem:projector_parametrization}]
Concerning \ref{item:Cmu_inv},  
note that analyticity of $\mat{C}(\prmtrVec)$ follows from that of $\mat{P}_{\!0}(\prmtrVec)$.
Also observe that $\mat{C}(\prmtrVec_0) = \mat{U}_0^* \mat{P}_{\!0} \mat{U}_0 = \mat{I}_{m_0}$.
Since $\mat{C}(\prmtrVec)$ is continuous in $\prmtrVec$,
$\rm{rank}\,\mat{C}(\prmtrVec_0) = m_0$ and the matrix rank is invariant under small perturbations,
$\mat{C}(\prmtrVec)$ has full rank and is thus invertible.

We now show \ref{item:ran_Pmu_Xmu}. 
Recall that if $\mat{M}_1, \mat{M}_2 $ are linear maps between finite-dimensional spaces of the same dimension, 
and if the composition $\mat{M}_1 \mat{M}_2$ is invertible,
then both $\mat{M}_1$ and $\mat{M}_2$ are invertible.
Now in our setting, 
observe that $\mat{P}_{\!0}(\prmtrVec) = \mat{P}_{\!0}(\prmtrVec)^2 = \mat{P}_{\!0}(\prmtrVec)|_{\ran \, \mat{P}_{\!0}(\prmtrVec)} \mat{P}_{\!0}(\prmtrVec)$.
Hence, we deduce from the invertibility of $\mat{C}(\prmtrVec) = \mat{U}_0^* \mat{P}_{\!0}(\prmtrVec)|_{\ran \, \mat{P}_{\!0}(\prmtrVec)} \mat{P}_{\!0}(\prmtrVec) \mat{U}_0$ 
and from $\rm{rank}\,\mat{P}_{\!0}(\prmtrVec) = m_0 = \rm{rank}\,\mat{P}_{\!0}(\prmtrVec_0) $
that $\mat{P}_{\!0}|_{\ran \, \mat{P}_{\!0}(\prmtrVec)} = \mat{U}_0 \mat{U}_0^* \mat{P}_{\!0}(\prmtrVec)|_{\ran \, \mat{P}_{\!0}(\prmtrVec)} $ is bijective as a map from $\ran\, \mat{P}_{\!0}(\prmtrVec)$ to $\ran \, \mat{P}_{\!0}$,
and also that $\mat{P}_{\!0}(\prmtrVec)|_{\ran \, \mat{P}_{\!0}} = \mat{P}_{\!0}(\prmtrVec) \mat{U}_0 \mat{U}_0^*|_{\ran \, \mat{P}_{\!0}} $ is bijective as a map from $\ran\, \mat{P}_{\!0}$ to $\ran \, \mat{P}_{\!0}(\prmtrVec)$.


\ref{item:ranP0muP0_equals_ranP0mu} follows directly from the bijectivity of $\mat{P}_{\!0}(\prmtrVec)|_{\ran \, \mat{P}_{\!0}} : \ran \, \mat{P}_{\!0} \to \ran \, \mat{P}_{\!0}(\prmtrVec)$ by \ref{item:ran_Pmu_Xmu} and from the definition of $\mat{U}(\prmtrVec)$.

To prove \ref{item:P0mu_in_terms_of_Umu}, 
note that the orthogonal projector of the column space of a full-column-rank matrix $\mat{M} \in \bb{C}^{\fdim \times m_0}$ is given by $\mat{M}(\mat{M}^* \mat{M})^{-1} \mat{M}^*$.
Applying this to $\mat{U}(\prmtrVec)$ and using the definition of $\mat{C}(\prmtrVec)$ as well as \ref{item:ranP0muP0_equals_ranP0mu}, 
we obtain \eqref{eq:P0mu_in_terms_of_Umu}.
The identity \eqref{eq:useful_identity_A_Vmu} follows from multiplying  $\mat{A}(\prmtrVec) \mat{U}(\prmtrVec)$ from the left to both sides of \eqref{eq:P0mu_in_terms_of_Umu},
and reformulating the left hand side using $\mat{P}_{\!0}(\prmtrVec) \mat{A}(\prmtrVec) = \mat{A}(\prmtrVec) \mat{P}_{\!0}(\prmtrVec)$ as well as $\mat{P}_{\!0}(\prmtrVec)\mat{U}(\prmtrVec) = \mat{U}(\prmtrVec)$.

Next, we show \ref{item:Kmu_definition_first_prop}. 
Well-definedness and analyticity of $\mat{K}(\prmtrVec)$ follow from those of $\mat{U}(\prmtrVec)$ and of $\mat{C}(\prmtrVec)^{-1}$.
The definition of $\mat{U}(\prmtrVec) = \mat{P}_{\!0}(\prmtrVec)\mat{U}_0$ and of $\mat{C}(\prmtrVec)$ yield $\mat{P}_{\!0} \mat{U}(\prmtrVec) \mat{C}(\prmtrVec)^{-1} =  \mat{U}_0$.
Hence, we use the definition of  $\mat{K}(\prmtrVec)$ to deduce that 
\begin{align*}
\mat{U}(\prmtrVec) \, \mat{C}(\prmtrVec)^{-1}
\, = \,
(\mat{P}_{\!0} + \mat{P}_{\!0}^\perp)\mat{U}(\prmtrVec) \, \mat{C}(\prmtrVec)^{-1}
\, = \,
\mat{U}_0 
+ 
\mat{K}(\prmtrVec).
\end{align*}
Again by the invertibility of $\mat{C}(\prmtrVec)$, 
we know that $\ran \, \mat{U}(\prmtrVec) = \ran \, \mat{U}(\prmtrVec) \mat{C}(\prmtrVec)^{-1}$.
Combining this with \ref{item:ranP0muP0_equals_ranP0mu} and the above equality concludes the proof of \ref{item:Kmu_definition_first_prop}.

Finally, we prove \ref{item:define_B_useful_identity}.
By subsituting the definition of $\mat{B}(\prmtrVec) = \mat{U}_0^*  \mat{A}(\prmtrVec)  \mat{U}(\prmtrVec) \, \mat{C}(\prmtrVec)^{-1}$  into \eqref{eq:useful_identity_A_Vmu}
using again $\mat{U}(\prmtrVec) \, \mat{C}(\prmtrVec)^{-1} = \mat{U}_0 + \mat{K}(\prmtrVec)$ from \ref{item:Kmu_definition_first_prop},
we arrive at \eqref{eq:matrix_equation_A_V_C}.
\end{proof}

$\mat{K}(\prmtrVec) \in \bb{C}^{\fdim \times m_0}$ is a {thin} analytic matrix function,
and $\mat{B}(\prmtrVec) \in \bb{C}^{m_0 \times m_0}$ is small analytic matrix function. 
They are crucial for the efficient assembly of $\rm{span} \{ \ran \, \mat{P}_{\!0}^{(\multiidx)} : |\multiidx| \leq \Ord \}$.
We denote by 
\begin{align} \label{eq:K_series_expansion}
\mat{K}(\prmtrVec)
\, = 
\!\!\!
\sum_{\multiidx \in \bb{N}_0^\Pdim 
    } 
    (\prmtrVec - \prmtrVec_0)^{\multiidx} \, \mat{K}^{(\multiidx)}
\quad \text{and} \quad
\mat{B}(\prmtrVec)
\, = 
\!\!\!
\sum_{\multiidx \in \bb{N}_0^\Pdim} 
    (\prmtrVec - \prmtrVec_0)^{\multiidx} \, \mat{B}^{(\multiidx)}
\end{align}
their Taylor series expansion around $\prmtrVec_0$ with coefficients $\mat{K}^{(\multiidx)} \in \bb{C}^{\fdim \times m_0}$ and $\mat{B}^{(\multiidx)} \in \bb{C}^{m_0 \times m_0}$.

Let us take a further look at $\mat{K}(\prmtrVec)$.
Note that $\mat{K}^{(\vec{0})} = \mat{K}(\prmtrVec_0) = \mat{0}$ by \Cref{lem:projector_parametrization} \ref{item:Kmu_definition_first_prop}.
The same argument yields that $\mat{U}_0^* \mat{K}^{(\multiidx)} = \mat{0}$ for all $\multiidx \in \bb{N}_0^\Pdim$.
More importantly, $\ran \, \mat{P}_{\!0}^{(\multiidx)}$ is related to $\ran\, \mat{K}^{(\multiidx)}$.

\begin{theorem}[Connection between $\mat{P}_{\!0}^{(\multiidx)}$ and $\mat{K}^{(\multiidx)}$] \label{thm:connection_P0_Y}
Consider the analytic matrix function $\mat{K}(\prmtrVec)$ defined in \eqref{eq:def_Kmu} with $\prmtrVec$ in a small neighborhood of $\prmtrVec_0$.
For $\Ord \in \bb{N}_0$,
it holds that 
\begin{align} \label{eq:proj_range_relation}
\begin{aligned}
\rm{span}\{ \ran\, \mat{P}_{\!0}^{(\multiidx)} : |\multiidx| \leq \Ord \}
\, & = \,
\rm{span}\{ \ran\, \mat{P}_{\!0}^{(\multiidx)} \mat{U}_0 : |\multiidx| \leq \Ord \} \\
\, & = \,
\rm{span}\{ \ran\, \mat{U}_0,  \ran\, \mat{K}^{(\multiidx)} : |\multiidx| \leq \Ord \}.
\end{aligned}
\end{align}
\end{theorem}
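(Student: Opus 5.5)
We will prove the two equalities in \eqref{eq:proj_range_relation} by establishing a chain of inclusions, using the Taylor expansions of $\mat{U}(\prmtrVec)=\mat{P}_{\!0}(\prmtrVec)\mat{U}_0$, $\mat{C}(\prmtrVec)$ and $\mat{K}(\prmtrVec)$. Denote the three spaces in \eqref{eq:proj_range_relation} by $\cal{X}_\Ord$, $\cal{Y}_\Ord$, $\cal{Z}_\Ord$.

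\textbf{Step 1: $\cal{Y}_\Ord \subseteq \cal{X}_\Ord$.} This is immediate, since $\ran\,\mat{P}_{\!0}^{(\multiidx)}\mat{U}_0 \subseteq \ran\,\mat{P}_{\!0}^{(\multiidx)}$.

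\textbf{Step 2: $\cal{Z}_\Ord \subseteq \cal{Y}_\Ord$.} First, $\ran\,\mat{U}_0 = \ran\,\mat{P}_{\!0}^{(\vec{0})}\mat{U}_0$. Next, by \Cref{lem:projector_parametrization}~\ref{item:Kmu_definition_first_prop} we have $\mat{U}_0+\mat{K}(\prmtrVec) = \mat{U}(\prmtrVec)\mat{C}(\prmtrVec)^{-1}$. Since $\mat{C}(\prmtrVec)^{-1}$ is analytic by \ref{item:Cmu_inv}, we expand $\mat{C}(\prmtrVec)^{-1}=\sum_{\multiidx}(\prmtrVec-\prmtrVec_0)^{\multiidx}\mat{D}^{(\multiidx)}$ with $\mat{D}^{(\multiidx)}\in\bb{C}^{m_0\times m_0}$. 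By the Cauchy product,
\[
\mat{K}^{(\multiidx)} = \sum_{\multiidxSec_1+\multiidxSec_2=\multiidx} \mat{P}_{\!0}^{(\multiidxSec_1)}\mat{U}_0\,\mat{D}^{(\multiidxSec_2)} \quad (\multiidx\neq\vec{0}).
\]
Each summand has range inside $\ran\,\mat{P}_{\!0}^{(\multiidxSec_1)}\mat{U}_0$ with $|\multiidxSec_1|\le|\multiidx|\le\Ord$, so $\ran\,\mat{K}^{(\multiidx)}\subseteq\cal{Y}_\Ord$.

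\textbf{Step 3: $\cal{X}_\Ord \subseteq \cal{Z}_\Ord$.} This is the main step. By \eqref{eq:P0mu_in_terms_of_Umu},
\[
\mat{P}_{\!0}(\prmtrVec) = \mat{U}(\prmtrVec)\mat{C}(\prmtrVec)^{-1}\mat{U}(\prmtrVec)^* = \big(\mat{U}_0+\mat{K}(\prmtrVec)\big)\mat{U}(\prmtrVec)^*.
\]
Since $\mat{U}(\prmtrVec)^* = \mat{U}_0^*\mat{P}_{\!0}(\prmtrVec)$ has real-analytic Taylor coefficients $\mat{U}_0^*\mat{P}_{\!0}^{(\multiidx)}$ (the $\mat{P}_{\!0}^{(\multiidx)}$ are Hermitian by \Cref{lem:Hermitian_proj_series_coeff}, and the parameter is real), comparing coefficients gives
\[
\mat{P}_{\!0}^{(\multiidx)} = \sum_{\multiidxSec_1+\multiidxSec_2=\multiidx}\big(\delta_{\multiidxSec_1,\vec{0}}\mat{U}_0 + \mat{K}^{(\multiidxSec_1)}\big)\,\mat{U}_0^*\mat{P}_{\!0}^{(\multiidxSec_2)},
\]
with $\mat{K}^{(\vec{0})}=\mat{0}$. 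Every summand has range in $\ran\,\mat{U}_0$ or in $\ran\,\mat{K}^{(\multiidxSec_1)}$ with $|\multiidxSec_1|\le\Ord$. Hence $\ran\,\mat{P}_{\!0}^{(\multiidx)}\subseteq\cal{Z}_\Ord$.

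The combined chain $\cal{X}_\Ord\subseteq\cal{Z}_\Ord\subseteq\cal{Y}_\Ord\subseteq\cal{X}_\Ord$ yields both equalities.

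The expected obstacle is justifying the coefficient identification in Step 3. This requires the series for $\mat{U}(\prmtrVec)^*$, which uses that conjugation commutes with real-analytic expansion in the real variable $\prmtrVec$. It also requires a common neighborhood of $\prmtrVec_0$ where all the series converge absolutely, so that the Cauchy products are legitimate. Both are routine once the domain is shrunk suitably.
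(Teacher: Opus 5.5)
Your proof is correct, but it is organised differently from the paper's. The paper proves the two equalities separately. For the first, it fixes an arbitrary $\vec{u}$, expands $\mat{P}_{\!0}(\prmtrVec)\vec{u}$ in the moving basis $\{\mat{P}_{\!0}(\prmtrVec)\vec{v}_{\!j}\}$ with analytic coordinates $\alpha_j(\prmtrVec)$, and compares coefficients. For the second, it expands $\mat{P}_{\!0}(\prmtrVec)\mat{U}_0=(\mat{U}_0+\mat{K}(\prmtrVec))\mat{C}(\prmtrVec)$. This gives one inclusion at once; the reverse inclusion needs solving for $\mat{K}^{(\multiidx)}$ and an induction on $\Ord$.

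You instead close the cycle $\mathcal{X}_\Ord\subseteq\mathcal{Z}_\Ord\subseteq\mathcal{Y}_\Ord\subseteq\mathcal{X}_\Ord$. The paper's coordinate argument encodes the same identity you use, $\mat{P}_{\!0}(\prmtrVec)=\mat{U}(\prmtrVec)\mat{C}(\prmtrVec)^{-1}\mat{U}_0^*\mat{P}_{\!0}(\prmtrVec)$: its $\alpha_j(\prmtrVec)$ are the coordinates of $\mat{U}_0\mat{C}(\prmtrVec)^{-1}\mat{U}_0^*\mat{P}_{\!0}(\prmtrVec)\vec{u}$ in the basis $\{\vec{v}_{\!j}\}$. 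The difference is the grouping of factors. The paper groups them as $\mat{U}(\prmtrVec)\cdot\bigl(\mat{C}(\prmtrVec)^{-1}\mat{U}_0^*\mat{P}_{\!0}(\prmtrVec)\bigr)$ and lands in $\mathcal{Y}_\Ord$. You group them as $\bigl(\mat{U}_0+\mat{K}(\prmtrVec)\bigr)\cdot\mat{U}_0^*\mat{P}_{\!0}(\prmtrVec)$ and land directly in $\mathcal{Z}_\Ord$.

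Similarly, in Step 2 you expand $\mat{C}(\prmtrVec)^{-1}$ rather than $\mat{C}(\prmtrVec)$. This writes each $\mat{K}^{(\multiidx)}$ explicitly through the $\mat{P}_{\!0}^{(\multiidxSec)}\mat{U}_0$ with $|\multiidxSec|\le|\multiidx|$, so the paper's induction is not needed.

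What each route buys:
\begin{itemize}
\item Yours is shorter and needs neither an auxiliary basis nor induction.
\item The paper's proves the first equality without passing through $\mat{K}$. Its coefficient relation also uses the directly computable $\mat{C}^{(\multiidx)}=\mat{U}_0^*\mat{P}_{\!0}^{(\multiidx)}\mat{U}_0$ rather than the coefficients of $\mat{C}(\prmtrVec)^{-1}$.
\end{itemize}

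The obstacle you flag in Step 3 is not one. For real $\prmtrVec$, $\mat{P}_{\!0}(\prmtrVec)$ is an orthogonal projector, so $\mat{U}(\prmtrVec)^*=\mat{U}_0^*\mat{P}_{\!0}(\prmtrVec)$ identically. Its Taylor coefficients are therefore $\mat{U}_0^*\mat{P}_{\!0}^{(\multiidx)}$ without appealing to \Cref{lem:Hermitian_proj_series_coeff}.
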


We emphasize that constructing the desired subspace $\rm{span}\{ \ran\, \mat{P}_{\!0}^{(\multiidx)} : |\multiidx| \leq \Ord \}$ by  definition
concerns large and generally dense matrices $\mat{P}_{\!0}^{(\multiidx)}$.
However, $\rm{span}\{ \ran\, \mat{U}_0,  \ran\, \mat{K}^{(\multiidx)} : |\multiidx| \leq \Ord \}$ only involves the thin matrices $\mat{K}^{(\multiidx)}$ of size $\fdim \times m_0$ with $m_0 \ll \fdim$,
which can be computed recursively as we will see in \Cref{thm:resursive_Kmu}.

\begin{remark}[Upper bound on the dimension of the constructed subspace] \label{rem:upper_bound_dim_RB}
As the number of multi-indices $\multiidx$ with $|\multiidx| = \Ord$ is $\binom{\Ord + \Pdim - 1}{\Pdim - 1}$, 
and the thin matrices $\mat{U}_0$, $ \mat{K}^{(\multiidx)}$ have rank at most $m_0$,
\eqref{eq:proj_range_relation} together with the "hockey-stick identity" $\sum_{k=0}^{\Ord} \binom{k + \Pdim - 1}{\Pdim - 1} = \binom{\Ord + \Pdim}{\Pdim}$ implies that 
\short{the dimension of $\rm{span}\{ \ran\, \mat{P}_{\!0}^{(\multiidx)} : |\multiidx| \leq \Ord \}$ is at most $m_0 \binom{\Ord + \Pdim}{\Pdim}$.}
\full{$$
\dim \, \rm{span}\{ \ran\, \mat{P}_{\!0}^{(\multiidx)} : |\multiidx| \leq \Ord \}
\, \leq\, 
m_0 \binom{\Ord + \Pdim}{\Pdim}.
$$} 
\end{remark}

\begin{proof}[Proof of \Cref{thm:connection_P0_Y}]
The two identities in \eqref{eq:proj_range_relation} are proven separately. 

\noindent$\bullet\ $\emph{Showing the first identity $\rm{span}\{ \ran\, \mat{P}_{\!0}^{(\multiidx)} : |\multiidx| \leq \Ord \} = \rm{span}\{ \ran\, \mat{P}_{\!0}^{(\multiidx)} \mat{U}_0 : |\multiidx| \leq \Ord \}$}.
The inclusion ``$\supseteq$'' is immediate to see.
To prove the other inclusion ``$\subseteq$'', 
we will proceed by showing for each fixed $\multiidx\in\bb{N}_0^{\Pdim}$ with $|\multiidx|\le n$ that
\begin{equation}\label{eq:range_inclusion_single_multiidx_goal}
\ran \, \mat{P}_{\!0}^{(\multiidx)} 
\subseteq
\rm{span}\bigl\{\ran \, \mat{P}_{\!0}^{(\multiidxSec)}\mat{P}_{\!0} : |\multiidxSec|\le |\multiidx|\bigr\}.
\end{equation}
Taking the span over all $|\multiidx|\le n$ then yields the desired inclusion in \eqref{eq:proj_range_relation}.

To prove \eqref{eq:range_inclusion_single_multiidx_goal}, 
Let $\vec{u}\in\bb{C}^\fdim$ be arbitrary, and we define an analytic vector-valued map
\short{$\mat{f}(\prmtrVec) \coloneqq \mat{P}_{\!0}(\prmtrVec)\vec{u}$ for $\prmtrVec$ near $\prmtrVec_0$.}
\full{\[
\mat{f}(\prmtrVec) \coloneqq \mat{P}_{\!0}(\prmtrVec)\vec{u}
\qquad \text{for } \prmtrVec\ \text{near}\ \prmtrVec_0.
\]}
By construction, $\mat{f}(\prmtrVec)\in\ran\,\mat{P}_{\!0}(\prmtrVec)$ for $\prmtrVec$ in a neighborhood of $\prmtrVec_0$.

Let us fix an ONB $\{\vec{v}_{\!1},\dots,\vec{v}_{\!m_0}\}$ of $\ran\,\mat{P}_{\!0}$,
and we recall from \Cref{lem:projector_parametrization} \ref{item:ran_Pmu_Xmu} that the linear map 
$\mat{P}_{\!0}(\prmtrVec)|_{\ran \, \mat{P}_{\!0}} : \ran \, \mat{P}_{\!0} \to \ran \, \mat{P}_{\!0}(\prmtrVec)$ is bijective. 
Hence, $\{\mat{P}_{\!0}(\prmtrVec)\vec{v}_{\!j}\}_{j=1}^{m_0}$ forms a basis of $\ran \,\mat{P}_{\!0}(\prmtrVec)$, 
and there exist unique coefficients $\alpha_j(\prmtrVec)$, 
such that
\begin{equation}\label{eq:f_expand_in_basis_alpha}
\mat{P}_{\!0}(\prmtrVec)\vec{u}
=
\sum_{j=1}^{m_0} \alpha_j(\prmtrVec)\,\mat{P}_{\!0}(\prmtrVec)\vec{v}_{\!j}.
\end{equation}
Moreover, the coefficient vector $\boldsymbol{\alpha}(\prmtrVec) = \big(\alpha_1(\prmtrVec),\dots,\alpha_{m_0}(\prmtrVec) \big)^{\!\top}$ depends analytically on $\prmtrVec$:
Applying $\mat{U}_0^*$ from the left to \eqref{eq:f_expand_in_basis_alpha} 
and using $\vec{v}_{\!i} = \mat{P}_{\!0} \vec{v}_{\!i} \in \ran\, \mat{P}_{\!0}$, $\mat{P}_{\!0} = \mat{U}_0 \mat{U}_0^*$ as well as $\mat{C}(\prmtrVec) = \mat{U}_0^* \mat{P}_{\!0} (\prmtrVec) \mat{U}_0$ by definition as from \Cref{lem:projector_parametrization}, 
we see that 
\[
\mat{U}_0^*\mat{P}_{\!0}(\prmtrVec)\vec{u}
=
\sum_{j=1}^{m_0} \alpha_j(\prmtrVec)\,\mat{U}_0^* \mat{P}_{\!0} (\prmtrVec) \mat{U}_0 \mat{U}_0^* \vec{v}_{\!j}
=
\mat{C}(\prmtrVec) \mat{U}_0^* \Big(\sum_{j=1}^{m_0} \alpha_j(\prmtrVec)  \vec{v}_{\!j}\Big).
\]
Since $\mat{C}(\prmtrVec)$ is invertible  and the vectors $\vec{v}_{\!j}$ form an ONB of $\ran\, \mat{P}_{\!0}$,
multiplying $\vec{v}_{\!i}^* \mat{U}_0 \mat{C}(\prmtrVec)^{-1}$ from the left yields 
\[
\alpha_i(\prmtrVec) 
= 
\vec{v}_{\!i}^* \sum_{j=1}^{m_0} \alpha_j(\prmtrVec)\vec{v}_{\!j}
=
\vec{v}_{\!i}^* \mat{U}_0 \mat{C}(\prmtrVec)^{-1} \mat{U}_0^* \mat{P}_{\!0}(\prmtrVec)\vec{u},
\]
which is analytic since $\mat{C}(\prmtrVec)^{-1}$ and $\mat{P}(\prmtrVec)$ are analytic near $\prmtrVec_0$.

Now we expand the analytic maps in multivariate Taylor-series about $\prmtrVec_0$.
Using the usual coefficient notation, 
we write
\short{$\mat{P}_{\!0}(\prmtrVec)
=
\sum_{\multiidx\in\bb{N}_0^{\Pdim}}
{(\prmtrVec-\prmtrVec_0)^{\multiidx}}\mat{P}_{\!0}^{(\multiidx)}$
and $\alpha_j(\prmtrVec)
=
\sum_{\multiidx\in\bb{N}_0^{\Pdim}}
{(\prmtrVec-\prmtrVec_0)^{\multiidx}} \alpha_j^{(\multiidx)}.$}
\full{\[
\mat{P}_{\!0}(\prmtrVec)
=
\sum_{\multiidx\in\bb{N}_0^{\Pdim}}
{(\prmtrVec-\prmtrVec_0)^{\multiidx}}\mat{P}_{\!0}^{(\multiidx)}
\qquad \text{and}\qquad
\alpha_j(\prmtrVec)
=
\sum_{\multiidx\in\bb{N}_0^{\Pdim}}
{(\prmtrVec-\prmtrVec_0)^{\multiidx}} \alpha_j^{(\multiidx)}.
\]}
Substituting these into \eqref{eq:f_expand_in_basis_alpha} and equating the coefficient of $(\prmtrVec-\prmtrVec_0)^{\multiidx}$ for every $\multiidx\in\bb{N}_0^{\Pdim}$,
we obtain  
\begin{equation}\label{eq:Pk_u_expansion_via_alpha}
\mat{P}_{\!0}^{(\multiidx)}\vec{u}
\, = \,
\sum_{j=1}^{m_0} \sum_{\substack{\multiidxSec_1, \multiidxSec_2 \in \bb{N}_0^\Pdim \\ \multiidxSec_1 + \multiidxSec_1 = \multiidx}}
\alpha_j^{(\multiidxSec_1)}\,
\mat{P}_{\!0}^{(\multiidxSec_2)}\vec{v}_{\!j}.
\end{equation}
Since $\vec{v}_{\!j}\in\ran(\mat{P}_{\!0})$, 
we have $\vec{v}_{\!j}=\mat{P}_{\!0}\vec{v}_{\!j}$, 
and hence,
$\mat{P}_{\!0}^{(\multiidxSec)}\vec{v}_{\!j} = \mat{P}_{\!0}^{(\multiidxSec)}\mat{P}_{\!0}\vec{v}_{\!j}\in \ran \, \mat{P}_{\!0}^{(\multiidxSec)}\mat{P}_{\!0}$.
Therefore, \eqref{eq:Pk_u_expansion_via_alpha} implies
\short{$\mat{P}_{\!0}^{(\multiidx)}\vec{u}
\in
\rm{span}\bigl\{\ran \, \mat{P}_{\!0}^{(\multiidxSec)}\mat{P}_{\!0} : |\multiidxSec|\le |\multiidx|\bigr\}.$}
\full{\[
\mat{P}_{\!0}^{(\multiidx)}\vec{u}
\in
\rm{span}\bigl\{\ran \, \mat{P}_{\!0}^{(\multiidxSec)}\mat{P}_{\!0} : |\multiidxSec|\le |\multiidx|\bigr\}.
\]}
Since $\vec{u} \in \bb{C}^{\fdim}$ is arbitrary, this proves \eqref{eq:range_inclusion_single_multiidx_goal}, and hence, the first identity of \eqref{eq:proj_range_relation}.

\smallskip
\noindent$\bullet\ $\emph{Showing the second identity $\rm{span}\{ \ran\, \mat{P}_{\!0}^{(\multiidx)} \mat{U}_0 : |\multiidx| \leq \Ord \} 
 = 
\rm{span}\{ \ran\, \mat{U}_0,  \ran\, \mat{K}^{(\multiidx)} : |\multiidx| \leq \Ord \}$}.
Recall from \Cref{lem:projector_parametrization} \ref{item:Kmu_definition_first_prop} that
\begin{align} \label{eq:P0muU_decomposition}
\mat{P}_{\!0}(\prmtrVec) \mat{U}_0  
\, = \, 
\mat{U}(\prmtrVec)
\, = \,
\big(\mat{U}_0 + \mat{K}(\prmtrVec)\big)\,\mat{C}(\prmtrVec),
\end{align}
where each term is analytic in $\prmtrVec$.
Also recall that $\mat{C}(\prmtrVec_0) = \mat{I}_{m_0}$, 
and let us write its Taylor-series at~$\prmtrVec_0$ as $\mat{C}(\prmtrVec) \eqqcolon \mat{I}_{m_0} + \sum_{\multiidx \neq \vec{0}} (\prmtrVec-\prmtrVec_0)^{\multiidx} \mat{C}^{(\multiidx)}$.
Expanding \eqref{eq:P0muU_decomposition} with the Taylor-series of $\mat{P}_{\!0}(\prmtrVec)$, $\mat{K}(\prmtrVec)$ and $\mat{C}(\prmtrVec)$,
and equating the coefficient of $(\prmtrVec-\prmtrVec_0)^{\multiidx}$ for each $\multiidx \in \bb{N}_0^\Pdim$,
we obtain
\begin{align} \label{eq:P0muU_in_terms_of_Kmu_Cmu}
\mat{P}_{\!0}^{(\multiidx)} \mat{U}_0
\, = \,
\mat{K}^{(\multiidx)}
+ \!\!\!
\sum_{\substack{
                \multiidxSec_1 + \multiidxSec_2 = \multiidx \\ 
                \multiidxSec_1 \neq \multiidx 
                }
      }\!\!\!\!
\mat{K}^{(\multiidxSec_1)} \, \mat{C}^{(\multiidxSec_2)}
+
\mat{U}_0 \, \mat{C}^{(\multiidx)}.
\end{align}
This implies $\ran\, \mat{P}_{\!0}^{(\multiidx)} \mat{U}_0 \subseteq \rm{span}\{ \ran\, \mat{U}_0,  \ran\, \mat{K}^{(\multiidx)} : |\multiidx| \leq \Ord \}$. 
Conversely, rearranging \eqref{eq:P0muU_in_terms_of_Kmu_Cmu} yields
\begin{align*}
\mat{K}^{(\multiidx)}
\, = \,
\mat{P}_{\!0}^{(\multiidx)} \mat{U}_0
-\!\!\!
\sum_{\substack{
                \multiidxSec_1 + \multiidxSec_2 = \multiidx \\ 
                \multiidxSec_1 \neq \multiidx 
                }
      }\!\!\!\!
\mat{K}^{(\multiidxSec_1)} \, \mat{C}^{(\multiidxSec_2)}
-
\mat{U}_0 \, \mat{C}^{(\multiidx)}.
\end{align*}
This implies $\ran\, \mat{K}^{(\multiidx)} \subseteq \rm{span}\{ \ran\, \mat{U}_0, \ran\, \mat{P}_{\!0}^{(\multiidx)},  \ran\, \mat{K}^{(\multiidxSec)} : |\multiidxSec| < \Ord \}$.
An induction argument over $n$ concludes the proof.
\end{proof}

Next, we provide an iterative procedure for the calculation of~$\mat{K}^{(\multiidx)}$.

Let us use the convention $\sum_{\emptyset} \coloneqq \mat{0}$ from now on. 
With the wisdom of hindsight, we introduce  
\begin{align} \label{eq:def_X_Y_multiidx}
\mat{X}^{(\multiidx)} 
\, \coloneqq 
\!\!\!\!
\sum_{\substack{
                \multiidxSec_1 + \multiidxSec_2 = \multiidx \\ 
                \multiidxSec_1 \notin \{ \vec{0}, \multiidx \}
                } 
      }\!\!\!\!
\mat{A}^{\!(\multiidxSec_1)} \mat{K}^{(\multiidxSec_2)}
\quad \text{and} \quad
\mat{Y}^{(\multiidx)}
\, \coloneqq 
\!\!\!\!
\sum_{\substack{
                \multiidxSec_1 + \multiidxSec_2 = \multiidx \\ 
                \multiidxSec_1 \notin \{ \vec{0}, \multiidx \} 
                } 
      }\!\!\!\!
\mat{K}^{(\multiidxSec_1)} \mat{B}^{(\multiidxSec_2)}  
\qquad \text{for } \multiidx \in \bb{N}_0^\Pdim.
\end{align}
Note that $\mat{X}^{(\multiidx)}$ and $\mat{Y}^{(\multiidx)}$ are thin matrices of size $\fdim \times m_0$ 
and that their computations involve only $\mat{K}^{(\multiidxSec)}$ and $\mat{B}^{(\multiidxSec)}$ with $|\multiidxSec| < |\multiidx|$.

\begin{theorem}[Recursive formula for $\mat{K}^{(\multiidx)}$] \label{thm:resursive_Kmu}
Consider the analytic matrix functions $\mat{K}(\prmtrVec)$, $\mat{B}(\prmtrVec)$ defined in \Cref{lem:projector_parametrization} 
and their Taylor-series expansions \eqref{eq:K_series_expansion} at $\prmtrVec_0$.
For $\multiidx \in \bb{N}_0^\Pdim$,
it holds that 
\begin{align} \label{eq:recursive_Kmu}
\mat{K}^{(\multiidx)}
\, = \,
\mat{S}_0 
\big(
-
\mat{A}^{\!(\multiidx)} \mat{U}_0 
- 
\mat{X}^{(\multiidx)}
+ 
\mat{Y}^{(\multiidx)}
    \big), 
\end{align}
where $\mat{S}_0$ is the reduced resolvent characterized by the property \eqref{eq:reduced_resolvent}, 
each $\mat{X}^{(\multiidx)}, \mat{Y}^{(\multiidx)}$ is defined in \eqref{eq:def_X_Y_multiidx},
$\mat{B}^{(\vec{0})} \coloneqq \EigVal_0 \mat{I}_{m_0}$
and 
\begin{align}\label{eq:def_B_multiidx}
\mat{B}^{(\multiidx)}
\, \coloneqq \,
\mat{U}_0^* \mat{A}^{\!(\multiidx)} \mat{U}_0
+
\mat{U}_0^*
\mat{X}^{(\multiidx)}
 \in \bb{C}^{m_0 \times m_0}
\qquad \text{for } |\multiidx| \in \bb{N}.
\end{align}
\end{theorem}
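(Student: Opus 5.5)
The plan is to start from identity \eqref{eq:matrix_equation_A_V_C} of \Cref{lem:projector_parametrization}, which together with $\mat{U}(\prmtrVec)\mat{C}(\prmtrVec)^{-1} = \mat{U}_0 + \mat{K}(\prmtrVec)$ reads
\begin{align*}
\mat{A}(\prmtrVec)\big(\mat{U}_0 + \mat{K}(\prmtrVec)\big) = \big(\mat{U}_0 + \mat{K}(\prmtrVec)\big)\mathbf{B}(\prmtrVec),
\end{align*}
and to compare Taylor coefficients on both sides. All functions involved are analytic near $\prmtrVec_0$, so the multivariate Cauchy product applies. Using $\mat{K}^{(\vec{0})} = \mat{0}$, the coefficient of order $\multiidx \neq \vec{0}$ on the left is $\mat{A}^{\!(\multiidx)}\mat{U}_0 + \mat{A}_0 \mat{K}^{(\multiidx)} + \mat{X}^{(\multiidx)}$. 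Here the term with $\multiidxSec_2 = \vec{0}$ drops out because $\mat{K}^{(\vec{0})}=\mat{0}$, so no further exclusion is needed in \eqref{eq:def_X_Y_multiidx}. The coefficient on the right is $\mat{U}_0\mathbf{B}^{(\multiidx)} + \mat{K}^{(\multiidx)}\mathbf{B}^{(\vec{0})} + \mat{Y}^{(\multiidx)}$.

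Next I identify $\mathbf{B}^{(\vec{0})}$. Evaluating $\mathbf{B}$ at $\prmtrVec_0$ gives $\mathbf{B}(\prmtrVec_0) = \mat{U}_0^*\mat{A}_0\mat{U}_0 = \EigVal_0\mat{I}_{m_0}$, since $\mat{U}(\prmtrVec_0)=\mat{U}_0$ and $\mat{C}(\prmtrVec_0)=\mat{I}_{m_0}$. The order-$\multiidx$ identity therefore becomes
\begin{align*}
(\mat{A}_0 - \EigVal_0\mat{I})\mat{K}^{(\multiidx)} = -\mat{A}^{\!(\multiidx)}\mat{U}_0 - \mat{X}^{(\multiidx)} + \mat{Y}^{(\multiidx)} + \mat{U}_0\mathbf{B}^{(\multiidx)}.
\end{align*}

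To obtain $\mathbf{B}^{(\multiidx)}$, I multiply this identity by $\mat{U}_0^*$ from the left. On the left-hand side, $\mat{U}_0^*(\mat{A}_0-\EigVal_0\mat{I}) = \mat{0}$ because $\mat{A}_0$ is Hermitian. On the right-hand side, $\mat{U}_0^*\mat{Y}^{(\multiidx)} = \mat{0}$ because $\mat{U}_0^*\mat{K}^{(\multiidxSec)} = \mat{0}$ for all $\multiidxSec$, as noted after \eqref{eq:K_series_expansion}. What remains is $\mathbf{B}^{(\multiidx)} = \mat{U}_0^*\mat{A}^{\!(\multiidx)}\mat{U}_0 + \mat{U}_0^*\mat{X}^{(\multiidx)}$, which is exactly \eqref{eq:def_B_multiidx}. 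This also confirms that the recursion is consistent, since $\mat{X}^{(\multiidx)}$ and $\mat{Y}^{(\multiidx)}$ involve only lower-order $\mat{K}$ and $\mathbf{B}$.

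Finally, I apply $\mat{S}_0$ to the identity and use \eqref{eq:reduced_resolvent} together with \eqref{eq:reduced_resolvent_orthogonality}. The left-hand side becomes $\mat{S}_0(\mat{A}_0-\EigVal_0\mat{I})\mat{K}^{(\multiidx)} = \mat{P}_{\!0}^\perp\mat{K}^{(\multiidx)} = \mat{K}^{(\multiidx)}$, because $\mat{P}_{\!0}\mat{K}^{(\multiidx)} = \mat{U}_0\mat{U}_0^*\mat{K}^{(\multiidx)} = \mat{0}$. On the right-hand side the term $\mat{S}_0\mat{U}_0\mathbf{B}^{(\multiidx)}$ vanishes, since $\mat{S}_0\mat{P}_{\!0} = \mat{0}$ and $\mat{U}_0 = \mat{P}_{\!0}\mat{U}_0$. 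This yields \eqref{eq:recursive_Kmu}.

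The only delicate step is the bookkeeping of the Cauchy products: one must check that the index exclusions $\multiidxSec_1\notin\{\vec{0},\multiidx\}$ in \eqref{eq:def_X_Y_multiidx} match precisely the terms separated out above. Beyond that, the argument is routine.
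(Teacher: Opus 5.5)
Your proof is correct and follows essentially the same route as the paper: expand the relation $\mat{A}(\prmtrVec)\big(\mat{U}_0+\mat{K}(\prmtrVec)\big)=\big(\mat{U}_0+\mat{K}(\prmtrVec)\big)\mat{B}(\prmtrVec)$, compare the coefficients of order $\multiidx$, and invert $\mat{A}_0-\EigVal_0\mat{I}$ on $\ran\,\mat{P}_{\!0}^\perp$ using $\mat{S}_0$. The only cosmetic difference is in the bookkeeping. The paper first multiplies by $\mat{P}_{\!0}^\perp$, and obtains the formula for $\mat{B}^{(\multiidx)}$ by expanding $\mat{B}(\prmtrVec)=\mat{U}_0^*\mat{A}(\prmtrVec)\big(\mat{U}_0+\mat{K}(\prmtrVec)\big)$ directly. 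You instead keep the $\mat{U}_0\mat{B}^{(\multiidx)}$ term, extract $\mat{B}^{(\multiidx)}$ by applying $\mat{U}_0^*$, and then remove that term with $\mat{S}_0\mat{P}_{\!0}=\mat{0}$.
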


Observe that each matrix $\mat{B}^{(\multiidx)} $ 
relies only on $\mat{A}^{\!(\multiidx)}$ and on $\mat{X}^{(\multiidx)}$, 
and the latter depends solely on the previously computed matrices $\mat{K}^{(\multiidxSec)}$ with $|\multiidxSec| < |\multiidx|$.
We will exploit \eqref{eq:recursive_Kmu} and \eqref{eq:def_B_multiidx} for the practical assembly procedure of the Taylor-RBM in \Cref{subsubsec:implementation_consideration}.

\begin{proof}
For \eqref{eq:recursive_Kmu}, 
we firstly multiply both sides of \eqref{eq:matrix_equation_A_V_C} with $\mat{P}_{\!0}^\perp$ from the left.
Using $\mat{P}_{\!0}^\perp \mat{U}_0 = \mat{0}$ by the definition of $\mat{U}_0$ 
as well as $\mat{P}_{\!0}^\perp \mat{K}(\prmtrVec) = \mat{K}(\prmtrVec)$ by \eqref{eq:def_Kmu},
we then obtain
\begin{align*}
\mat{P}_{\!0}^\perp \mat{A}(\prmtrVec) \big(\mat{U}_0 + \mat{K}(\prmtrVec)\big)
\, = \,
\mat{K}(\prmtrVec) \, \mat{B}(\prmtrVec).
\end{align*}
Expanding this with the Taylor series of $\mat{A}(\prmtrVec)$, $\mat{K}(\prmtrVec)$ and $\mat{B}(\prmtrVec)$,
and equating the coefficient of $(\prmtrVec-\prmtrVec_0)^{\multiidx}$ for each $\multiidx \in \bb{N}_0^\Pdim \backslash \{\vec{0}\}$,
we arrive at 
\begin{align*}
\mat{P}_{\!0}^\perp \mat{A}^{\!(\multiidx)} \mat{U}_0
+
\mat{P}_{\!0}^\perp \!\!\!\!\!  
\sum_{\substack{
                \multiidxSec_1 + \multiidxSec_2 = \multiidx \\ 
                \multiidxSec_2 \neq \vec{0}  
                }
      }\!\!\!\!
\mat{A}^{(\multiidxSec_1)} \mat{K}^{(\multiidxSec_2)}
\, = \,
\!\!\!
\sum_{\substack{
                \multiidxSec_1 + \multiidxSec_2 = \multiidx \\ 
                \multiidxSec_2 \neq \multiidx 
                }
      }\!\!\!\!
\mat{K}^{(\multiidxSec_1)} \mat{B}^{(\multiidxSec_2)}.
\end{align*}
The only term involving $\mat{K}^{(\multiidx)}$ in the sum on the left-hand side is $\mat{P}_{\!0}^\perp \mat{A}^{\!(\vec{0})} \mat{K}^{(\multiidx)} = \mat{A}_0 \mat{K}^{(\multiidx)} $,
where the equality follows from 
the facts that  $\mat{A}^{\!(\vec{0})} = \mat{A}_0$ commutes with $\mat{P}_{\!0}^\perp$ and $\mat{P}_{\!0}^\perp \mat{K}^{(\multiidx)} = \mat{K}^{(\multiidx)}$ by \eqref{eq:def_Kmu}. 
Besides, the only term involving $\mat{K}^{(\multiidx)}$ in the sum on the right-hand side is $\mat{K}^{(\multiidx)} \mat{B}^{(\vec{0})} = \EigVal_0 \mat{K}^{(\multiidx)}$.
Consequently, rearranging the above equation yields
\begin{align*}
\big(\mat{A}_0 - \EigVal_0 \mat{I} \big) \mat{K}^{(\multiidx)}
\, = \,
- \mat{P}_{\!0}^\perp \mat{A}^{\!(\multiidx)} \mat{U}_0
- 
\mat{P}_{\!0}^\perp \!\!\!\!\!  
\sum_{\substack{
                \multiidxSec_1 + \multiidxSec_2 = \multiidx \\ 
                \multiidxSec_2 \notin \{ \vec{0}, \multiidx \}  
                }
      }\!\!\!\!
\mat{A}^{(\multiidxSec_1)} \mat{K}^{(\multiidxSec_2)}
+
\!\!\!
\sum_{\substack{
                \multiidxSec_1 + \multiidxSec_2 = \multiidx \\ 
                \multiidxSec_2 \notin \{ \vec{0}, \multiidx \}  
                }
      }\!\!\!\!
\mat{K}^{(\multiidxSec_1)} \mat{B}^{(\multiidxSec_2)}.
\end{align*}
Observe that the right-hand side lies in $\ran \, \mat{P}_{\!0}^\perp$ 
--- the last sum does so since each $\mat{K}^{(\multiidxSec_1)}$ has its range in $\ran \, \mat{P}_{\!0}^\perp$ by \eqref{eq:def_Kmu}.
Applying the reduced resolvent $\mat{S}_0$ characterized in \eqref{eq:reduced_resolvent} to both sides and using the definition of $\mat{X}^{(\multiidx)}$ and of $\mat{Y}^{(\multiidx)}$ in \eqref{eq:def_X_Y_multiidx} yields the desired formula \eqref{eq:recursive_Kmu}.

\smallskip
Next, we show \eqref{eq:def_B_multiidx}.
Using the definition of $\mat{B}(\prmtrVec)$ from \Cref{lem:projector_parametrization}~\ref{item:define_B_useful_identity}
and the identity 
$\mat{U}(\prmtrVec) \, \mat{C}(\prmtrVec)^{-1} = \mat{U}_0 + \mat{K}(\prmtrVec)$
from \Cref{lem:projector_parametrization}~\ref{item:Kmu_definition_first_prop},
we see that  
\begin{align} \label{eq:B_mu}
\mat{B}(\prmtrVec)
\, = \,
\mat{U}_0^*  \mat{A}(\prmtrVec) \, \mat{U}_0
+ \mat{U}_0^*  \mat{A}(\prmtrVec) \, \mat{K}(\prmtrVec),
\end{align}
Expanding \eqref{eq:B_mu} with the Taylor series of $\mat{A}(\prmtrVec)$, $\mat{K}(\prmtrVec)$ and $\mat{B}(\prmtrVec)$, 
equating the coefficient of $(\prmtrVec-\prmtrVec_0)^{\multiidx}$ for each $\multiidx \in \bb{N}_0^\Pdim$,
observing that $\mat{K}^{(\vec{0})} = \mat{0}$ as well as $\mat{U}_0^* \mat{A}^{\!(\vec{0})} \mat{K}^{(\multiidx)} = \EigVal_0 \mat{U}_0^* \mat{K}^{(\multiidx)} = \mat{0}$,
and using the definition of $\mat{X}^{(\multiidx)}$ in \eqref{eq:def_X_Y_multiidx},
we obtain exactly the formula of $\mat{B}^{(\multiidx)}$ in \eqref{eq:def_B_multiidx}.
\end{proof}

\begin{remark}
[Simplification for univariate parameter dependence] \label{rem:univariate_simplification}
In the special case $\Pdim = 1$,
a multi-index $\multiidx \in \bb{N}_0^\Pdim$ reduces to a single non-negative integer $\Ord = |\multiidx|$.
\full{With the convention $\sum_{1}^0 \coloneqq \mat{0}$, 
the formulae \eqref{eq:recursive_Kmu} and  \eqref{eq:def_B_multiidx} are simplified to
\begin{align*}
\mat{K}^{(\Ord)}
\, = \,
\mat{S}_0 
\Big(
- \mat{P}_{\!0}^\perp \mat{A}^{\!(\Ord)} \mat{U}_0 
- 
\mat{P}_{\!0}^\perp  
\sum_{j=1}^{\Ord-1}
\mat{A}^{\!(\Ord -j)} \mat{K}^{( j)}
+
\sum_{j=1}^{\Ord-1}
\mat{K}^{(j)} \mat{B}^{(\Ord - j)}  
\Big)
\qquad \text{for }\Ord \geq 1,
\end{align*}
and
\begin{align*}
\mat{B}^{(\Ord)}
\, = \,
\mat{U}_0^* \mat{A}^{\!(\Ord)} \mat{U}_0
+
\mat{U}_0^*
\sum_{j=1}^{\Ord-1}
 \mat{A}^{\!(\Ord-j)} \mat{K}^{(j)}
\qquad \text{for }\Ord \geq 1,
\end{align*}
respectively.
In this case, 
the combinatorial complexity of decomposing multi-indices is reduced to the decomposition of a natural number into two.} 
\short{In other words,
the combinatorial complexity of decomposing multi-indices is reduced to the decomposition of a natural number into two.}
This is frequently observed in mathematical physics,
see e.g. \cite{Kato76,McW62}.
A further simplification would be hardly possible even in this univariate case.
The combinatorial complexity in \Cref{thm:resursive_Kmu} is intrinsic to the multivariate setting.
\end{remark}

\subsubsection{Implementation considerations} \label{subsubsec:implementation_consideration}
Let us continue with the setting of \Cref{subsec:practical_assembly_taylor_rbm}. 
From \Cref{thm:connection_P0_Y} and \ref{thm:resursive_Kmu},
we know that $\rm{span}\{ \ran\, \mat{P}_{\!0}^{(\multiidx)} : |\multiidx| \leq \Ord \}$ is equal to $\rm{span}\{ \mat{U}_0, \mat{K}^{(\multiidx)} : |\multiidx| \leq \Ord\}$,
which in turn can be computed recursively via \eqref{eq:recursive_Kmu} and \eqref{eq:def_B_multiidx}.
The formulae include the scaled partial derivatives $\mat{A}^{\!(\multiidx)}$ of the matrix function $\mat{A}(\prmtrVec)$,
the orthogonal projector $\mat{P}_{\!0}^\perp$, 
as well as the reduced resolvent $\mat{S}_0$ of $\mat{A}_0$ w.r.t.\! $\EigVal_0$.
Let us address these three issues in the following individually.

\begin{remark}[Affine decomposition and partial derivatives] \label{rem:affine_decomposition}
In usual RBM applications, 
the matrix function $\mat{A}(\prmtrVec)$ possesses an \emph{affine decomposition structure} in terms of the parameters, 
i.e., there exist parameter-independent Hermitian matrices $\mat{A}_1, \ldots, \mat{A}_\nrAffine$ 
and real-analytic functions $\theta_1(\prmtrVec), \ldots, \theta_\nrAffine(\prmtrVec)$ with explicit expressions, 
such that
\begin{align} \label{eq:affine_linear_decomposition}
\mat{A}(\prmtrVec)
\, = \,
\sum_{\idxAffine=1}^\nrAffine \theta_\idxAffine(\prmtrVec) \, \mat{A}_\idxAffine.
\end{align}
In this case, the scaled partial derivative of $\mat{A}(\prmtrVec)$ w.r.t. $\multiidx$ at $\prmtrVec_0$, 
namely $\mat{A}^{\!(\multiidx)}$, 
can be computed easily from \eqref{eq:affine_linear_decomposition} and from the derivatives of $\theta_\idxAffine(\prmtrVec)$ at $\prmtrVec_0$,
i.e., 
\begin{align*}
\mat{A}^{\!(\multiidx)}
\, = \,
\sum_{\idxAffine=1}^\nrAffine 
\theta_\idxAffine^{(\multiidx)}(\prmtrVec_0) \, 
\mat{A}_\idxAffine
\quad \text{with} \quad
\theta_\idxAffine^{(\multiidx)}(\prmtrVec_0)
\, \coloneqq \,
\frac{1}{\beta_1! \cdots \beta_\Pdim!} \, 
\frac{\partial^{|\multiidx|} \theta_\idxAffine}{\partial \prmtr_1^{\beta_1} \cdots \partial \prmtr_\Pdim^{\beta_\Pdim}}(\prmtrVec_0).
\end{align*}
\end{remark}

\begin{remark}
[Action of the orthogonal complement projector] \label{rem:orthogonal_complement_projector_action}
The orthogonal complement projector $\mat{P}_{\!0}^\perp$ of $\mat{P}_{\!0}$ doesn't need to be assembled explicitly.
Its action applied to any vector $\vec{v} \in \bb{C}^\fdim$ can be obtained via
\short{$\mat{P}_{\!0}^\perp \vec{v}
\, = \,
\vec{v} - \mat{P}_{\!0} \vec{v}
\, = \,
\vec{v} - \mat{U}_0 \mat{U}_0^* \vec{v}.
$}
\full{\begin{align*}
\mat{P}_{\!0}^\perp \vec{v}
\, = \,
\vec{v} - \mat{P}_{\!0} \vec{v}
\, = \,
\vec{v} - \mat{U}_0 \mat{U}_0^* \vec{v}.
\end{align*}}
\end{remark}

\begin{remark}
[Action of the reduced resolvent] \label{rem:reduced_resolvent_action}
Recall from \eqref{eq:reduced_resolvent} that 
the reduced resolvent $\mat{S}_0$ of $\mat{A}_0$ w.r.t. 
$\EigVal_0$ satisfies the equation $(\mat{A}_0 - \EigVal_0 \mat{I}) \, \mat{S}_0 = \mat{P}_{\!0}^\perp$.
Hence, for any vector $\vec{v} \in \bb{C}^\fdim$, 
we obtain $\mat{S}_0 \vec{v}$ by solving the linear equation system (LES)
\begin{align} \label{eq:linear_system_reduced_resolvent}
(\mat{A}_0 - \EigVal_0 \,\mat{I}) \, \vec{x} 
\, = \,
\mat{P}_{\!0}^\perp  \vec{v}.
\end{align}
Note that the matrix $\mat{A}_0 - \EigVal_0\,\mat{I}$ is singular, 
but the right-hand side $\mat{P}_{\!0}^\perp \, \vec{v}$ is always orthogonal to $ \ker \, \mat{A}_0 - \EigVal_0 \, \mat{I} =  \ran \, \mat{P}_{\!0}$.
Hence, the LES \eqref{eq:linear_system_reduced_resolvent} is still uniquely solvable on $\ran \, \mat{P}_{\!0}^\perp$. 

Practically, 
we can add a regularization term $\alpha \, \mat{P}_{\!0}$ with $\alpha >0$ to the matrix on the left-hand side of \eqref{eq:linear_system_reduced_resolvent}
and then apply a standard (matrix-free) solver on the modified regular LES
\begin{align} \label{eq:regularized_linear_system_reduced_resolvent}
(\mat{A}_0 - \EigVal_0 \,\mat{I} + \alpha \, \mat{P}_{\!0}) \, \vec{x} 
\, = \,
\mat{P}_{\!0}^\perp  \vec{v},
\end{align}
such as \texttt{MinRes}, \texttt{GMRES}, or even \texttt{CG} if $\EigVal_0$ is the smallest eigenvalue of $\mat{A}_0$.

By multiplying $\mat{P}_{\!0}$ to both sides of \eqref{eq:regularized_linear_system_reduced_resolvent},
we deduce that the solution of \eqref{eq:regularized_linear_system_reduced_resolvent} is orthogonal to $\ran \, \mat{P}_{\!0}$.
Thus, it is the desired norm-minimizing solution of \eqref{eq:linear_system_reduced_resolvent}. 
Also note that the action of $\mat{P}_{\!0}^\perp \, \vec{v}$ on the right-hand side is computed as in \Cref{rem:orthogonal_complement_projector_action}.
\end{remark}

In addition, 
we also note  
that for each multi-index $\multiidx$ with $|\multiidx| \leq \Ord$,
we can (and should) loop only once over all possible splittings $\multiidxSec_1 + \multiidxSec_2 = \multiidx$ with $\multiidxSec_1, \multiidxSec_2 \neq \vec{0}$ to compute $\mat{X}^{(\multiidx)}$ and $\mat{Y}^{(\multiidx)}$.
Besides, 
the term $\mat{X}^{(\multiidx)}$ defined in \eqref{eq:def_X_Y_multiidx} 
appears in both \eqref{eq:recursive_Kmu} and \eqref{eq:def_B_multiidx}
for each $\multiidx$,
so we should store it and reuse it for the calculation of $\mat{B}^{(\multiidx)}$.
Also note that $\mat{B}^{(\multiidx)}$ is only needed for $|\multiidx| < \Ord$.
To increase stability, 
we can orthonormalize existing outputs after the computation of $\{\mat{K}^{(\multiidx)} : |\multiidx| = j \}$. 

All in all, the implementation procedure of the Taylor-RBM is summarized in \Cref{alg:assembly_taylor_rbm_space}.

\begin{algorithm}[t]
\begin{algorithmic}[1]
	\REQUIRE
    A real-analytic Hermitian matrix function $\mat{A}(\prmtrVec) \in \bb{C}^{\fdim \times \fdim}$ with series expansion at $\prmtrVec_0 \in  \bb{R}^\Pdim$ as in \eqref{eq:real_analytic_system},
    an isolated eigenvalue $\EigVal_0$ of $\mat{A}_0 \coloneqq \mat{A}(\prmtrVec_0)$ of multiplicity $m_0$,
    a matrix $\mat{U}_0 \in \bb{C}^{\fdim \times m_0}$ whose columns form an ONB of the eigenspace $\ker (\mat{A}_0 - \EigVal_0 \mat{I})$, 
    and a target approximation order $\Ord \in \bb{N}_0$.
	\ENSURE 
    A matrix $\mat{V} \in \bb{C}^{\fdim \times r_{\!\Ord}}$ of orthonormal columns spanning the Taylor-RB space $\RBspace_{\!\Ord}(\prmtrVec_0)$ w.r.t. $\EigVal_0$ in the sense of \Cref{def:taylor_rbm}. 
	\STATE
    Initialize $\mat{V} \gets \mat{U}_0$ and  $\mat{B}^{(\vec{0})} \gets \EigVal_0 \mat{I}_{m_0}$. 
    \FOR{$j = 0$ \TO $\Ord$}
    \FORALL{$\multiidx \in \bb{N}_0^\Pdim$ with $|\multiidx| = j$}
    \STATE 
    Compute $\mat{X}^{(\multiidx)}\in \bb{C}^{\fdim \times m_0}$ and $\mat{Y}^{(\multiidx)}\in \bb{C}^{\fdim \times m_0}$ by \eqref{eq:def_X_Y_multiidx}. 
    \STATE 
    Compute $\mat{K}^{(\multiidx)}\in \bb{C}^{\fdim \times m_0}$ by \eqref{eq:recursive_Kmu}. 
    \STATE
    Compute $\mat{B}^{(\multiidx)}\in \bb{C}^{m_0 \times m_0}$ by \eqref{eq:def_B_multiidx} if $|\multiidx| < \Ord$.
    \ENDFOR 
    \STATE 
    Update $\mat{V} \gets \rm{orthonormalize}\{ \mat{V}, \mat{K}^{(\multiidx)} :\, |\multiidx| = j \}$.
    \ENDFOR
    \RETURN $\mat{V}$.
\end{algorithmic}
\caption{Efficient assembly of the Taylor-RB space w.r.t. an isolated eigenvalue of a parameter-dependent Hermitian matrix function.}
\label{alg:assembly_taylor_rbm_space}
\end{algorithm}

\section{Numerical Experiments}\label{sec:num_exp}

This section aims to illustrate the performance of the Taylor-RBM. 
Specifically, we will focus on 
(i)   studying the dimension growth of the Taylor-RB space in the derivative order,
(ii)  validating the theoretical error estimates of the Taylor-RBM, and
(iii) comparing the approximations from Taylor-RBM with that from PT. 

The numerical experiments are conducted on \texttt{Python}. 
The code and the experiment data is available on GitHub (\url{https://github.com/zhuoyaozeng/taylor_rbm}).

Throughout the experiments, 
full EVPs are solved with the \texttt{scipy.sparse.linalg.eigsh} function, 
and the reduced EVPs are solved with the \texttt{scipy.linalg.eigh} function,
both using machine precision as error tolerance.
The threshold for identifying eigenvalue clusters is $10^{-10}$.

\subsection{Model problem: xxz-chain quantum spin systems} \label{subsec:QSS}

For $L,j\in \bb{N}$ with $j \leq L$ and for any matrix $\mat{S} \in \bb{C}^{2 \times 2}$, 
we write  
\short{$\mat{S}^{(L,j)} \vcentcolon=  \mat{I}_{2^{j-1}} \otimes \mat{S} \otimes  \mat{I}_{2^{L-j}}  \in \bb{C}^{2^L \times 2^L}.$}
\full{\begin{align*}
    \mat{S}^{(L,j)} \vcentcolon=  \mat{I}_{2^{j-1}} \otimes \mat{S} \otimes  \mat{I}_{2^{L-j}}  \in \bb{C}^{2^L \times 2^L},
\end{align*}
where $\otimes$ denotes the Kronecker product of matrices. }

We will test our method on the 
\emph{xxz-chain model with open boundary conditions},
which is in the affine decomposition form \eqref{eq:affine_linear_decomposition} and defined for $\prmtrVec = (\prmtr_1,\prmtr_2)^{\!\top} \in \bb{R}^2$ by 
\begin{equation}\label{eqn:QSS:xxz:aff}
	\mat{A}(\prmtrVec) = \mat{A}_1+\prmtr_1 \mat{A}_2 - \prmtr_2 \mat{A}_3 \in \bb{C}^{2^L \times 2^L},
\end{equation}
where 
\begin{align*}
 \mat{A}_1 \vcentcolon= \frac{1}{4} \sum_{j=1}^{L-1} \! \Big( \mat{S}_{\rm{x}}^{(L,j)} \mat{S}_{\rm{x}}^{(L,j+1)} + \mat{S}_{\rm{y}}^{(L,j)} \mat{S}_{\rm{y}}^{(L,j+1)} \Big), 
 \quad 
 \mat{A}_2 \vcentcolon= \frac{1}{4} \sum_{j=1}^{L-1}  \mat{S}_{\rm{z}}^{(L,j)} \mat{S}_{\rm{z}}^{(L,j+1)} , 
 \quad 
 \mat{A}_3 \vcentcolon= \frac{1}{2} \sum_{j=1}^{L} \mat{S}_{\rm{z}}^{(L,j)} , 
 \quad 
\end{align*}
with $\mat{S}_{\rm{x}}, \mat{S}_{\rm{y}}, \mat{S}_{\rm{z}}$ denoting the \textit{spin-1/2 matrices}
\begin{align*}
    \mat{S}_{\rm{x}} \vcentcolon= 
    \left[
    \begin{array}{cc}
        0 & 1 \\
        1 & 0
    \end{array}
    \right],    \qquad 
    \mat{S}_{\rm{y}} \vcentcolon= 
    \left[
    \begin{array}{cc}
        0 & -\rm{i} \\
        \rm{i} & 0
    \end{array}
    \right],    \qquad 
    \mat{S}_{\rm{z}} \vcentcolon= 
    \left[
    \begin{array}{cc}
        1 & 0 \\
        0 & -1
    \end{array}
    \right]. 
\end{align*}

This is a classical model in quantum many-body physics, which has been extensively studied in the physics literature \cite{HSWR22, SRFB2004}.
It exhibits diverse spectral behaviors at different parameter values, 
providing good test cases for studying the Taylor-RBM.

\subsection{Dimension growth of the Taylor-RB space} \label{subsec:dim_growth}

By \Cref{rem:upper_bound_dim_RB},
the dimension of the Taylor-RB space of order $\Ord$ w.r.t.\! one single eigenvalue $\EigVal_0$ of multiplicity $m_0$ at parameter $\prmtrVec_0 \in \bb{R}^2$ is upper bounded by $m_0 \binom{\Ord + 2}{2} = m_0 {(\Ord + 2)(\Ord + 1)}/{2}$.

To gain insights of this aspect,
we assemble Taylor-RB spaces with the xxz-chain model of chain length $L=15$ at several parameter points,
i.e., the system matrix is of size $2^{15} \times 2^{15} = 32\,768 \times 32\,768$.
The different parameter points are chosen to be $\prmtrVec_0 \in \{(-1,0)^{\!\top}, (0,0)^{\!\top},(1,1)^{\!\top}, (-1,1)^{\!\top} \}$.
We construct the Taylor-RB spaces up to order $\Ord \leq 10$,
and consider both the Taylor-RB spaces  w.r.t.\! only the smallest eigenvalue cluster ($K=1$) and w.r.t.\! the first two eigenvalue clusters ($K=2$).
During the orthogonalization process, a threshold of $10^{-12}$ is used to identify the linear dependence of the generated vectors.
The results are shown in \Cref{tab:xxz-dimension-analysis}.

\begin{table}[ht]
    \centering
    \small
    \setlength{\tabcolsep}{2.8pt} 

    \begin{subtable}{0.49\textwidth}
        \centering
        \begin{tabular}{@{}c rcc rcc rcc rcc@{}}
            \toprule
            & \multicolumn{2}{c}{$ (-1,0)^{\!\top}$} && \multicolumn{2}{c}{$ (0,0)^{\!\top}$} && \multicolumn{2}{c}{$ (1,1)^{\!\top}$} && \multicolumn{2}{c}{$ (-1,1)^{\!\top}$} \\
            \cmidrule(lr){2-3} \cmidrule(lr){5-6} \cmidrule(lr){8-9} \cmidrule(lr){11-12}
             $\quad \Ord \quad $ & $r$ & UB && $r$ & UB && $r$ & UB && $r$ & UB \\ 
            \midrule
            0  & 16  & 16  && 2  & 2   && 1  & 1  && 1 & 1  \\
            1  & 48  & 48  && 4  & 6   && 3  & 3  && 1 & 3  \\
            2  & 96  & 96  && 6  & 12  && 6  & 6  && 1 & 6  \\
            3  & 160 & 160 && 8  & 20  && 8  & 10 && 1 & 10 \\
            4  & 240 & 240 && 12 & 30  && 13 & 15 && 1 & 15 \\
            5  & 336 & 336 && 14 & 42  && 15 & 21 && 1 & 21 \\
            6  & 448 & 448 && 20 & 56  && 21 & 28 && 1 & 28 \\
            7  & 576 & 576 && 26 & 72  && 27 & 36 && 1 & 36 \\
            8  & 720 & 720 && 33 & 90  && 35 & 45 && 1 & 45 \\
            9  & 880 & 880 && 38 & 110 && 43 & 55 && 1 & 55 \\
            10 & 1054& 1056&& 51 & 132 && 52 & 66 && 1 & 66 \\
            \bottomrule
        \end{tabular}
        \vspace{0.5em}
        \caption{Taylor-RB space dimension $r$ along the derivative order $\Ord$ targeting only the smallest eigenvalue $\EigVal_1$.
                 "UB" is the theoretical upper bound $m_1 \binom{n+2}{2}$ of the RB space dimension $r$, 
                 where $m_1$ is the multiplicity of $\EigVal_1$ and equals to $r$ for $\Ord=0$.}
        \label{tab:xxz-k1}
    \end{subtable}
    \hfill 
    \begin{subtable}{0.49\textwidth}
        \centering
        \begin{tabular}{@{}c rcc rcc rcc rcc@{}}
            \toprule
            & \multicolumn{2}{c}{$ (-1,0)^{\!\top}$} && \multicolumn{2}{c}{$ (0,0)^{\!\top}$} && \multicolumn{2}{c}{$ (1,1)^{\!\top}$} && \multicolumn{2}{c}{$ (-1,1)^{\!\top}$} \\
            \cmidrule(lr){2-3} \cmidrule(lr){5-6} \cmidrule(lr){8-9} \cmidrule(lr){11-12}
            $\quad \Ord \quad $ & $r$ & UB && $r$ & UB && $r$ & UB && $r$ & UB \\ 
            \midrule
            0  & 30  & 30  && 6   & 6   && 2   & 2   && 2  & 2   \\
            1  & 90  & 90  && 12  & 18  && 6   & 6   && 3  & 6   \\
            2  & 180 & 180 && 20  & 36  && 11  & 12  && 4  & 12  \\
            3  & 300 & 300 && 33  & 60  && 16  & 20  && 5  & 20  \\
            4  & 450 & 450 && 47  & 90  && 24  & 30  && 6  & 30  \\
            5  & 630 & 630 && 66  & 126 && 29  & 42  && 7  & 42  \\
            6  & 840 & 840 && 89  & 168 && 41  & 56  && 8  & 56  \\
            7  & 1080& 1080&& 113 & 216 && 52  & 72  && 9  & 72  \\
            8  & 1349& 1350&& 143 & 270 && 66  & 90  && 10 & 90  \\
            9  & 1643& 1650&& 173 & 330 && 81  & 110 && 11 & 110 \\
            10 & 1968& 1980&& 213 & 396 && 100 & 132 && 13 & 132 \\
            \bottomrule
        \end{tabular}
        \vspace{0.5em}
        \caption{Taylor-RB space dimension $r$ along the derivative order $\Ord$ targeting the first two eigenvalues $\EigVal_1$ and $\EigVal_2$.
                 "UB" refers to the theoretical upper bound $M \binom{n+2}{2}$ of $r$, 
                 where $M$ is the sum of the multiplicities of $\EigVal_1$ and $\EigVal_2$ and equals to $r$ for $\Ord=0$.}
        \label{tab:xxz-k2}
    \end{subtable}

    \caption{xxz-model of length $L=15$: List of dimension growth of the Taylor-RB spaces at four different parameter points. 
            The underlying space has dimension $2^L = 32\,768$. }
    \label{tab:xxz-dimension-analysis}
\end{table}

The dimension growth reported in \Cref{tab:xxz-dimension-analysis} reveals different local characteristics of the eigenvalue problem across the parameter space:

\begin{itemize}
\item 
\emph{Maximal growth at $\prmtrVec_0 = (-1,0)^{\!\top}$.} 
This point exhibits high initial degeneracy ($m_1 = 16$ and $m_2 = 14$). 
The subspace dimension grows at the same rate as the theoretical upper bound, 
implying that the ranges of the Taylor-derivatives are almost entirely linearly independent. 
This suggests a complex local structure of the spectral projector, 
where parameter variations explore a high-dimensional subspace in the Hilbert space without redundancy.

\item 
\emph{Moderate growth at $\prmtrVec_0 = (0,0)^{\!\top}$.} 
Although this point also possesses degenerate eigenvalues, 
the dimension growth of the Taylor-RB space is much slower compared to the maximal growth seen at $(-1,0)^{\!\top}$. 
This implies that a significant portion of vectors from ranges of the higher-order derivatives are linearly dependent, 
allowing for a more compact representation.

\item 
\emph{Rapid growth at $\prmtrVec_0 = (1,1)^{\!\top}$ despite a non-degenerate eigenvector.} 
At this point, the eigenvectors are simple ($m_1 = m_2 = 1$), 
yet the dimension of the Taylor-RB space grows rapidly and stays close to the theoretical limit. 
This indicates that while the eigenstate is non-degenerate, 
it is highly sensitive to the parameters $\prmtrVec$.

\item 
\emph{Parameter invariance at $\prmtrVec_0 = (-1,1)^{\!\top}$.} 
For the smallest eigenvalue ($K=1$), 
the dimension remains strictly constant ($r=1$). 
This indicates that the corresponding eigenspace is locally invariant around $\prmtrVec_0$. 
In other words, the higher-order Taylor-derivatives vanish or their ranges reside in the span of the initial eigenvector, 
making the expansion trivial.
\end{itemize}

We conclude that the comparison between the dimension of the Taylor-RB space and the combinatorial upper bound serves as a numerical measure of the local sensitivity or complexity of the total spectral projector.

\subsection{Validation of the subspace error bound} \label{subsec:validation}

Let us continue with the xxz-model of length $L=15$,
i.e, the underlying space has dimension $2^L = 2^{15} = 32\,768$. 
Our goal is to examine the convergence rate of the Taylor-RBM.
We consider the parameter point $\prmtrVec_0 = (1,1)^{\!\top}$, 
as it exhibits a non-degenerate smallest eigenvalue and a rapid growth of the Taylor-RB space dimension,
and we target the smallest eigenvalue cluster ($K=1$). 

\subsubsection{Qualitative validation}
For a uniform grid of $71 \times 71 = 5\,041$ parameters from the box 
$ \big\{ \prmtrVec \in \bb{R}^2 : \prmtrVec - (1,1)^{\!\top} \in [-0.08,\, 0.08]^2  \big\} $,
and Taylor-RB spaces of order $\Ord \leq 3$,
the approximation error $\|\mat{Q}_\rm{rb}(\prmtrVec) - \mat{P}(\prmtrVec)\|$
and the Ritz-value error $\rm{tr}\, \mat{A}(\prmtrVec)\big( \mat{Q}_\rm{rb}(\prmtrVec) - \mat{P}(\prmtrVec) \big)$
of the Taylor-RBM are computed. 
Again, a threshold of $10^{-12}$ is used to identify linear dependence during the orthogonalization process.
The dimensions of the Taylor-RB spaces are $r=1$ for $\Ord=0$, $r=3$ for $\Ord=1$, $r=6$ for $\Ord=2$, and $r=8$ for $\Ord=3$.
The results are shown in \Cref{fig:expB_heatmaps}.

\begin{figure}[htbp]
    \centering
    \begin{subfigure}[b]{\textwidth}
        \centering
        \includegraphics[width=\textwidth, trim = 0mm 5mm 0mm 7mm]{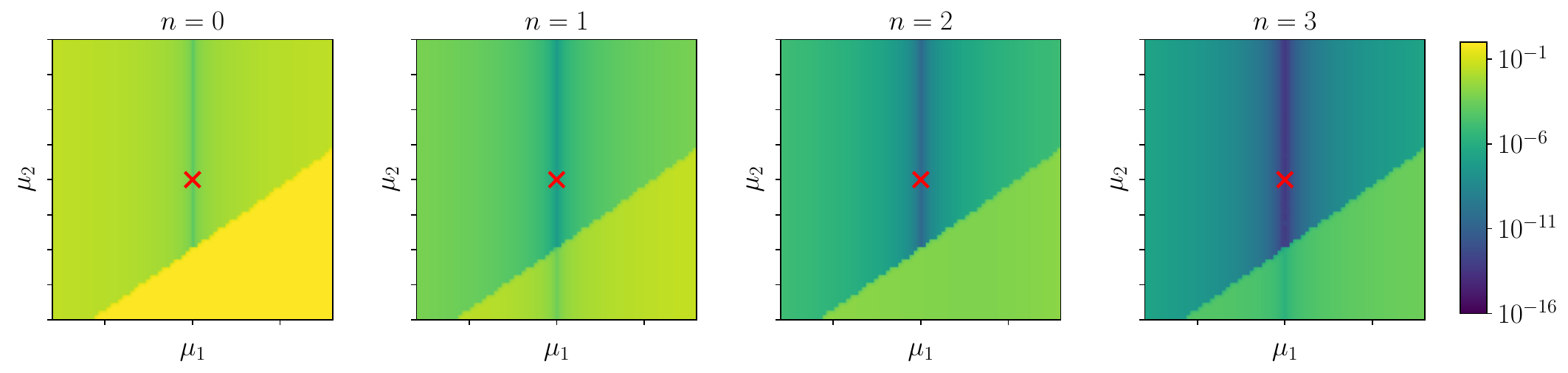}
        \caption{Approximation error $\|\mat{Q}_\rm{rb}(\prmtrVec) - \mat{P}(\prmtrVec)\|$ of the Taylor-RBM of order $\Ord$.}
    \end{subfigure}\\
    \vspace{0.2cm}
    \begin{subfigure}[b]{\textwidth}
        \centering
        \includegraphics[width=\textwidth, trim = 0mm 5mm 0mm 0mm]{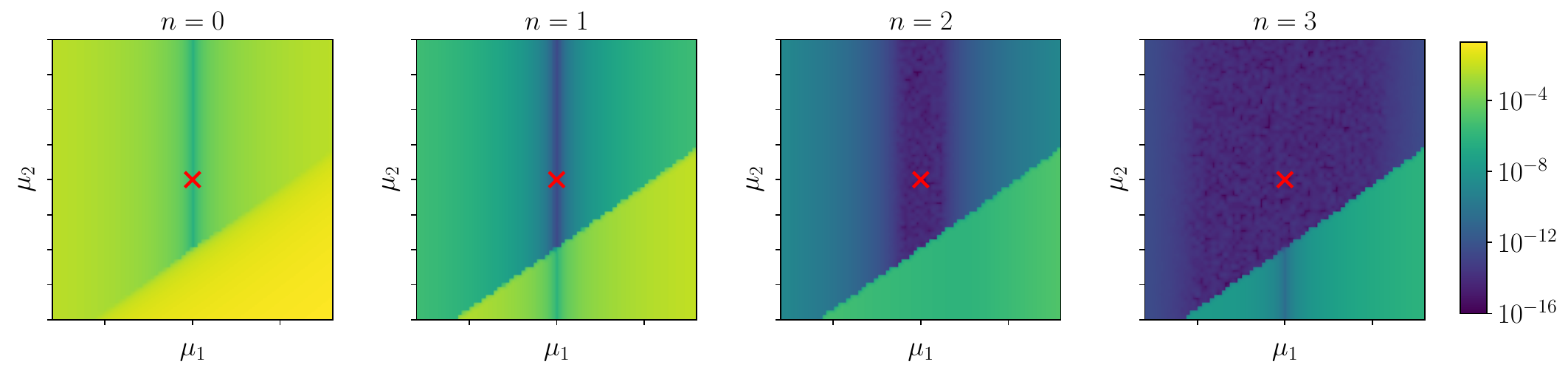}
        \caption{Error of Ritz-value sum $\rm{tr}\, \mat{A}(\prmtrVec)\big( \mat{Q}_\rm{rb}(\prmtrVec) - \mat{P}(\prmtrVec) \big)$  of the Taylor-RBM of order $\Ord$.}
    \end{subfigure}
    \caption{xxz-model of length $L=15$: 
        Approximation error and error of Ritz-value sum of the total eigenspace of rank $m_1 $ at parameters $\prmtrVec$ with $\prmtrVec - \prmtrVec_0 \in [-0.08,\, 0.08]^2$ using Taylor-RBM errors of order~$\Ord$,
        where $\prmtrVec_0 =(1,1)^{\!\top}$ is marked in red cross and has $m_1 = 1$.} 
    \label{fig:expB_heatmaps}
\end{figure}

The heatmaps presented in \Cref{fig:expB_heatmaps} confirms the theoretical predictions from \Cref{sec:Taylor_rbm}, 
which states that the approximation quality for both the total eigenprojector and the Ritz-value sum improves as the order $\Ord$ increases in the neighborhood of the expansion point $\prmtrVec_0 = (1,1)^{\!\top}$. 
More specifically, the following observations can be made: 

\begin{itemize}
\item 
\emph{Directional sensitivity.}
The error distributions show a vertical alignment.
This phenomenon is consistent with the physical nature of the model \cite{SRFB2004}
and indicates that the eigenvector is more sensitive to perturbations along the $\prmtr_1$ direction than along the $\prmtr_2$ direction.

\item 
\emph{Comparison of error metrics.} 
The Ritz-value approximations converge rougly in squared order faster than those for eigenspaces, 
as shown by comparing the upper and lower heatmaps. 
This agrees with \Cref{thm:energy_difference_rb}, which predicts quadratic convergence of eigenvalue errors relative to subspace errors.

\item 
\emph{Behavior under eigenvalue crossings.} 
The boarder line of the lower-right region of the parameter box corresponds to a point where the spectral gap of $\mat{A}(\prmtrVec)$ vanishes, 
i.e., where eigenvalue crossing occurs.
While this lower-right region exhibits high localized error for $\Ord=0$, 
the Taylor-RBM seems to be able to effectively provide a relatively good approximation as $\Ord$ increases. 
\end{itemize}

\subsubsection{Quantitative validation}
Let us now take a closer look at the error convergence rates of the Taylor-RBM by focusing on the single parameter path from $\prmtrVec_0 = (1,1)^{\!\top}$ to $\prmtrVec = (1.08, 1)^{\!\top}$,
i.e., the horizontal center line starting at $(1,1)^{\!\top}$ of the heatmaps in \Cref{fig:expB_heatmaps}.
The selected results of $\prmtr_1-1 \in [10^{-4},8\cdot 10^{-2}]$ are shown in \Cref{fig:expB_convergence},
from which we make the following observations:
\begin{itemize}
\item 
\emph{Asymptotic convergence rates.} 
In the log-log representation, 
the error curves for different orders $\Ord$ exhibit almost constant slopes for $\mu_1 - 1 < 0.0572$. 
Specifically in \Cref{subfig:expB_subspace_error_rate},
the  empirical slopes of the approximation error calculated with the scatter points are approximatedly 
$0.999$ for $\Ord=0$, $1.998$ for $\Ord=1$, $2.993$ for $\Ord=2$, and $3.975$ for $\Ord=3$.
We see that the Taylor-RBM achieves the expected algebraic convergence rate local to the expansion point before the influence of eigenvalue crossing comes into play.
\item 
\emph{Impact of eigenvalue crossings.} 
Both plots exhibit non-smooth changes of the lines at $\mu_1 - 1 \approx 0.0572$. 
This point corresponds to the parameter value where the spectral gap of $\mat{A}(\prmtrVec)$ vanishes (eigenvalue crossing),
cf.\! the lower right regions in \Cref{fig:expB_heatmaps}. 
\item 
\emph{Validation of eigenprojector error bounds.} 
\Cref{thm:Taylor_rbm_projector_error} is validated by \Cref{subfig:expB_subspace_error_rate}, 
as the computed approximation error is strictly bounded by the derived estimator across the stable region of the parameter domain. 
We observe that the gap between the estimator and the actual error increases with the order $\Ord$. 
This is consistent with the theory, 
as the term involving the squared residual $\rm{Res}^2(\prmtrVec, \Pi_{\Ord})$ is monotonically increasing in $\Ord$.
\item 
\emph{Quadratic relationship of spectral errors.} 
\Cref{thm:energy_difference_rb} is supported by this numerical example,
as the Ritz-value convergence rate in \Cref{subfig:expB_ritz_error_rate} is approximately double that of the corresponding subspace convergence in \Cref{subfig:expB_subspace_error_rate}.
\item 
\emph{Robustness of higher-order spaces.} 
Despite the loss of theoretical guarantees beyond the crossing point, 
increasing the order to $\Ord=3$ significantly reduces the absolute error magnitude compared to lower-order approximations. 
This indicates that the Taylor-RB space seems to be an expressive subspace for the approximation of the eigenprojector $\mat{P}(\prmtrVec)$ in the pre-asymptotic regime.
\end{itemize}

\begin{figure}[htbp]
    \centering
    \begin{subfigure}[b]{0.48\textwidth} 
        \centering
        \includegraphics[width=0.9\textwidth, trim = 0mm 3mm 0mm 6mm]{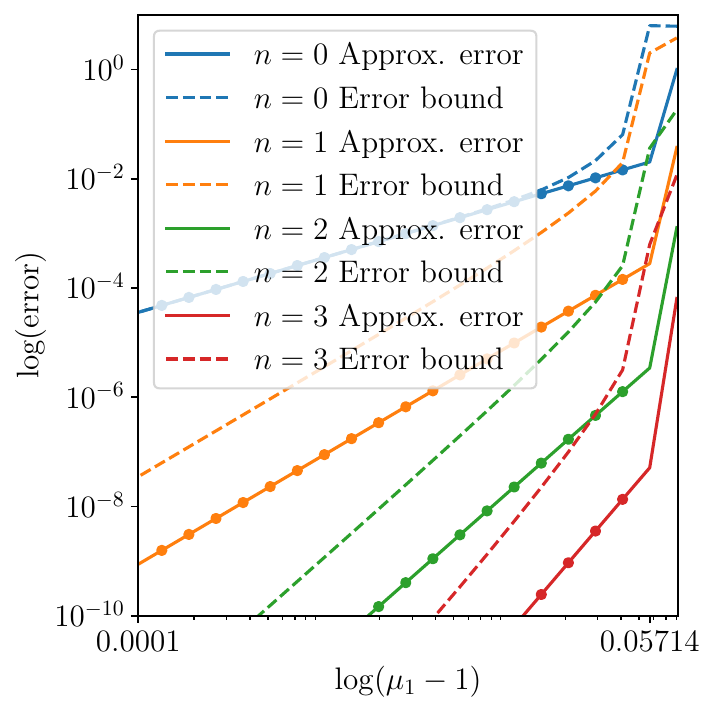}
        \caption{Left and right hand side of \Cref{thm:Taylor_rbm_projector_error} of order $\Ord$ along the horizontal parameter path.}
        \label{subfig:expB_subspace_error_rate}
    \end{subfigure}
    \hfill
    \begin{subfigure}[b]{0.48\textwidth}
        \centering
        \includegraphics[width=0.9\textwidth, trim = 0mm 3mm 0mm 6mm]{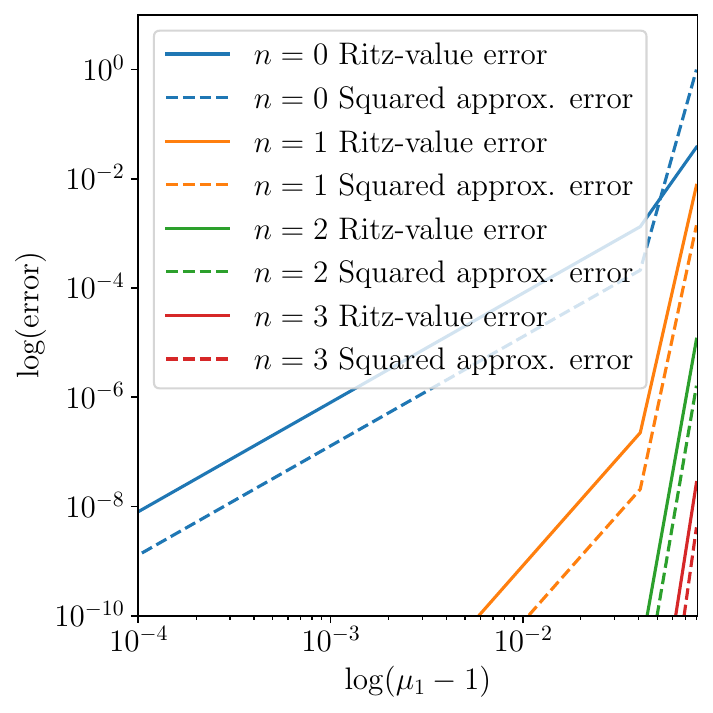}
        \caption{The Ritz-value error and the squared approximation error of order $\Ord$ along the horizontal parameter path.}
        \label{subfig:expB_ritz_error_rate}
    \end{subfigure}
    \caption{xxz-model of length $L=15$: 
             Sections of the error curves of the Taylor-RBM with increasing derivative order $\Ord$ along the parameter path from $(1,1)^{\!\top}$ to $(1.08, 1)^{\!\top}$.} 
    \label{fig:expB_convergence}
\end{figure}

Overall, the numerical experiments confirm the theoretical error estimates and demonstrate the practical effectiveness of the Taylor-RBM for approximating eigenprojectors in parametric EVPs.

\subsection{Comparison with classical perturbation theory} \label{subsec:comparison_PT}

As we have known from \Cref{subsec:truncated_projector_series}, 
the approximation of eigenspaces using PT requires explicitly building the truncated Taylor-series of the projector, 
which is in general a series of dense matrices of the same size as the original problem.
This makes the PT approach computationally infeasible for large-scale problems.
Hence, we restrict ourselves to a smaller xxz-model of length $L=10$ (underlying space of dimension $2^{10} = 1\,024$) to perform a direct comparison with Taylor-RBM. 
The smallest eigenvalue cluster ($K=1$) of the parameter point $\prmtrVec_0 = (-1,0)^{\!\top}$ is selected for the comparison, 
as the smallest eigenvalue has multiplicity $11$ at this point.
The tolerance for identifying linear dependence during the orthogonalization process is set to $10^{-16}$.
Also note that $\mat{Q}_{\rm{pt}}(\prmtrVec)$ as defined in \eqref{eq:def_Q_pt} can be determined by a truncated SVD of $\mat{P}^{[\Ord]}(\prmtrVec)$.

\subsubsection{Qualitative validation}
For a uniform grid of $71 \times 71 = 5\,041$ parameters from the box 
$ \big\{ \prmtrVec \in \bb{R}^2 : \prmtrVec - (-1,0)^{\!\top} \in [-0.03,\, 0.03]^2  \big\} $,
the approximation errors of order $\Ord \leq 3$ using Taylor-RBM $\|\mat{Q}_\rm{rb}(\prmtrVec) - \mat{P}(\prmtrVec)\|$
and PT $\|\mat{Q}_\rm{pt}(\prmtrVec) - \mat{P}(\prmtrVec)\|$ are computed. 
The resulting Taylor-RB spaces have dimensions $11, 33, 66, 110$ for $n=0,1,2,3$, respectively. 
It is also worth noting that even in this smaller setting, 
the computational time of the PT approach is at least $45\%$ longer than that of the Taylor-RBM,
and this relative time ratio is observed to grow as $\Ord$ increases. 
The numerical results are shown in \Cref{fig:expC_heatmaps},
and they  demonstrate the local convergence properties and the global behavior of both approximation schemes. 
Several mathematical features are noteworthy:

\begin{itemize}
\item 
\emph{Spectral gap and convergence domain.} 
The boundary of the characteristic "olive-shaped" region of low error corresponds to the parameter domain, 
where the spectral gap between the 11th and 12th eigenvalues (counting multiplicities) vanishes.
Beyond this boundary, 
we would not expect the classical PT to be applicable anymore. 

\item 
\emph{Verification of local convergence.} 
Within the interior of the "olive-shaped" region, 
both the Taylor-RBM and the PT approach demonstrate systematic error reduction as the order $\Ord$ increases. 
This confirms that the ranges of the Taylor derivatives of the eigenprojector effectively capture the locally varying eigenspace. 
The persistent dark vertical line through $\prmtrVec_0 = (-1,0)^{\!\top}$ indicates a direction of minimal spectral variation, 
which is a physical property of the model that has also been observed in \Cref{fig:expB_heatmaps}.

\item 
\emph{Extrapolation and subspace expressivity.} 
A significant distinction arises outside the convergence region. 
While the PT error stagnates or diverges 
— as it is restricted by the analytical validity of the power series — 
the Taylor-RBM continues to reduce the error in certain exterior regions as $\Ord$ increases. 
This again suggests that the Taylor-RB space would possess an expressivity that extends beyond the convergence radius of the series itself. 
\end{itemize}

\begin{figure}[htbp]
    \centering
    \begin{subfigure}[b]{\textwidth}
        \centering
        \includegraphics[width=\textwidth, trim = 0mm 5mm 0mm 10mm]{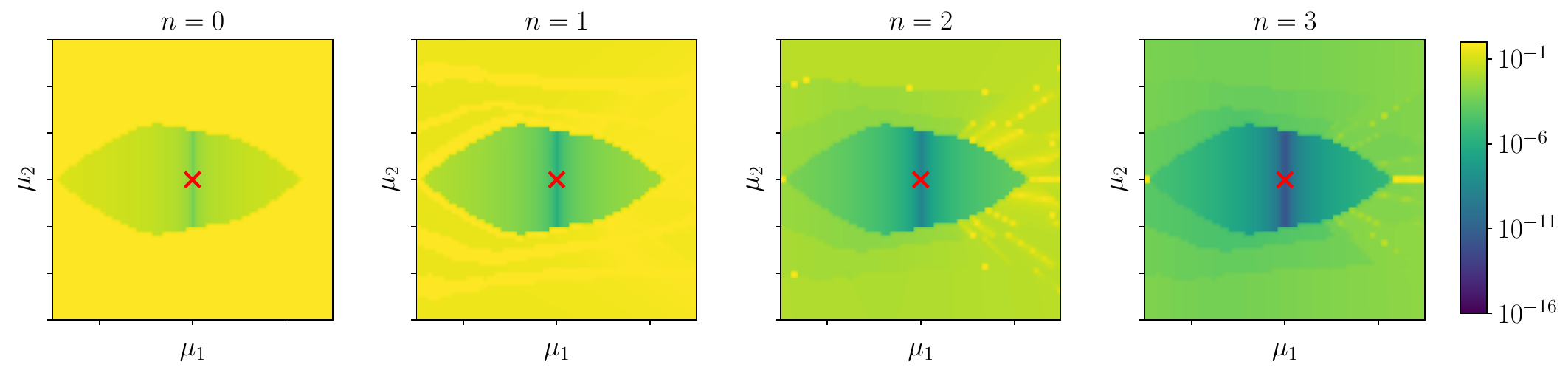}
        \caption{Approximation error $\|\mat{Q}_\rm{rb}(\prmtrVec) - \mat{P}(\prmtrVec)\|$ of the Taylor-RBM of order $\Ord$.}
    \end{subfigure}\\
    \vspace{0.2cm}
    \begin{subfigure}[b]{\textwidth}
        \centering
        \includegraphics[width=\textwidth, trim = 0mm 5mm 0mm 0mm]{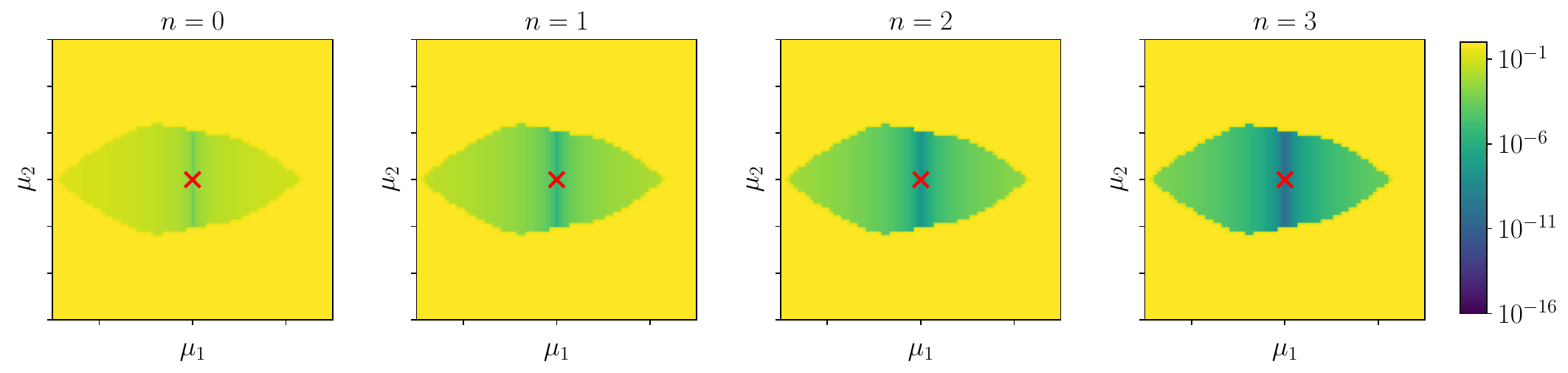}
        \caption{Approximation error $\|\mat{Q}_\rm{pt}(\prmtrVec) - \mat{P}(\prmtrVec)\|$ of the PT approach.}
    \end{subfigure}
    \caption{xxz-model of length $L=10$: 
            Approximation errors of the total eigenspace of rank $m_1 $ at parameters $\prmtrVec$ with $\prmtrVec - \prmtrVec_0 \in [-0.03,\, 0.03]^2$ 
            using Taylor-RBM and PT of order $\Ord \leq 3$ concerning the smallest eigenvalue cluster at $\prmtrVec_0  = (-1,0)^{\!\top}$, 
            which is marked in red and  has degeneracy $m_1 = 11$. 
            } 
    \label{fig:expC_heatmaps}
\end{figure}

\subsubsection{Quantitative validation}
To further compare the methods, 
we again focus on the parameter path from $\prmtrVec_0 = (-1,0)^{\!\top}$ to $\prmtrVec = (-0.97, 0)^{\!\top}$,
i.e., the horizontal center line starting at $(-1,0)^{\!\top}$ of the heatmaps in \Cref{fig:expC_heatmaps}.
The selected results of $\prmtr_1 + 1 \in [10^{-5},3\cdot 10^{-2}]$ are shown in \Cref{fig:expC_comparison}.

\begin{figure}[htbp]
    \centering

        \centering
        \includegraphics[width=0.9\textwidth, trim = 5mm 8mm 5mm 5mm]{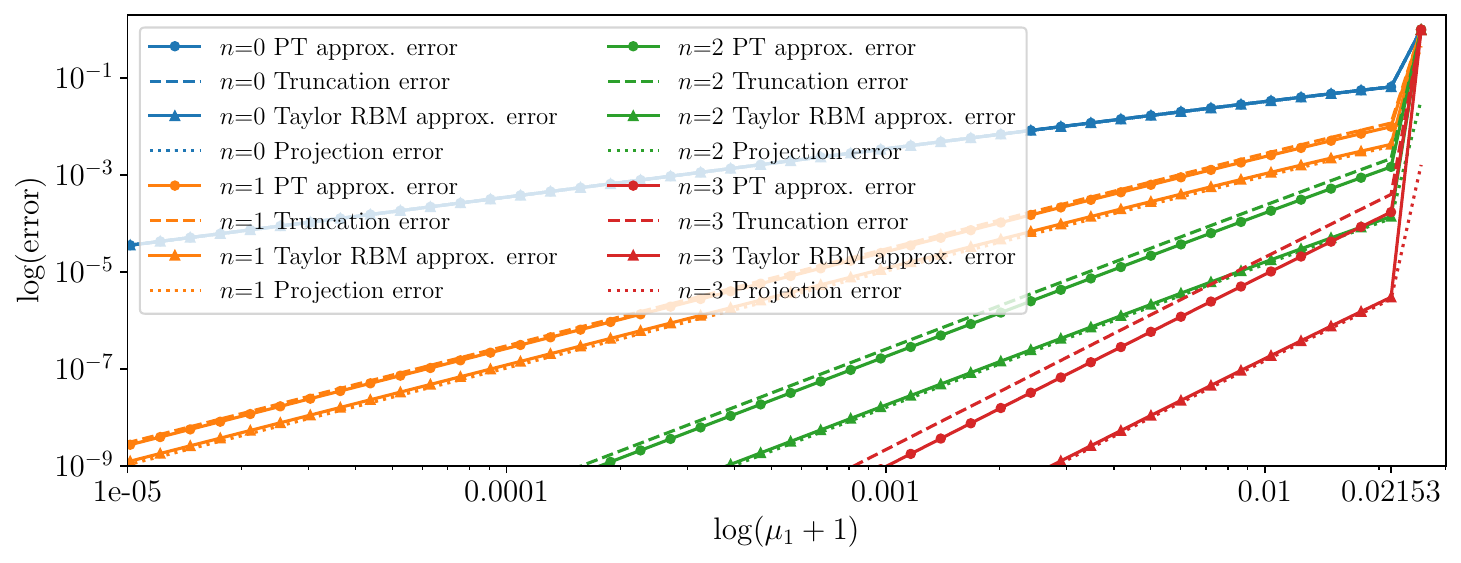}
    \caption{xxz-model of length $L=10$: 
            Sections of the error curves of the Taylor-RBM with increasing derivative order $\Ord$ along the horizontal parameter path from $(-1,0)^{\!\top}$ to $(-0.97, 0)^{\!\top}$.
            "PT approx.\! error" refers to $\| {\mat{Q}_{\rm{pt}}}(\prmtrVec) - \mat{P}(\prmtrVec) \|$, 
            and "truncation error" is $\| \mat{P}^{[\Ord]}(\prmtrVec) - \mat{P}(\prmtrVec) \|$, 
            cf.\! the proof of \Cref{lem:closedness_test_space}.  
            By "Taylor-RBM approx.\! error" and "projection error", 
            we mean $\| {\mat{Q}_{\rm{rb}}}(\prmtrVec) - \mat{P}(\prmtrVec) \|$ and $\|{\RBproj}_\Ord^\perp \mat{P}(\prmtrVec)\|$, respectively,
            cf.\! \Cref{thm:Taylor_rbm_projector_error}.} 
    \label{fig:expC_comparison}
\end{figure}

We make the following observations from the error curves displayed in \Cref{fig:expC_comparison}:
\begin{itemize}
\item 
\emph{Asymptotic convergence rates.} 
For $\prmtr_1 + 1 < 0.0215$, 
the error curves in the log-log plot exhibit constant slopes, 
confirming algebraic convergence. 
The empirical slopes of the Taylor-RBM ($\approx 0.98, 1.96, 2.92, 3.86$) and of the PT approach ($\approx 0.98, 1.97, 2.94, 3.90$) for orders $\Ord = 0, 1, 2, 3$, respectively, 
are in good agreement with the theoretical prediction of $\bigO(|\prmtrVec - \prmtrVec_0|^{n+1})$ convergence for the eigenprojector approximation.

\item 
\emph{Identification of eigenvalue crossing.} 
The sharp change of slope at $\prmtr_1 + 1 \approx 0.0215$ marks the exit from the guaranteed analytical convergence domain (the ``olive-shaped'' region in \Cref{fig:expC_heatmaps}).
This point corresponds to the parameter value where the spectral gap between the target cluster and the rest of the spectrum vanishes.

\item 
\emph{Consistency with the theory.} 
The PT approximation error is consistently bounded by the truncation error, 
as stated in \Cref{lem:closedness_test_space}, 
while the Taylor-RBM approximation error remains below the projection error, 
as claimed by \Cref{thm:Taylor_rbm_projector_error}. 
Notably, the gap between the truncation error and the projection error widens as $\Ord$ increases. 
This is a non-surprising effect due to the rapid growth of the reduced basis dimension, 
as the projection error automatically decreases with an increasing subspace.

\item 
\emph{Optimality of the reduced basis.} 
The Taylor-RBM approximation error closely tracks the projection error, 
while the PT error tracks the truncation error. 
Since the projection error is by definition the best approximation within the span of the ranges of the Taylor-derivatives, 
we may expect that the Taylor-RBM would tend to outperform the PT approach, 
provided that the error prefactor in \Cref{thm:Taylor_rbm_projector_error} is not too large. 
This performance gap becomes more pronounced at higher orders, 
and this is again aligned with the fact that the dimension of the Taylor-RB space grows fast. 

\end{itemize}

Although not presented, 
the Ritz-value sums for the Taylor-RBM are lower than those of the PT approach, 
as dictated by the variational principle. 
All in all, 
the Taylor-RBM is theoretically much less expensive to assemble for large-scale systems 
and empirically tends to provide superior approximation quality both within and slightly beyond the analytical convergence radius, 
so it is mathematically and computationally preferred over classical PT for eigenspace approximations.


\section{Conclusion and outlook}\label{sec:conclusion}

Taylor-reduced basis method (Taylor-RBM) for multivariate parametric eigenvalue problems (pEVPs) is a local model order reduction technique, 
which constructs a subspace using the ranges of the Taylor series coefficients of the eigenprojector w.r.t.\! the reference parameter.
It is an intersection between the Rayleigh-Ritz method for spectral approximations, 
the classical multivariate analytic perturbation theory,
and the reduced basis method. 

In this work, we have firstly revisited subspace methods for spectral approximations in \Cref{sec:rayleigh_ritz},
where we have in particular emphasized \Cref{thm:Cea_EVP}, 
which relates the eigenspace approximation error by eigenspace projection error,
as well as \Cref{lem:energy_difference_general},
which concisely shows the squaring effect of the Ritz-value sum approximation error w.r.t.~the eigenspace approximation error. 

Then in \Cref{sec:analyticity}, 
we have revisited the multivariate analytic perturbation theory and provided explicit formulae for directly or recursively computing the Taylor coefficients of the eigenprojector. 
Besides, we have conducted a detailed analysis of the spectral approximation using the truncated power series of the eigenprojector in \Cref{subsec:truncated_projector_series}. 

With all preparations, we have defined and analyzed the convergence properties of the Taylor-RBM for pEVPs in \Cref{sec:Taylor_rbm}. 
More importantly, 
we have discussed and derived the computationally efficient algorithm for assembling the Taylor reduced basis space in \Cref{subsec:practical_assembly_taylor_rbm}. 

Finally, we have presented some numerical experiments in \Cref{sec:num_exp} using the xxz-chain model 
to illustrate the Taylor-RBM and to compare it with the spectral approximation using the truncated power series of the eigenprojector.
Our numerical results have shown 
that the Taylor-RBM can achieve a much better approximation than the truncated power series of the eigenprojector,
even at parameter regime beyond theoretical convergene guarantee. 

Because of the observations in the numerical experiments, 
we believe that the Taylor-RBM can be used to accelerate the assembly of an RB space for the spectral approximation of pEVPs over a large parameter domain,
or to generate a more expressive and more compact reduced basis. 
This naturally hints a direction for future studies.


\subsection*{Acknowledgments}
The authors sincerely thank Prof.\! Dr.\! Timo Weidl and Prof.\! Dr.\! Jens Wirth for the useful discussions on perturbation theory. 

Funded by Deutsche Forschungsgemeinschaft (DFG, German Research Foundation) under Germany's Excellence Strategy - EXC 2075 - 390740016. 
We acknowledge the support by the Stuttgart Center for Simulation Science (SimTech).

\bibliographystyle{macros/plain-doi}
\bibliography{macros/journalabbr, macros/Literature}    

@book{Ant05,
  author    = {Antoulas, Athanasios C.},
  title     = {Approximation of Large-Scale Dynamical Systems},
  publisher = {Society for Industrial and Applied Mathematics (SIAM)},
  year      = {2005},
  series    = {Advances in Design and Control},
  isbn      = {978-0-89871-658-0},
  doi       = {10.1137/1.9780898718713}
}

@Article{AlgBB25,
  author   = {Alghamdi, M. and Boffi, D. and Bonizzoni, F.},
  journal  = CompApplMath,
  title    = {A greedy {MOR} method for the tracking of eigensolutions to parametric elliptic {PDE}s},
  year     = {2025},
  issn     = {0377-0427},
  pages    = {116270},
  volume   = {457},
  doi      = {10.1016/j.cam.2024.116270},
  url      = {https://www.sciencedirect.com/science/article/pii/S0377042724005193},
}

@incollection{AndS12,
	address = {Berlin, Heidelberg},
	title = {Sparse {tensor} {approximation} of {parametric} {eigenvalue} {problems}},
	volume = {83},
	isbn = {978-3-642-22060-9 978-3-642-22061-6},
	url = {https://link.springer.com/10.1007/978-3-642-22061-6_7},
	language = {en},
	urldate = {2025-08-03},
	booktitle = {Numerical {Analysis} of {Multiscale} {Problems}},
	publisher = {Springer Berlin Heidelberg},
	author = {Andreev, R. and Schwab, C.},
	editor = {Graham, Ivan G. and Hou, Thomas Y. and Lakkis, Omar and Scheichl, Robert},
	year = {2012},
	doi = {10.1007/978-3-642-22061-6_7},
	note = {Series Title: Lecture Notes in Computational Science and Engineering},
	pages = {203--241},
}

@incollection{BabO91,
  author    = {Babu{\v{s}}ka, Ivo and Osborn, John E.},
  title     = {Eigenvalue problems},
  booktitle = {Handbook of Numerical Analysis},
  volume    = {2},
  editor    = {Ciarlet, P. G. and Lions, J.-L.},
  publisher = {Elsevier (North-Holland)},
  address   = {Amsterdam},
  year      = {1991},
  pages     = {641--787},
  doi       = {10.1016/S1570-8659(05)80042-0}
}

@book{Bau84,
url = {https://doi.org/10.1515/9783112721810},
title = {Analytic Perturbation Theory for Matrices and Operators},
author = {H. Baumgärtel},
publisher = {De Gruyter},
address = {Berlin, Boston},
doi = {doi:10.1515/9783112721810},
isbn = {9783112721810},
year = {1984},
lastchecked = {2025-09-04}
}

@Article{BenGW15,
  author  = {Benner, P. and Gugercin, S. and Willcox, K.},
  journal = SIAMReview,
  title   = {A Survey of Projection-Based Model Reduction Methods for Parametric Dynamical Systems},
  year    = {2015},
  month   = jun,
  pages   = {483--531},
  volume  = {57},
  doi     = {10.1137/130932715},
}

@book{BGTetal20,
  title     = {Snapshot-Based Methods and Algorithms},
  booktitle = {Model Order Reduction},
  editor    = {Benner, Peter and Grivet-Talocia, Stefano and Quarteroni, Alfio and Rozza, Gianluigi and Schilders, Wil and Silveira, Lu{\'i}s Miguel},
  publisher = {De Gruyter},
  location  = {Berlin and Boston},
  date      = {2020},
  year      = {2020},
  volume    = {2},
  doi       = {10.1515/9783110671490},
  isbn      = {9783110671490},
  url       = {https://www.degruyter.com/serial/mor-b/html}
}

@book{BTGetal20total,
  title     = {Model Order Reduction},
  editor    = {Benner, Peter and Grivet-Talocia, Stefano and Quarteroni, Alfio and Rozza, Gianluigi and Schilders, Wil and Silveira, Lu{\'i}s Miguel},
  publisher = {De Gruyter},
  location  = {Berlin and Boston},
  date      = {2020/2021},
  year      = {2020},
  volumes   = {3},
  url       = {https://www.degruyter.com/serial/mor-b/html},
  note      = {Multi-volume handbook. Vol.~1 DOI: 10.1515/9783110498967; Vol.~2 DOI: 10.1515/9783110671490; Vol.~3 DOI: 10.1515/9783110499001}
}

@article{BhaR97,
  author  = {Bhatia, Rajendra and Rosenthal, Peter},
  title   = {How and Why to Solve the Operator Equation {$AX - XB = Y$}},
  journal = {Bulletin of the London Mathematical Society},
  year    = {1997},
  volume  = {29},
  number  = {1},
  pages   = {1--21},
  doi     = {10.1112/S0024609396001828},
  url     = {https://doi.org/10.1112/S0024609396001828}
}

@book{BirS87,
  title     = {Spectral Theory of Self-Adjoint Operators in Hilbert Space},
  author    = {Birman, M. S. and Solomjak, M. Z.},
  year      = {1987},
  publisher = {D. Reidel Publishing Company},
  address   = {Dordrecht},
  series    = {Mathematics and Its Applications (Soviet Series)},
  volume    = {5},
  doi       = {10.1007/978-94-009-4586-9},
  isbn      = {978-94-009-4586-9},
  url       = {https://link.springer.com/book/10.1007/978-94-009-4586-9}
}

@article{Bof10, 
title={Finite element approximation of eigenvalue problems}, 
volume={19}, 
DOI={10.1017/S0962492910000012}, 
journal=ActaNumer, 
author={Boffi, Daniele}, 
year={2010}, 
pages={1-120}
}

@book{BOPRU17,
  editor    = {Benner, Peter and Ohlberger, Mario and Patera, Anthony and Rozza, Gianluigi and Urban, Karsten},
  title     = {Model Reduction of Parametrized Systems},
  publisher = {Springer},
  address   = {Cham},
  year      = {2017},
  series    = {MS\&A (Modeling, Simulation and Applications)},
  volume    = {17},
  isbn      = {978-3-319-58785-1},
  doi       = {10.1007/978-3-319-58786-8}
}

@article{BenR88,
  author  = {Benaroya, Haym and Rehak, Mark},
  title   = {Finite Element Methods in Probabilistic Structural Analysis: A Selective Review},
  journal = {Applied Mechanics Reviews},
  year    = {1988},
  volume  = {41},
  number  = {5},
  pages   = {201--213},
  month   = may,
  doi     = {10.1115/1.3151892},
  url     = {https://doi.org/10.1115/1.3151892}
}

@book{Bra07,
  author    = {Braess, Dietrich},
  title     = {Finite {Elements}: {Theory}, {Fast} {Solvers}, and {Applications} in {Solid} {Mechanics}},
  edition   = {3},
  year      = {2007},
  publisher = {Cambridge University Press},
  address   = {Cambridge},
  isbn      = {9780521705189},
  doi       = {10.1017/CBO9780511618635}
}

@Article{BHWRS23,
  author    = {Brehmer, P. and Herbst, M. F. and Wessel, S. and Rizzi, M. and Stamm, B.},
  journal   = {Phys. Rev. E},
  title     = {Reduced basis surrogates for quantum spin systems based on tensor networks},
  year      = {2023},
  month     = {Aug},
  pages     = {025306},
  volume    = {108},
  doi       = {10.1103/PhysRevE.108.025306},
  issue     = {2},
  numpages  = {14},
  publisher = {American Physical Society},
  url       = {https://link.aps.org/doi/10.1103/PhysRevE.108.025306},
}

@article{BMPetal12,
  author  = {Buffa, Annalisa and Maday, Yvon and Patera, Anthony T. and Prud'homme, Christophe and Turinici, Gabriel},
  title   = {A priori convergence of the greedy algorithm for the parametrized reduced basis method},
  journal = {ESAIM: Mathematical Modelling and Numerical Analysis},
  year    = {2012},
  volume  = {46},
  number  = {3},
  pages   = {595--603},
  publisher = {EDP Sciences},
  doi     = {10.1051/m2an/2011056},
  url     = {https://doi.org/10.1051/m2an/2011056}
}

@article{CJT21,
  author  = {Castrill{\'o}n-Cand{\'a}s, Julio E. and Nobile, Fabio and Tempone, Ra{\'u}l F.},
  title   = {A Hybrid Collocation-Perturbation Approach for {PDE}s with Random Domains},
  journal = {Advances in Computational Mathematics},
  year    = {2021},
  volume  = {47},
  number  = {3},
  eid     = {40},
  doi     = {10.1007/s10444-021-09859-6},
  url     = {https://doi.org/10.1007/s10444-021-09859-6}
}

@article{DEFK23,
  author  = {Duguet, Thomas and Ekstr{\"o}m, Andreas and Furnstahl, Richard J. and K{\"o}nig, Sebastian and Lee, Dean},
  title   = {Colloquium: Eigenvector continuation and projection-based emulators},
  journal = {Reviews of Modern Physics},
  volume  = {96},
  number  = {3},
  pages   = {031002},
  year    = {2024},
  doi     = {10.1103/RevModPhys.96.031002},
}

@article{DEHKO04,
title = {Effective interactions and the nuclear shell-model},
journal = {Progress in Particle and Nuclear Physics},
volume = {53},
number = {2},
pages = {419-500},
year = {2004},
issn = {0146-6410},
doi = {https://doi.org/10.1016/j.ppnp.2004.05.001},
url = {https://www.sciencedirect.com/science/article/pii/S0146641004000912},
author = {D.J. Dean and T. Engeland and M. Hjorth-Jensen and M.P. Kartamyshev and E. Osnes}
}

@article{DoeE24,
   title={On Uncertainty Quantification of Eigenvalues and Eigenspaces with Higher Multiplicity},
   volume={62},
   ISSN={1095-7170},
   url={http://dx.doi.org/10.1137/22M1529324},
   DOI={10.1137/22m1529324},
   number={1},
   journal={SIAM Journal on Numerical Analysis},
   publisher={Society for Industrial & Applied Mathematics (SIAM)},
   author={Dölz, Jürgen and Ebert, David},
   year={2024},
   month=feb, pages={422–451} }

@article{Fan49,
 ISSN = {00278424, 10916490},
 author = {Ky Fan},
 journal = {Proceedings of the National Academy of Sciences of the United States of America},
 number = {11},
 pages = {652--655},
 publisher = {National Academy of Sciences},
 title = {On a Theorem of {Weyl} Concerning Eigenvalues of Linear Transformations. I},
 urldate = {2025-12-01},
 volume = {35},
 year = {1949}
}

@article{Fin83,
  author  = {Fink, J. P. and Rheinboldt, W. C.},
  title   = {On the Error Behavior of the Reduced Basis Technique for Nonlinear Finite Element Approximations},
  journal = {ZAMM -- Journal of Applied Mathematics and Mechanics / Zeitschrift f{\"u}r Angewandte Mathematik und Mechanik},
  year    = {1983},
  volume  = {63},
  number  = {1},
  pages   = {21--28},
  doi     = {10.1002/zamm.19830630105},
  url     = {https://onlinelibrary.wiley.com/doi/10.1002/zamm.19830630105}
}

@article{FHIetal18,
  author  = {Frame, Dillon and He, Rongzheng and Ipsen, Ilse and Lee, Daniel and Lee, Dean and Rrapaj, Ermal},
  title   = {Eigenvector Continuation with Subspace Learning},
  journal = {Physical Review Letters},
  year    = {2018},
  volume  = {121},
  number  = {3},
  pages   = {032501},
  month   = jul,
  doi     = {10.1103/PhysRevLett.121.032501},
  url     = {https://doi.org/10.1103/PhysRevLett.121.032501},
  publisher = {American Physical Society}
}

@article{FMPV16,
  author  = {Fumagalli, Ivan and Manzoni, Andrea and Parolini, Nicola and Verani, Marco},
  title   = {Reduced Basis Approximation and a Posteriori Error Estimates for Parametrized Elliptic Eigenvalue Problems},
  journal = {ESAIM: Mathematical Modelling and Numerical Analysis},
  year    = {2016},
  volume  = {50},
  number  = {6},
  pages   = {1857--1885},
  doi     = {10.1051/m2an/2016009}
}

@misc{GarS24,
  title={On reduced basis methods for eigenvalue problems, and on its coupling with perturbation theory}, 
  author={Louis Garrigue and Benjamin Stamm},
  year={2024},
  eprint={2408.11924},
  archivePrefix={arXiv},
  primaryClass={math-ph},
  note={arXiv:2408.11924},
  url={https://arxiv.org/abs/2408.11924}, 
}

@book{Gou12,
  author    = {Gould, Sydney Henry},
  title     = {Variational Methods for Eigenvalue Problems: An Introduction to the Methods of {Rayleigh}, {Ritz}, {Weinstein}, and {Aronszajn}},
  publisher = {Dover Publications},
  year      = {2012},
  isbn      = {9780486165806}
}

@article{GSH23,
	title = {Stochastic collocation method for computing eigenspaces of parameter-dependent operators},
	volume = {153},
	issn = {0029-599X, 0945-3245},
	url = {https://link.springer.com/10.1007/s00211-022-01339-3},
	doi = {10.1007/s00211-022-01339-3},
	language = {en},
	number = {1},
	urldate = {2024-09-24},
	journal = NumerMath,
	author = {Grubišić, Luka and Saarikangas, Mikael and Hakula, Harri},
	month = jan,
	year = {2023},
	pages = {85--110},
}

@incollection{Haa17,
  author    = {Haasdonk, Bernard},
  title     = {Reduced Basis Methods for Parametrized PDEs --- A Tutorial Introduction for Stationary and Instationary Problems},
  booktitle = {Model Reduction and Approximation: Theory and Algorithms},
  editor    = {Benner, Peter and Cohen, Albert and Ohlberger, Mario and Willcox, Karen},
  publisher = {SIAM},
  location  = {Philadelphia, PA},
  year      = {2017},
  chapter   = {2},
  pages     = {65--136},
  doi       = {10.1137/1.9781611974829.ch2},
  isbn      = {9781611974812}
}

@Article{HSWR22,
  author    = {Herbst, M. F. and Stamm, B. and Wessel, S. and Rizzi, M.},
  journal   = {Phys. Rev. E},
  title     = {Surrogate models for quantum spin systems based on reduced-order modeling},
  year      = {2022},
  month     = {Apr},
  pages     = {045303},
  volume    = {105},
  doi       = {10.1103/PhysRevE.105.045303},
  issue     = {4},
  numpages  = {13},
  publisher = {American Physical Society},
  url       = {https://link.aps.org/doi/10.1103/PhysRevE.105.045303},
}

@Book{HesRS16,
  author = {Hesthaven, J. and Rozza, G. and Stamm, B.},
  title  = {Certified Reduced Basis Methods for Parametrized Partial Differential Equations}, 
  publisher = {Springer},
  year   = {2016},
  month  = jan,
  doi    = {10.1007/978-3-319-22470-1},
  issn   = {978-3-319-22470-1},
}

@book{HubW10,
  author    = {Huba{\v{c}}, Ivan and Wilson, Stephen},
  title     = {Brillouin--Wigner Methods for Many-Body Systems},
  year      = {2010},
  publisher = {Springer Dordrecht},
  series    = {Progress in Theoretical Chemistry and Physics},
  volume    = {21},
  doi       = {10.1007/978-90-481-3373-4},
  isbn      = {978-90-481-3372-7},
  isbn      = {978-90-481-3373-4},
}

@article{HWD17,
  author  = {Horger, Thomas and Wohlmuth, Barbara and Dickopf, Thomas},
  title   = {Simultaneous Reduced Basis Approximation of Parameterized Elliptic Eigenvalue Problems},
  journal = {ESAIM: Mathematical Modelling and Numerical Analysis},
  year    = {2017},
  volume  = {51},
  number  = {2},
  pages   = {443--465},
  doi     = {10.1051/m2an/2016025}
}

@article{KanMMM18,
  author  = {Kangal, F. and Meerbergen, K. and Mengi, E. and Michiels, W.},
  journal = SIAMMatrix,
  title   = {A Subspace Method for Large Scale Eigenvalue Optimization},
  year    = {2017},
  number  = {1},
  pages   = {48-82},
  volume  = {39},
  url     = {https://doi.org/10.1137/16M1070025},
}

@book{Kato76,
  author        = "Kato, Tosio",
  title         = "{Perturbation theory for linear operators; 2nd ed.}",
  publisher     = "Springer",
  address       = "Berlin",
  series        = "Grundlehren der mathematischen Wissenschaften: a series of comprehensive studies in mathematics",
  year          = "1976",
}

@article{KMX13,
  author  = {Knyazev, Andrew and Mehrmann, Volker and Xu, Jinchao},
  title   = {Numerical Solution of {PDE} Eigenvalue Problems},
  journal = {Oberwolfach Reports},
  volume  = {10},
  number  = {4},
  year    = {2013},
  pages   = {3221--3304},
  doi     = {10.4171/OWR/2013/56},
  url     = {https://publications.mfo.de/handle/mfo/3387}
}

@Article{KreV14,
  author   = {Kressner, D. and Vandereycken, B.},
  journal  = SIAMMatrix,
  title    = {Subspace Methods for Computing the Pseudospectral Abscissa and the Stability Radius},
  year     = {2014},
  number   = {1},
  pages    = {292-313},
  volume   = {35},
  doi      = {10.1137/120869432},
  eprint   = {https://doi.org/10.1137/120869432},
  url      = {https://doi.org/10.1137/120869432},
}

@Article{MacMOPR00,
  author   = {Machiels, L. and Maday, Y. and Oliveira, I. B. and Patera, A. and Rovas, D. V.},
  journal  = {Comptes Rendus de l'Académie des Sciences - Series I - Mathematics},
  title    = {Output bounds for reduced-basis approximations of symmetric positive definite eigenvalue problems},
  year     = {2000},
  issn     = {0764-4442},
  number   = {2},
  pages    = {153-158},
  volume   = {331},
  doi      = {https://doi.org/10.1016/S0764-4442(00)00270-6},
  url      = {https://www.sciencedirect.com/science/article/pii/S0764444200002706},
}

@article{McW62,
  author  = {McWeeny, R.},
  title   = {Perturbation Theory for the {Fock}-{Dirac} Density Matrix},
  journal = {Physical Review},
  year    = {1962},
  volume  = {126},
  number  = {3},
  pages   = {1028--1034},
  month   = may,
  doi     = {10.1103/PhysRev.126.1028},
  url     = {https://doi.org/10.1103/PhysRev.126.1028}
}

@article{MSZ25,
  author  = {Manucci, Mattia and Stamm, Benjamin and Zeng, Zhuoyao},
  title   = {Certified model order reduction for parametric {Hermitian} eigenproblems},
  journal = {Mathematics of Computation},
  year    = {2026},
  doi     = {10.1090/mcom/4204},
  url     = {https://doi.org/10.1090/mcom/4204},
  note    = {Published electronically ahead of print},
}

@article{MMMV17,
	Author = {Meerbergen, K. and Mengi, E. and Michiels, W. and Van Beeumen, R.},
	Journal = {IMA J. Numer. Anal.},
	Number = {4},
	Pages ={1831-1863},
	Title = {Computation of Pseudospectral Abscissa for Large-scale Nonlinear Eigenvalue Problems},
	Volume ={37},
	Year = {2017},
    url = {https://doi.org/10.1093/imanum/drw065},
}

@article{MDFGZ22,
  author  = {Melendez, J. A. and Drischler, Christian and Furnstahl, Richard J. and Garcia, A. J. and Zhang, Xilin},
  title   = {Model reduction methods for nuclear emulators},
  journal = {Journal of Physics G: Nuclear and Particle Physics},
  volume  = {49},
  number  = {10},
  pages   = {102001},
  year    = {2022},
  doi     = {10.1088/1361-6471/ac83dd},
}

@article{NooP80,
  author  = {Noor, A. K. and Peters, J. M.},
  title   = {Reduced Basis Technique for Nonlinear Analysis of Structures},
  journal = {AIAA Journal},
  year    = {1980},
  volume  = {18},
  number  = {4},
  pages   = {455--462},
  doi     = {10.2514/3.50778},
  url     = {https://doi.org/10.2514/3.50778}
}

@article{Ovt06I,
  author  = {Ovtchinnikov, Evgueni},
  title   = {Cluster robust error estimates for the {Rayleigh}--{Ritz} approximation {I}: Estimates for invariant subspaces},
  journal = {Linear Algebra and its Applications},
  year    = {2006},
  volume  = {415},
  number  = {1},
  pages   = {167--187},
  month   = may,
  doi     = {10.1016/j.laa.2005.06.040},
  url     = {https://doi.org/10.1016/j.laa.2005.06.040},
  issn    = {0024-3795}
}

@book{Par98,
  address = {Philadelphia},
  author = {Parlett, Beresford N.},
  isbn = {0898714028},
  publisher = {Society for Industrial and Applied Mathematics},
  refid = {37675815},
  title = {The Symmetric Eigenvalue Problem},
  year = 1998,
  url = {https://epubs.siam.org/doi/book/10.1137/1.9781611971163},
}

@article{PorL87,
  author  = {Porsching, T. A. and Lee, M. Lin},
  title   = {The Reduced Basis Method for Initial Value Problems},
  journal = {SIAM Journal on Numerical Analysis},
  year    = {1987},
  volume  = {24},
  number  = {6},
  pages   = {1277--1287},
  month   = dec,
  doi     = {10.1137/0724083},
  url     = {https://doi.org/10.1137/0724083}
}

@Article{PraB24,
  author   = {Pradovera, D. and Borghi, A.},
  journal  = JComputPhy,
  title    = {Match-based solution of general parametric eigenvalue problems},
  year     = {2024},
  issn     = {0021-9991},
  pages    = {113384},
  volume   = {519},
  doi      = {10.1016/j.jcp.2024.113384},
  url      = {https://www.sciencedirect.com/science/article/pii/S0021999124006326},
}

@Article{PruRVP02,
  author    = {Prud'homme, C. and Rovas, D. V. and Veroy, K. and Patera, A. T.},
  journal   = E2MNA,
  title     = {A mathematical and computational framework for reliable real-time solution of parametrized partial differential equations},
  year      = {2002},
  number    = {5},
  pages     = {747--771},
  volume    = {36},
  doi       = {10.1051/m2an:2002035},
  language  = {en},
  publisher = {EDP-Sciences},
  url       = {http://www.numdam.org/articles/10.1051/m2an:2002035/},
  zbl       = {1024.65104},
}

@techreport{Rel54,
  author      = {Rellich, Franz},
  title       = {Perturbation Theory of Eigenvalue Problems},
  institution = {Institute of Mathematical Sciences, New York University},
  address     = {New York},
  year        = {1954},
  month       = aug,
  number      = {IMM-NYU 212},
  type        = {Technical Report},
  note        = {Research in the Field of Perturbation Theory and Linear Operators, Technical Report No. 1; prepared under sponsorship of the Army Office of Ordnance Research},
  url         = {https://archive.org/details/perturbationtheo00rell},
}

@article{Rhe93,
  author  = {Rheinboldt, Werner C.},
  title   = {On the Theory and Error Estimation of the Reduced Basis Method for Multi-Parameter Problems},
  journal = {Nonlinear Analysis: Theory, Methods \& Applications},
  year    = {1993},
  volume  = {21},
  number  = {11},
  pages   = {849--858},
  month   = dec,
  doi     = {10.1016/0362-546X(93)90050-3},
  url     = {https://doi.org/10.1016/0362-546X(93)90050-3}
}

@book{SVR08,
  editor    = {Schilders, Wilhelmus H. A. and van der Vorst, Henk A. and Rommes, Joost},
  title     = {Model Order Reduction: Theory, Research Aspects and Applications},
  publisher = {Springer},
  address   = {Berlin, Heidelberg},
  year      = {2008},
  series    = {Mathematics in Industry},
  volume    = {13},
  isbn      = {978-3-540-78840-9},
  doi       = {10.1007/978-3-540-78841-6}
}

@Article{SirK16,
  author  = {Sirkovic, P. and Kressner, D.},
  journal = SIAMMatrix,
  title   = {Subspace Acceleration for Large-Scale Parameter-Dependent {Hermitian} Eigenproblems},
  year    = {2016},
  number  = {2},
  pages   = {1323--1353},
  volume  = {37},
  url = {https://doi.org/10.1137/15M1017181},
}

@article{ShiA72,
  author  = {Shinozuka, Masanobu and Astill, Clifford J.},
  title   = {Random Eigenvalue Problems in Structural Analysis},
  journal = {AIAA Journal},
  year    = {1972},
  volume  = {10},
  number  = {4},
  pages   = {456--462},
  doi     = {10.2514/3.50119},
  url     = {https://doi.org/10.2514/3.50119}
}

@article{Sun91,
  author  = {Sun, Ji-guang},
  title   = {Eigenvalues of {Rayleigh} quotient matrices},
  journal = {Numerische Mathematik},
  year    = {1991},
  volume  = {59},
  number  = {6},
  pages   = {603--614},
  month   = dec,
  doi     = {10.1007/BF01385798},
  url     = {https://doi.org/10.1007/BF01385798}
}

@article{Wed83,
    author = "Wedin, Per Ake",
    title = "{On Angles between subspaces of a finite dimensional inner product space}",
    journal = "Lect. Notes Math.",
    volume = "973",
    pages = "263--285",
    year = "1983",
    url = {https://link.springer.com/content/pdf/10.1007/BFb0062089.pdf}
}

@book{Wei74,
  author    = {Weinberger, H. F.},
  title     = {Variational Methods for Eigenvalue Approximation},
  series    = {CBMS-NSF Regional Conference Series in Applied Mathematics},
  volume    = {15},
  publisher = {SIAM},
  year      = {1974},
  doi       = {10.1137/1.9781611970531},
  isbn      = {9780898710120},
  eisbn     = {9781611970531}
}

@book{Tao12,
  author    = {Tao, Terence},
  title     = {Topics in Random Matrix Theory},
  publisher = {American Mathematical Society},
  year      = {2012},
  series    = {Graduate studies in mathematics ; v. 132},
  isbn      = {978-0-8218-7430-1}
}

@book{SRFB2004,
title = "Quantum Magnetism",
editor = "U Schollw{\"o}ck and J Richter and DJJ Farnell and RF Bishop",
year = "2004",
doi = "10.1007/b96825",
language = "English",
isbn = "3-540-21422-4",
series = "Lecture Notes in Physics",
publisher = "Springer Nature",
address = "United States",
}

@STRING{ActaNumer = {Acta Numer.}}

@STRING{CompApplMath = {J. Comput. Appl. Math.}}

@STRING{Computing = {Computing}}

@STRING{E2MNA = {ESAIM: Math. Model. Numer. Anal.}}

@STRING{JComputPhy = {J. Comput. Phys.}}

@STRING{NumerMath = {Numer. Math.}}

@STRING{SIAM = {Society for Industrial and Applied Mathematics}}

@STRING{SIAMMatrix = {{SIAM} J. Matrix Anal. Appl.}}

@STRING{SIAMReview = {{SIAM} Rev.}}

@STRING{Springer = {Springer-Verlag}}

\end{document}